\definecolor{myblue}{rgb}{0.1, 0.1, 0.5}
\def\qed{\hfill $\Box$}
\renewenvironment{proof}{\vspace{.01cm}   \noindent{\bf Proof }}{\qed \vspace{.1cm}}
\newtheoremstyle{mythmstyle}
{1em} 
{0.1em} 
{\itshape} 
{} 
{\bfseries} 
{} 
{1em} 
{} 
\theoremstyle{mythmstyle}
\newtheorem{theorem}{Theorem}[section]
\newtheorem{corollary}{Corollary}[section]
\newtheorem{lemma}{Lemma}[section]
\newtheorem{proposition}{Proposition}[section]
\newtheorem{assumption}{Assumption}[section]
\newtheorem{remark}{Remark}[section]
\makeatletter \@addtoreset{equation}{section}
\begin{document}
\pagenumbering{gobble} 
\title{\LARGE {\bf 
Stability and error analysis of an implicit Milstein finite difference scheme for a two-dimensional Zakai SPDE}}
\author{Christoph Reisinger\footnote{Mathematical Institute, University of Oxford, Andrew Wiles Building, Woodstock Road, Oxford, OX2 6GG, UK, E-mail: christoph.reisinger@maths.ox.ac.uk}
\ and Zhenru Wang\footnote{Mathematical Institute, University of Oxford, Andrew Wiles Building, Woodstock Road, Oxford, OX2 6GG, UK, E-mail: zhenru.wang@maths.ox.ac.uk}    
 }

\date{}
\maketitle
\setcounter{secnumdepth}{3}
\setcounter{tocdepth}{2}
\pagenumbering{arabic} 

\begin{abstract}
In this article, we propose an implicit finite difference scheme for a two-dimensional parabolic stochastic partial differential equation (SPDE) of Zakai type. 
The scheme is based on a Milstein approximation to the stochastic integral and an alternating direction implicit (ADI) discretisation of the elliptic term.
We prove its mean-square stability and convergence in $L_2$ of first order in time and second order in space, by Fourier analysis, in the presence of Dirac initial data.
Numerical tests confirm these findings empirically.
\end{abstract}

\noindent
{\bf Key words:} stochastic partial differential equations, Milstein scheme, stochastic finite differences, splitting schemes, mean-square stability, $L_2$-convergence

\section{Introduction}
The analysis and numerical computation of Zakai equations and other types of stochastic partial differential equation (SPDE) have been extensively studied in recent years. A general form of Zakai equation (see \cite{bain2009fundamentals,gobet2006discretization}) is given by
\begin{equation}\label{eq_zakai}
\mathrm{d}v(t,x) = \bigg(\frac{1}{2}\sum_{i,j=1}^d\frac{\partial^2}{\partial x_i\partial x_j}\big[a_{ij}(x)v(t,x)\big] - \sum_{i=1}^d\frac{\partial}{\partial x_i}\big[b_i(x)v(t,x)\big]\bigg)\,\mathrm{d}t- \nabla\big[\gamma(x)v(t,x)\big]\,\mathrm{d}M_t,
\end{equation}
where $M$ is an $m$-dimensional standard Brownian motion, $a$ is a $d\times d$ matrix-valued function, $b$ is a $\mathbb{R}^d$-valued function, and $\gamma$ is a $d\times m$ matrix-valued function.
This Zakai equation arises from a nonlinear filtering problem: given an $m$-dimensional observation process $M$ and a $d$-dimensional signal process $Z$, the goal is to estimate the conditional distribution of $Z$ given $M$. If $Z$ satisfies
\begin{equation}\label{eq_2dfiltering}
Z_t = Z_0 + \int_0^t\beta(Z_s)\,\mathrm{d}s + \int_0^t\sigma(Z_s)\,\mathrm{d}B_s + \int_0^t\gamma(Z_s)\,\mathrm{d}M_s,
\end{equation}
where $B$ is a $d$-dimensional standard Brownian motion independent of $M$, $\sigma$ is a $d\times d$-matrix valued function, and $\beta$ is a $\mathbb{R}^d$-valued function, then the conditional distribution function of $Z$ given $M$ has a density $v$ in $L_2$, and it is proved (Theorem 3.1 in \cite{kurtz1999particle})
that under appropriate conditions, $v$ satisfies \eqref{eq_zakai} in a weak sense with
\[
a = \sigma\sigma^\top + \gamma\gamma^\top,\qquad b = \beta.
\]

Moreover, the solution $v$ to (\ref{eq_zakai}) can be interpreted as the density -- if it exists -- of the limit empirical measure $\nu_t = \lim_{N\rightarrow \infty} N^{-1} 
\sum_{i=1}^N \delta_{Z_t^i}$ for
\begin{equation}\label{eq_particle}
Z_t^i = Z_0 + \int_0^t\beta(Z^i_s)\,\mathrm{d}s + \int_0^t\sigma(Z^i_s)\,\mathrm{d}B^i_s + \int_0^t\gamma(Z^i_s)\,\mathrm{d}M_s,
\end{equation}
where $B^i$ and $N$ are independent Brownian motions, independent of $M$, and the rest as above.

There are two major approaches to the numerical approximation of the Zakai equation. One is 
by simulating the particle system (\ref{eq_particle}) with a Monte Carlo method, for instance as in \cite{crisan1999particle,crisan2003exact,crisan2010approximate,gobet2006discretization}. The other approach is to directly solve the Zakai SPDE by spatial approximation methods
and time stepping schemes, coupled again with Monte Carlo sampling, which is the subject of this paper.

For this second class of methods, several schemes have been developed in earlier works including finite differences \cite{gyongy1997implicit, gyongy1998lattice, gyongy1999lattice, davie2001convergence}, finite elements \cite{walsh2005finite, kruse2014optimal}, and stochastic Taylor schemes \cite{jentzen2009numerical, jentzen2010taylor},
but these were restricted to classes of SPDEs not including Zakai equations of the type (\ref{eq_zakai}).

{
More recently, methods have been developed and analysed for parabolic SPDEs of the generic form
\begin{equation}\label{eq_SPDEaddnoise}
\mathrm{d}v = \mathcal{L}v\,\mathrm{d}t + G(v)\,\mathrm{d}M_t,
\end{equation}
where $\mathcal{L}$ is a second order elliptic differential operator, and $G$ is a functional mapping $v$ onto a linear operator from martingales $M$
into a suitable function space.
}

{
Under suitable regularity, for equations of type (\ref{eq_SPDEaddnoise}), mean-square convergence of order 1/2 is shown for an Euler semi-discretisation in \cite{lang2010mean} for square-integrable (not necessarily continuous), infinite-dimensional  martingale drivers.
In contrast, \cite{barth2013lp} allow only for continuous martingales but prove convergence of higher order in space and up to 1 in time, in $L^p$ and almost surely, for a Milstein scheme and spatial Galerkin approximation of sufficiently high order; this is extended to advection-diffusion equations with possibly discontinuous martingales in \cite{barth2012milstein}.

Giles and Reisinger \cite{ref2} use an explicit Milstein finite difference approximation to the solution of the following one-dimensional SPDE, a special case of \eqref{eq_zakai} for $d=1$ and constant coefficients,
\begin{equation}\label{eq_1dspde}
\mathrm{d}v = -\mu\frac{\partial v}{\partial x}\,\mathrm{d}t + \frac{1}{2}\frac{\partial^2 v}{\partial x^2}\,\mathrm{d}t - \sqrt{\rho}\frac{\partial v}{\partial x}\,\mathrm{d}M_t,\qquad (t,x)\in(0,T)\times\mathbb{R},
\end{equation}
where $T>0$, $M$ is a standard Brownian motion, and $\mu$ and $0\le \rho<1$ are real-valued parameters. This is extended in \cite{ref1} to an approximation of
 \eqref{eq_1dspde} with an implicit method on the basis of the $\sigma$-$\theta$ time-stepping scheme, where the finite variation parts of the double stochastic integral are taken implicit. This is further applied in \cite{reisinger2017analysis} to Multi-index Monte Carlo estimation of expectations of a functional of the solution.

A finite difference scheme for 
a filtered jump-diffusion process resulting in a stochastic integro-differential equation is studied in \cite{dareiotis2016finite}, where convergence of order 1 in space and 1/2 in time, in $L_2$ and $L_\infty$ in space, is proven for an Euler time stepping scheme.

The theoretical results in this paper are an extension from those in \cite{ref2} to the multi-dimensional case.
They are more specific than those in \cite{dareiotis2016finite} in that we analyse only the case of constant coefficient local SPDEs. In contrast to \cite{barth2012milstein, barth2013multilevel}, we consider only finite-dimensional Brownian motions, as is relevant in our applications. But we specifically include the case of Dirac initial data and extend the results to a practically attractive, semi-implicit alternating direction implicit factorisation in the context of the Milstein scheme.

We want to allow for Dirac initial data because they correspond to the natural situation where all particles in (\ref{eq_particle}) start from the same initial position,
or a filtering problem with known current state $Z_0$ in (\ref{eq_2dfiltering}). 
}

Specifically, we study first the two-dimensional stochastic partial differential equation (SPDE)
\begin{equation}\label{eq_SPDE}
\mathrm{d}v = -\mu_x\frac{\partial v}{\partial x}\,\mathrm{d}t -\mu_y\frac{\partial v}{\partial y}\,\mathrm{d}t + \frac{1}{2}\bigg(\frac{\partial^2 v}{\partial x^2} + 2\sqrt{\rho_x\rho_y}\rho_{xy}\frac{\partial^2v}{\partial x\partial y} + \frac{\partial^2 v}{\partial y^2} \bigg)\,\mathrm{d}t - \sqrt{\rho_x}\frac{\partial v}{\partial x}\,\mathrm{d}M_t^x - \sqrt{\rho_y}\frac{\partial v}{\partial y}\,\mathrm{d}M_t^y,
\end{equation}
for $x,y\in\mathbb{R},\ 0<t\leq T$, where $\mu_x,\mu_y$ and $0\leq\rho_x,\rho_y< 1$, $-1\leq\rho_{xy}\leq 1$ are real-valued parameters, subject to the Dirac initial data
\begin{equation}\label{eq_2dDiracInitial}
v(0,x,y) = \delta(x-x_0)\delta(y-y_0),
\end{equation}
where $x_0$ and $y_0$ are given. It is derived from the special case where the signal processes $Z = (X,Y)'$ satisfies \eqref{eq_2dfiltering} with 
\[
\beta = \begin{bmatrix}
\mu_x \\ \mu_y
\end{bmatrix},\quad
\sigma = \begin{bmatrix}
\sqrt{1-\rho_x} & 0 \\ 0 & \sqrt{1-\rho_y}
\end{bmatrix},\quad
\gamma = \begin{bmatrix}
\sqrt{\rho_x} & 0 \\ \sqrt{\rho_y}\rho_{xy} & \sqrt{\rho_y(1-\rho_{xy}^2)}
\end{bmatrix}.
\]


A classical result states that, for a class of SPDEs including \eqref{eq_SPDE}, with initial condition in $H^0$, there exists a unique solution $v\in L_2(\Omega\times(0,T), \mathcal{F}, H^0(\mathbb{R}))$ \cite{krylov1981stochastic}. This does not include Dirac initial data \eqref{eq_2dDiracInitial}, but in fact, the solution to \eqref{eq_SPDE} and \eqref{eq_2dDiracInitial} is analytically known to be the smooth (in $x$ and $y$) function
\begin{equation}\label{eq_2dTheoreticalResult}
v(T,x,y) = \frac{\exp\Big(-\frac{\big(x-x_0-\mu_x T-\sqrt{\rho_x}M_T^x\big)^2}{2(1-\rho_x)T}-\frac{\big(y-y_0-\mu_y T-\sqrt{\rho_y}M_T^y\big)^2}{2(1-\rho_y)T}\Big)}{2\pi\sqrt{(1-\rho_x)(1-\rho_y)}\,T}\,.
\end{equation}
The availability of a closed-form solution in this case helps us check the validity of our numerical scheme and its convergence rate,
although the scheme itself is more widely applicable. 

For the SPDE \eqref{eq_SPDE}, we consider both explicit and implicit Milstein schemes. We study the mean-square stability, and the strong convergence of the second moment. This can give us an error bound for the expected error. The advantage over the simpler Euler scheme is that the strong convergence order is improved from 1/2 to 1.
As expected, we find that the explicit scheme is stable in the mean-square sense only under a strong CFL-type condition on the timestep, $k\le C h^2$ for timestep $k$, mesh size
$h$, and a constant $C$,
while the
implicit scheme is mean-square stable
under the very mild and 
somewhat unusual CFL condition $k \le C |\log h|^{-1}$
(provided also some constraints on $\rho_x,\rho_y,\rho_{xy}$).

We therefore focus on the implicit scheme, for which we prove first order convergence in the timestep and second order in the spatial mesh size.
The analysis is made more difficult by the Dirac initial datum compared to, say, $L_2$ initial data.
We adapt the approach used in \cite{ref3} for the heat equation by studying the convergence for different wave number regions in Fourier space and then assemble the contributions to the error by the inverse transform.

Furthermore, we use an Alternating Direction Implicit (ADI) scheme to approximately factorise the discretisation matrix for the implicit elliptic part in \eqref{eq_SPDE}. This concept is well established for PDEs (see, e.g., \cite{ref6, craig1988alternating, hundsdorfer2013numerical}). 
{
It is well known that in the multi-dimensional case standard implicit schemes result in sparse banded linear systems, which cannot be solved by direct elimination in a computational cost which scales linearly with the number of unknowns like in the one-dimensional, tridiagonal case. An alternative to advanced iterative linear solvers such as multigrid methods is to reduce the large sparse linear system approximately to a sequence of tridiagonal linear systems, which are computationally easier to handle, by ADI factorisation. To our knowledge, the present work is the first application of ADI to SPDEs.
We show that the ADI approximation is also mean-square stable under the same conditions as the original implicit scheme and has the same convergence order.

We note that published analysis of ADI schemes for parabolic PDEs in the presence of mixed spatial derivative terms is currently restricted to constant coefficients (through the use of von Neumann stability analysis; see e.g.\ \cite{wyns2016convergence}). Notwithstanding this, the empirical evidence overwhelmingly suggests that the conclusions drawn there extend to most cases of variable coefficients.}

The scheme we propose applies similarly beyond constant coefficients. We give the natural extension to the SPDE (\ref{eq_zakai}). { 
In that case, additional iterated stochastic integrals (the \emph{L{\'e}vy area}) appear in the Milstein approximation. The efficient, accurate simulation has been studied in the context of SDEs in \cite{kloeden1992approximation, gaines1994random} and invariably leads to relatively complicated schemes. As the computational effort in the context of the SPDE (\ref{eq_zakai}) is dominated by the matrix computations from the finite difference scheme, we perform a simple approximation of the stochastic integrals $\int_t^{t+k} (W_s-W_t) \, {\rm d}B_s$, for correlated Brownian motions $W$ and $B$, by simple Euler integration with step $k^2$. This is sufficiently accurate not to spoil the first order convergence and does not increase the complexity order.}

As a specific application, we approximate the equation
\begin{eqnarray}
\nonumber
\mathrm{d}u &=& \bigg[\kappa_1 u - \Big(r_1 - \frac{1}{2}y - \xi_1\rho_3\rho_{1,1}\rho_{2,1}\Big) \frac{\partial u}{\partial x} - \Big( \kappa_1(\theta_1-y) - \xi_1^2 \Big)\frac{\partial u}{\partial y} + \frac{1}{2}y \frac{\partial^2 u}{\partial x^2} + \xi_1\rho_3\rho_{1,1}\rho_{2,1}y\frac{\partial^2 u}{\partial x\partial y} \\
&&\quad + \frac{\xi_1^2}{2}y\frac{\partial^2 u}{\partial y^2} \bigg]\,\mathrm{d}t - \rho_{1,1}\sqrt{y}\frac{\partial u}{\partial x}\,\mathrm{d}W_t - \xi_1\rho_{2,1}\frac{\partial}{\partial y}(\sqrt{y}u)\,\mathrm{d}B_t,
\label{svspde}
\end{eqnarray}
taken from \cite{hambly2017stochastic}, with the scheme presented in this paper. Although our analysis (based on Fourier transforms) does not directly apply in this case, the scheme preserves first order convergence in time and second order convergence in space in our numerical tests.

{
Summarising, the novel contributions of this paper are as follows. We
\begin{itemize}
\item
give a rigorous stability and error analysis in $L_2$ for a Milstein finite difference scheme for the SPDE \eqref{eq_SPDE}, deriving sharp leading order error terms;
\item
derive pointwise errors for Dirac initial data, which reveal a mild instability for large implicit timesteps and small spatial mesh sizes in this case, not seen in previous studies for $L_2$ data;
\item
extend the analysis to an alternating direction implicit (ADI) factorisation, which, to our knowledge, is the first application of an ADI scheme to stochastic PDEs;
\item
propose a modification for the more general equation \eqref{eq_zakai} through sub-simulation of the L{\'e}vy area, which is empirically shown to be of first order.
\end{itemize}

}

The rest of this article is structured as follows. We define the approximation schemes in Section~\ref{sec_approxandmainresults}. Then we analyse the mean-square stability and $L_2$-convergence in Sections \ref{sec_2dMeanSquareStability} and \ref{sec_2dMeanSquareConvergence}  in the constant coefficient case of (\ref{eq_SPDE}).
Section \ref{sec_2dNumerical} shows numerical experiments confirming the above findings. 
{
Section \ref{sec:general} extends the scheme to variable coefficients as in (\ref{eq_zakai}) and 
presents tests for the example (\ref{svspde}).} Section
\ref{sec_Conclusion} offers conclusions and directions for further research.

\section{Approximation and main results}\label{sec_approxandmainresults}
\subsection{Semi-implicit Milstein finite difference scheme}\label{sec_2dImplicitMilsteinScheme}
First, we introduce the numerical scheme to the SPDE \eqref{eq_SPDE}, repeated here for convenience,
\[
\mathrm{d}v = -\mu_x\frac{\partial v}{\partial x}\,\mathrm{d}t -\mu_y\frac{\partial v}{\partial y}\,\mathrm{d}t + \frac{1}{2}\bigg(\frac{\partial^2 v}{\partial x^2} + 2\sqrt{\rho_x\rho_y}\rho_{xy}\frac{\partial^2v}{\partial x\partial y} + \frac{\partial^2 v}{\partial y^2} \bigg)\,\mathrm{d}t - \sqrt{\rho_x}\frac{\partial v}{\partial x}\,\mathrm{d}M_t^x - \sqrt{\rho_y}\frac{\partial v}{\partial y}\,\mathrm{d}M_t^y,
\]
with Dirac initial $v(0,x,y) = \delta(x-x_0)\delta(y-y_0)$.
We use a spatial grid with uniform spacing $h_x,\,h_y>0$, 
{
and, for $T>0$ fixed, $N$ time steps of size $k = T/N$.}
Let $V_{i,j}^{n}$ be the approximation to $v(nk,ih_x,jh_y)$, $n=1,\ldots,N$, $i,j\in\mathbb{Z}$, where $i_0\coloneqq[x_0/h_x],\ j_0\coloneqq[y_0/h_y]$, the closest integers to $x_0/h_x$ and $y_0/h_y$.
We approximate $v(0,x,y)$ by
\begin{equation}\label{eq_2dDiracInitialApprox}
V_{i,j}^0 = h_x^{-1}h_y^{-1}\delta_{(i_0,\,j_0)} = 
\begin{cases}
h_x^{-1}h_y^{-1},\quad &i=i_0,\ j=j_0,\\
0,&\text{otherwise}.
\end{cases}
\end{equation}
To improve the accuracy of the approximation of $v$ in the present case of Dirac initial data, we subsequently choose $h_x$ and $h_y$ such that 
$x_0/h_x$ and $y_0/h_y$ are integers and therefore
$x_0$ and $y_0$ are on the grid.

Extending the implicit Euler scheme in \cite{ref1} for the 1D case, it is natural to take the drift term
$$-\mu_x\frac{\partial v}{\partial x} -\mu_y\frac{\partial v}{\partial y} + \frac{1}{2}\bigg(\frac{\partial^2 v}{\partial x^2} + 2\sqrt{\rho_x\rho_y}\rho_{xy}\frac{\partial^2v}{\partial x\partial y} + \frac{\partial^2 v}{\partial y^2} \bigg)$$
implicit, and the terms driven by $M^x$ and $M^y$
explicit. {
We will prove later that in this way we obtain better stability (compare Proposition \ref{prop_ExplicitStability} to Theorem \ref{thm_2dstability}).}
For computational simplicity, in the following, we take the mixed derivative term therein explicit. This is also in preparation for the ADI splitting schemes we will study later.

Using such a semi-implicit Euler scheme, the system of SPDE \eqref{eq_SPDE} can be approximated by
\begin{align*}
V^{n+1} &= V^n - \frac{\mu_x k}{2h_x}D_xV^{n+1} - \frac{\mu_y k}{2h_y}D_yV^{n+1} + \frac{k}{2h_x^2}D_{xx}V^{n+1} + \frac{k}{2h_y^2}D_{yy}V^{n+1}\\
&\quad + \sqrt{\rho_x\rho_y}\rho_{xy}\frac{k}{4h_xh_y}D_{xy}V^n - \frac{\sqrt{\rho_x k}Z_{n,x}}{2h_x}D_xV^{n} - \frac{\sqrt{\rho_y k}\widetilde{Z}_{n,y}}{2h_y}D_yV^{n},
\end{align*}
where
\begin{align*}
(D_xV)_{i,j} &= V_{i+1,j}-V_{i-1,j},\qquad\qquad\qquad\qquad\qquad\qquad\quad \ \;
(D_yV)_{i,j} = V_{i,j+1}-V_{i,j-1},\\
(D_{xx}V)_{i,j} &= V_{i+1,j}-2V_{i,j}+V_{i-1,j},\qquad\qquad\qquad\qquad\qquad
(D_{yy}V)_{i,j} = V_{i,j+1}-2V_{i,j}+V_{i,j-1},\\
(D_{xy}V)_{i,j} &= V_{i+1,j+1}-V_{i-1,j+1}-V_{i+1,j-1}+V_{i-1,j-1},
\end{align*}
and $\widetilde{Z}_n^y = \rho_{xy}Z_n^x + \sqrt{1-\rho_{xy}^2}Z_n^y$, with $Z_n^x,Z_n^y\sim N(0,1)$ being independent normal random variables.
To achieve a higher order of convergence, we introduce the Milstein scheme. Integrating \eqref{eq_SPDE} over the time interval $[nk, (n+1)k]$, 
\begin{equation}\label{eq_integration}
\begin{aligned}
\!\!\!\!\!\!\!\!v\big(nk+k,x,y\big) = v(nk,x,y) &+ \!\int_{nk}^{nk+k}\!\!\bigg(\!\!-\mu_x\frac{\partial v}{\partial x} -\mu_y\frac{\partial v}{\partial y} + \frac{1}{2}\frac{\partial^2 v}{\partial x^2} + \sqrt{\rho_x\rho_y}\rho_{xy}\frac{\partial^2v}{\partial x\partial y} + \frac{1}{2}\frac{\partial^2 v}{\partial y^2}\bigg)\mathrm{d}s\\
& - \int_{nk}^{nk+k}\sqrt{\rho_x}\frac{\partial v}{\partial x}\,\mathrm{d}M^x_s -\int_{nk}^{nk+k}\sqrt{\rho_y}\frac{\partial v}{\partial y}\,\mathrm{d}M^y_s.
\end{aligned}
\end{equation}

{
In the Euler schemes, we approximate all integrands by their value at time $nk$ or $(n+1)k$, which is a zero-order expansion in time.
By contrast, in the Milstein scheme, we use a first-order expansion for the stochastic integrals, such that}
we make the approximation $v(s,x,y)\approx v(nk,x,y)$ for $nk<s<(n+1)k$ in the first integral and
\[
v(s,x,y)\approx v(nk,x,y) - \sqrt{\rho_x}\frac{\partial v}{\partial x}(nk,x,y)(M_s^x - M_{nk}^x) -\sqrt{\rho_y}\frac{\partial v}{\partial y}(nk,x,y)(M_s^y - M_{nk}^y)
\]
in the second and third. We denote $nk$ as $t$, and it follows
\begin{eqnarray*}
&&- \int_t^{t+k}\sqrt{\rho_x}\frac{\partial v}{\partial x}(s,x,y)\,\mathrm{d}M^x_s -\int_t^{t+k}\sqrt{\rho_y}\frac{\partial v}{\partial y}(s,x,y)\,\mathrm{d}M^y_s\\
&& \qquad\qquad \approx -\sqrt{\rho_x}\frac{\partial v}{\partial x}(t,x,y)\Delta M_n^x - \sqrt{\rho_y}\frac{\partial v}{\partial y}(t,x,y)\Delta M_n^y\\
&&\qquad\qquad\qquad + \; \rho_x\frac{\partial^2v}{\partial x^2}(t,x,y)\int_t^{t+k}(M_s^x-M_t^x)\,\mathrm{d}M_s^x\ + \rho_y\frac{\partial^2v}{\partial y^2}(t,x,y)\int_t^{t+k}(M_s^y-M_t^y)\,\mathrm{d}M_s^y\\
&&\qquad\qquad\qquad +\; \sqrt{\rho_x\rho_y}\frac{\partial^2 v}{\partial x\partial y}(t,x,y)\bigg( \int_t^{t+k}(M_s^x-M_t^x)\,\mathrm{d}M_s^y + \int_t^{t+k}(M_s^y-M_t^y)\,\mathrm{d}M_s^x\bigg),
\end{eqnarray*}
where 
\[
\Delta M_n^x= M_{t+k}^x - M_t^x = \sqrt{k}Z_n^x,\quad
\Delta M_n^y= M_{t+k}^y - M_t^y =  \sqrt{k}\widetilde{Z}_n^y.
\]
From standard It\^{o} calculus, we have
\begin{align*}
&\int_t^{t+k}(M_s^x-M_t^x)\,\mathrm{d}M_s^x = \frac{1}{2}\Big((\Delta M_n^x)^2-k\Big),\qquad\quad \int_t^{t+k}(M_s^y-M_t^y)\,\mathrm{d}M_s^y = \frac{1}{2}\Big((\Delta M_n^y)^2-k\Big),\\
&\int_t^{t+k}(M_s^x-M_t^x)\,\mathrm{d}M_s^y + \int_t^{t+k}(M_s^y-M_t^y)\,\mathrm{d}M_s^x = \Delta M_n^x \Delta M_n^y - \rho_{xy}k.
\end{align*}

We see that the mixed-derivative terms cancel, and we derive the implicit Milstein scheme as follows,
\begin{eqnarray}\label{eq_2DimplicitMilstein}
&&\!\!\!\!\bigg(\!I + \frac{\mu_x k}{2h_x}D_x + \frac{\mu_y k}{2h_y}D_y - \frac{k}{2h_x^2}D_{xx} - \frac{k}{2h_y^2}D_{yy}\bigg)V^{n+1}\\
\nonumber
&=&\!\!\!\!\!\bigg(I \!- \!\frac{\sqrt{\rho_x k}Z_{n,x}}{2h_x}D_x \!-\! \frac{\sqrt{\rho_y k}\widetilde{Z}_{n,y}}{2h_y}D_y \!+\! \frac{\rho_xk(Z_{n,x}^2\!\!-\!1)}{8h_x^2}D_x^2 \!+\! \frac{\rho_yk(\widetilde{Z}_{n,y}^2\!\!-\!1)}{8h_y^2}D_y^2 \!+\!
\frac{\sqrt{\rho_x\rho_y}kZ_{n,x}\widetilde{Z}_{n,y}}{4h_xh_y}D_{xy}\!\bigg)\!V^n\!.
\end{eqnarray}



To facilitate its implementation, we combine the scheme with an Alternating Direction Implicit (ADI) factorisation, which has been introduced in \cite{ref6}
for parabolic PDEs to approximately factorise the system matrix by matrices which correspond to derivatives in individual directions and which can thus more easily be inverted,
while the consistency order is maintained.
Applying this principle to the non-Brownian, implicit terms of the SPDE, we obtain
\begin{eqnarray}\label{eq_ADIdifference}
&&\!\!\!\!\bigg(I + \frac{\mu_x k}{2h_x}D_x - \frac{k}{2h_x^2}D_{xx}\bigg)\bigg(I + \frac{\mu_y k}{2h_y}D_y - \frac{k}{2h_y^2}D_{yy}\bigg)V^{n+1}\\
\nonumber
&=&\!\!\!\!\!\bigg(\! I \!-\! \frac{\sqrt{\rho_x k}Z_{n,x}}{2h_x}D_x \!-\! \frac{\sqrt{\rho_y k}\widetilde{Z}_{n,y}}{2h_y}D_y \!+\! \frac{\rho_xk(Z_{n,x}^2\!\!-\!1)}{8h_x^2}D_x^2 \!+\! \frac{\rho_yk(\widetilde{Z}_{n,y}^2\!\!-\!1)}{8h_y^2}D_y^2 \!+\!
\frac{\sqrt{\rho_x\rho_y}kZ_{n,x}\widetilde{Z}_{n,y}}{4h_xh_y}D_{xy}\!\bigg)\!V^n\!.
\end{eqnarray}
Note that there is no substantial benefit in considering second order accurate splitting schemes (such as Craig--Sneyd \cite{craig1988alternating} or Hundsdorfer--Verwer \cite{hundsdorfer2013numerical}) as the overall order is limited to 1 by the Milstein approximation to the stochastic integral.

We approximate the second derivative on the right-hand side with $D_x^2$ and $D_y^2$, but the results for $D_{xx}$ and $D_{yy}$ would be similar.

We can also use the explicit Milstein finite difference scheme to approximate
\begin{equation}\label{eq_2DexplicitMilstein}
\begin{aligned}
V^{n+1}=& \bigg( I -\frac{\mu_x k + \sqrt{\rho_x k}Z_{n,x}}{2h_x}D_x - \frac{\mu_y k + \sqrt{\rho_y k}\widetilde{Z}_{n,y}}{2h_y}D_y + \frac{k}{2h_x^2}D_{xx} + \frac{k}{2h_y^2}D_{yy}\\
&\quad + \frac{\rho_xk(Z_{n,x}^2-1)}{8h_x^2}D_x^2 + \frac{\rho_yk(\widetilde{Z}_{n,y}^2-1)}{8h_y^2}D_y^2 +
\frac{\sqrt{\rho_x\rho_y}kZ_{n,x}\widetilde{Z}_{n,y}}{4h_xh_y}D_{xy}\bigg)V^n,
\end{aligned}
\end{equation}
but, as we will see, this scheme is stable only under a restrictive condition on the timestep.


\subsection{
Main convergence results}
The following theorems describe the mean-square stability and convergence of the implicit finite difference scheme \eqref{eq_2DimplicitMilstein} and the ADI scheme \eqref{eq_ADIdifference}.
We make the following assumption: 
\begin{assumption}\label{ass-corr}
Let $0\leq\rho_x,\rho_y < 1,\, -1\leq\rho_{xy}\leq 1$ such that
\begin{subequations}\label{eq_stablerhos}
\begin{align}
2\rho_x^2(1+2|\rho_{xy}|) &< 1,\label{eq_stablerhos1}\\
2\rho_y^2(1+2|\rho_{xy}|) &< 1,\label{eq_stablerhos2}\\
2\rho_x\rho_y(3\rho_{xy}^2 + 2|\rho_{xy}|+1 ) &< 1.\label{eq_stablerhos3}
\end{align}
\end{subequations}
\end{assumption}

Section \ref{sec_2dMeanSquareStability} shows that Assumption \eqref{ass-corr} is a sufficient condition for stability of the schemes \eqref{eq_2DimplicitMilstein} and \eqref{eq_ADIdifference}.\footnote{We do not believe it to be necessary, due to estimates made in the derivation and as evidenced by numerical tests.}
If $\rho_{xy}=0$, these conditions reduce to $2\rho_x^2\leq 1$ and $2\rho_y^2\leq 1$, which is analogous to the  condition for mean-square stability in the 1-dimensional case in \cite{ref1}. In the worst case, $|\rho_{xy}|=1$, sufficient conditions are $\rho_x,\rho_y\leq 1/\sqrt{6}$, and $\rho_x\rho_y\leq 1/12$.

First, we recall the setting of our numerical schemes. For $T>0$ fixed, we discretise $[0,T]$ by $N$ steps, and let $k=T/N$ be the timestep. We discretise space $\mathbb{R}^2$ with mesh sizes $h_x$ and $h_y$.

The following theorem shows the convergence of the implicit Milstein difference scheme \eqref{eq_2DimplicitMilstein}.
The constant $\theta\in(0,1)$ therein is determined by the parameters $\rho_x,\rho_y$ and $\rho_{xy}$. 
The proof of Lemma \ref{lem_lammage1high} gives an explicit value; though not sharp, this is sufficient to highlight the divergence
for $h_x, h_y \rightarrow 0$ when $k$ is fixed. 

\begin{theorem}\label{thm_mean-square}
Let $T>0$, $k=T/N$, $h_x>0$ and $h_y>0$ be mesh sizes. {
Then, under Assumption~\ref{ass-corr}, there exists $\theta \in (0,1)$, independent of $h_x,h_y$ and $k$, such that
the implicit Milstein finite difference scheme \eqref{eq_2DimplicitMilstein} has the error expansion} 
\begin{equation}
\label{error_main}
\begin{aligned}
V_{i,j}^N-v(T,x_i,y_j) &= k\,E_1(T,x_i,y_j) + h_x^2\,E_2(T,x_i,y_j) + h_y^2\,E_3(T,x_i,y_j) + \theta^{N}h_x^{-2} \,E_4(T,x_i,y_j)\\
&\quad + \theta^{N}h_y^{-2} \,E_5(T,x_i,y_j) + o(k,h_x^2,h_y^2,\theta^{N}h_x^{-2},\theta^{N}h_y^{-2})\,R(T,x_i,y_j),
\end{aligned}
\end{equation}
where $x_i = ih_x$, $y_j = jh_y$, $E_1,\ldots,E_5,$ and $R$ are random variables with bounded first and second moments, 
all independent of $h_x$, $h_y$ and $k$.
\end{theorem}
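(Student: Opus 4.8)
The plan is to follow the Fourier approach announced in the introduction, exploiting that \eqref{eq_2DimplicitMilstein} has constant coefficients, so that the spatial (semi-discrete) Fourier transform diagonalises the scheme. Writing it as $A V^{n+1} = B_n V^n$ with $A$ the deterministic implicit operator and $B_n$ the random explicit operator, the transform turns each step into a scalar multiplication $\hat V^{n+1}(\omega) = g_n(\omega)\hat V^n(\omega)$, where $g_n = \hat B_n/\hat A$ is the per-step amplification symbol; the symbols of $D_x, D_{xx}, D_x^2, D_{xy}$ are $2\mathrm i\sin(\omega_x h_x)$, $-4\sin^2(\omega_x h_x/2)$, $-4\sin^2(\omega_x h_x)$ and $-4\sin(\omega_x h_x)\sin(\omega_y h_y)$, and analogously in $y$. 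The crucial feature of the Dirac datum \eqref{eq_2dDiracInitialApprox} is that its transform $\hat V^0(\omega) = e^{-\mathrm i(\omega_x x_0 + \omega_y y_0)}$ has modulus one for all $\omega$; this flat spectrum is exactly what distinguishes the present analysis from the $L_2$-data case and forces a careful treatment of the high wave numbers. I would then write $\hat V^N(\omega) = \big(\prod_{n=0}^{N-1} g_n(\omega)\big)\hat V^0(\omega)$, compute the continuous transform $\hat v(T,\cdot)$ of the explicit solution \eqref{eq_2dTheoreticalResult} (a Gaussian, hence rapidly decaying, and random through $M_T^x, M_T^y$), and represent the pointwise error at grid points as the inverse transform of $\hat V^N - \hat v(T,\cdot)$ over the Nyquist box $[-\pi/h_x,\pi/h_x]\times[-\pi/h_y,\pi/h_y]$, the aliasing error from the Gaussian tails being exponentially small and absorbed in the remainder.

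Next I would split the Nyquist box into a low-frequency core, where $|\omega_x|, |\omega_y|$ stay bounded (the region carrying the mass of $\hat v(T,\cdot)$), and high-frequency regions near the edges $|\omega_x| = \pi/h_x$ and $|\omega_y| = \pi/h_y$. In the core I would Taylor-expand $\log g_n$ about the exact one-step propagator $\exp(-k\,\sigma(\omega))$, where $\sigma$ is the symbol of the differential operator in \eqref{eq_SPDE}. Since the central differences are second-order accurate and the Milstein approximation captures the iterated stochastic integrals of \eqref{eq_integration} up to the leading $O(k)$ time-discretisation error, the per-step discrepancy is $O(k^2) + O(k h_x^2) + O(k h_y^2)$ with smooth symbol-valued coefficients; summing over the $N = T/k$ steps, using the independence of the $Z_n^x, \widetilde Z_n^y$ and the mean-square stability from Section \ref{sec_2dMeanSquareStability} to control the accumulated product, produces a leading error consisting of $k$, $h_x^2$ and $h_y^2$ each multiplied by a smooth symbol times $\hat v(T,\omega)$. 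Applying the inverse transform turns these into the fields $k E_1 + h_x^2 E_2 + h_y^2 E_3$, whose moments are bounded through the Gaussian decay of $\hat v(T,\cdot)$ and the bounded moments of the Gaussian increments.

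In the high-frequency regions the exact solution's spectrum is negligible, so the error there is essentially the spurious discrete content $\big(\prod_n g_n\big)\hat V^0$ itself. Here I would invoke Lemma \ref{lem_lammage1high}, which under Assumption \ref{ass-corr} furnishes $\theta\in(0,1)$ bounding the amplification uniformly over the high-frequency region, whence independence across $n$ gives $\mathbb E\big|\prod_n g_n(\omega)\big|^2\le\theta^{2N}$. Carrying out the inverse transform over the near-Nyquist edges, using the flat Dirac spectrum $|\hat V^0|\equiv 1$ and the Jacobian of the rescalings $\omega_x = \xi/h_x$ and $\omega_y = \eta/h_y$, produces the spurious contributions $\theta^N h_x^{-2} E_4$ and $\theta^N h_y^{-2} E_5$, with $E_4, E_5$ carrying the smooth transverse profile and having bounded first and second moments uniformly in $h_x, h_y, k$. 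Finally I would collect terms by order, absorbing all cross terms, higher-order consistency terms and region-overlap contributions into the remainder $o(\cdots)R$, and verify the uniform moment bounds on $R$ from the stability estimate.

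I expect the high-frequency step to be the main obstacle. The amplification factors are random and individually unbounded (their moduli can exceed one; only their second moments are controlled in mean square), so the $\theta^N$ decay must be extracted from a careful second-moment bookkeeping of the product $\prod_n g_n$ rather than from a pathwise bound. The precise $h_x^{-2}$ and $h_y^{-2}$ scalings, together with the claim that the coefficient fields $E_4, E_5$ have $h$- and $k$-independent moments, require one to resolve exactly how the near-Nyquist spurious mode, seeded by the $O(h_x^{-1}h_y^{-1})$ Dirac spike, is transported and localised by the scheme. Controlling the crossover between the low- and high-frequency regions uniformly, and ensuring that the remainder is genuinely of lower order than each retained term (including the terms that diverge as $h_x, h_y\to 0$ at fixed $k$), is where the bulk of the technical work lies.
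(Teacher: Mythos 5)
Your overall strategy coincides with the paper's: Fourier symbols $X_n$ seeded by the flat Dirac spectrum, a low/high wave-number split, a logarithmic expansion of the per-step amplification factor against the exact propagator in the low region, second-moment contraction of the random product in the high region, and assembly by the inverse transform. But there is a genuine gap in your frequency decomposition. You take the low-frequency core to be a \emph{bounded} set of frequencies and the high region to be the near-Nyquist edges, and you claim a uniform bound $\mathbb{E}\big|\prod_n g_n(\omega)\big|^2\le\theta^{2N}$ over the whole high region. Both halves fail in the intermediate band. If the core is bounded by a fixed constant $K$, the error contribution from $|\omega|>K$ is only $O(\mathrm{e}^{-cK^2T})$ --- a constant, not $o(k,h_x^2,h_y^2)$ --- so the region where you Taylor-expand must \emph{grow} as $h,k\to 0$. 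And the uniform $\theta^{2N}$ contraction is false at moderate frequencies: for $|\xi|\sim k^{-p}$ one has $\mathbb{E}|g_n|^2\approx 1-ck^{1-2p}$, so the product decays only like $\exp(-cTk^{-2p})$, which is far larger than $\theta^{2N}=\exp(-c'T/k)$. A single $\theta\in(0,1)$ cannot cover the band between the core and the genuinely high frequencies, and your proposal offers no mechanism there; you flag the crossover as the main obstacle but leave it unresolved.

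The paper's proof supplies exactly this mechanism. It takes the low region $|\xi|\le\min\{h_x^{-2p},k^{-p}\}$, $|\eta|\le\min\{h_y^{-2p},k^{-p}\}$ with $0<p<\tfrac14$: large enough that the Gaussian tails beyond it are $o(h^r)$ for every $r$, small enough that the remainders $O\big((|\xi|+|\eta|)^3k^{3/2}\big)$ and $O\big((|\xi|+|\eta|)^4k^2\big)$ in the expansion of $\log C_n$ stay $o(k)$ after summation over $N=T/k$ steps. The high region is then treated in cases (Lemma \ref{lem_2dmeansquare_high}): for $k<h^2$ everything there is $o(h^r)$; for $k\ge h^2$ it is split into a middle band $k^{-p}\le|\cdot|\le k^{-1/2}$, where the decay $\exp(-\beta Tk^{-2p})$, with $\beta$ determined by Assumption \ref{ass-corr}, is super-polynomially small and absorbed into the remainder, and a top band $|\cdot|\ge k^{-1/2}$, where $\lambda d=\tfrac{k}{h^2}\big(\sin^2\tfrac{\xi h}{2}+\sin^2\tfrac{\eta h}{2}\big)$ is bounded below so that $\mathbb{E}|C_n|^2\le\theta_0\coloneqq 1-\tfrac12\beta$ per step. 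Only this top band produces the $\theta^Nh_x^{-2}$ and $\theta^Nh_y^{-2}$ terms, with $\theta=\sqrt{\theta_0}$ and the $h^{-2}$ scaling coming from Cauchy--Schwarz over the $O(h^{-2})$-area region against the flat Dirac spectrum --- consistent with your heuristic, but requiring the case analysis you omit. The rest of your outline (the cancellations $c_xc_y+d=0$, $b_x+\tfrac12c_x^2=0$, $b_y+\tfrac12c_y^2=0$ that make the Milstein log-expansion first order; independence of the increments across steps; control of the moment-bounded coefficient fields via the Gaussian decay of $X(T)$) matches the paper's argument.
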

\begin{proof}
See Section \ref{sec_2dMeanSquareConvergence}.
\end{proof}

\begin{remark}\label{rmk_2dImplicitConvergence}
{
In the setting of Theorem \ref{thm_mean-square} with $\widehat{h} = \min\{h_x,h_y\}$,}
if 
\begin{equation}\label{eq_2d_k/hcondition}
\theta^{\frac{T}{k}} \leq 2^{-C_0}\cdot \widehat{h}^{4+\beta},
\end{equation}
{
for some $\beta, C_0>0$ independent of $h_x$, $h_y$ and $k$,}
or, equivalently,
\begin{equation}\label{eq_2d_k/hcondition_2}
k\leq \frac{T\log_2(\theta^{-1})}{C_0+(4+\beta)\log_2(\widehat{h}^{-1})},
\end{equation}
then the implicit Milstein scheme \eqref{eq_2DimplicitMilstein} has the error expansion
\[
V_{i,j}^N-v(T,x_i,y_j) = k\,E_1(T,x_i,y_j) + h_x^2\,E_2(T,x_i,y_j) + h_y^2\,E_3(T,x_i,y_j) + o(k,h_x^2,h_y^2)\,R(T,x_i,y_j).
\]
\end{remark}

\begin{corollary}
Under the conditions of Theorem~\ref{thm_mean-square} and Remark~\ref{rmk_2dImplicitConvergence}, the error of the implicit Milstein scheme \eqref{eq_2DimplicitMilstein} at time $T$ satisfies, for all $i,j \in \mathbb{Z}$,
\begin{equation}\label{eq_2dmserror}
\sqrt{\mathbb{E}\big[| V_{i,j}^N-v(T,ih_x,jh_y)|^2\big]}= O(h_x^2) + O(h_y^2) + O(k).
\end{equation} 

\end{corollary}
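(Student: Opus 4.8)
The plan is to read off the $L_2(\Omega)$ bound directly from the pointwise (in space) error expansion of Remark~\ref{rmk_2dImplicitConvergence}, so that essentially all the analytic work is already done in Theorem~\ref{thm_mean-square}. Under the timestep restriction of Remark~\ref{rmk_2dImplicitConvergence}, the two potentially divergent contributions $\theta^{N}h_x^{-2}E_4$ and $\theta^{N}h_y^{-2}E_5$ from \eqref{error_main} are controlled: the condition $\theta^{T/k}\le 2^{-C_0}\widehat{h}^{4+\beta}$ forces $\theta^{N}\widehat{h}^{-2}=o(\widehat{h}^{2})$, so these terms are absorbed into the $o$-remainder, leaving
\[
V_{i,j}^N-v(T,x_i,y_j) = k\,E_1(T,x_i,y_j) + h_x^2\,E_2(T,x_i,y_j) + h_y^2\,E_3(T,x_i,y_j) + o(k,h_x^2,h_y^2)\,R(T,x_i,y_j),
\]
which is exactly the starting point for the corollary.

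Next I would apply Minkowski's inequality, i.e.\ the triangle inequality for the norm $\|\cdot\|_{L_2(\Omega)} = \sqrt{\mathbb{E}[|\cdot|^2]}$, to the right-hand side. Since the deterministic coefficients $k$, $h_x^2$, $h_y^2$ and the scalar $o(k,h_x^2,h_y^2)$ factor do not depend on $\omega$, they pull out of the norm, giving
\[
\sqrt{\mathbb{E}\big[|V_{i,j}^N-v(T,x_i,y_j)|^2\big]} \le k\,\|E_1\|_{L_2(\Omega)} + h_x^2\,\|E_2\|_{L_2(\Omega)} + h_y^2\,\|E_3\|_{L_2(\Omega)} + o(k,h_x^2,h_y^2)\,\|R\|_{L_2(\Omega)}.
\]
By Theorem~\ref{thm_mean-square}, the random variables $E_1,E_2,E_3,R$ have second moments bounded independently of $h_x,h_y,k$, so each factor $\|E_i\|_{L_2(\Omega)}$ and $\|R\|_{L_2(\Omega)}$ is $O(1)$. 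Hence the first three terms are $O(k)$, $O(h_x^2)$, $O(h_y^2)$ respectively, and the last is of strictly smaller order; collecting them yields \eqref{eq_2dmserror}.

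There is essentially no real obstacle here, which is appropriate for a corollary: the only point that deserves a line of care is the claim that the bound holds for all $i,j$. For each fixed grid point the argument above is immediate; to state it uniformly in $(i,j)$ one uses that the moment bounds on $E_1,E_2,E_3,R$ in Theorem~\ref{thm_mean-square} are uniform in the spatial location, so the implied constants in the $O$-terms may be taken independent of $i,j$. I would also note explicitly that it is the \emph{randomness} that is carried by the $E_i$ and $R$ while the $o$-symbol denotes a deterministic scalar, which is what legitimises pulling it outside the expectation. With these remarks the corollary follows in a few lines.
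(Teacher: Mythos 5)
Your proposal is correct and coincides with the paper's (implicit) argument: the corollary is stated there as an immediate consequence of Theorem~\ref{thm_mean-square} and Remark~\ref{rmk_2dImplicitConvergence}, obtained exactly as you do by noting that the timestep condition makes $\theta^N h_x^{-2}$ and $\theta^N h_y^{-2}$ of order $o(\widehat{h}^{2})$, and then taking $L_2(\Omega)$ norms of the reduced expansion via the triangle inequality together with the uniform second-moment bounds on $E_1,E_2,E_3,R$. Your added remarks on uniformity in $(i,j)$ and on the $o$-symbol being a deterministic scalar are sound and consistent with the paper.
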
 

For the ADI discretisation scheme \eqref{eq_ADIdifference}, a similar convergence result holds.

\begin{theorem}\label{thm_ADIconvergence}
Under the conditions of Remark \ref{rmk_2dImplicitConvergence}, the error of the ADI scheme~\eqref{eq_ADIdifference} has the same order as for the implicit Milstein scheme,
\[
V_{i,j}^N-v(T,x_i,y_j) = k\,E_1(T,x_i,y_j) + h_x^2\,E_2(T,x_i,y_j) + h_y^2\,E_3(T,x_i,y_j) + o(k,h_x^2,h_y^2)\,R(T,x_i,y_j),
\]
where $E_1$, $E_2$, $E_3$ and $R$ are random variables with bounded first and second moments.
\end{theorem}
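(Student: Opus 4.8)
The plan is to reduce the ADI scheme \eqref{eq_ADIdifference} to the already-analysed implicit scheme \eqref{eq_2DimplicitMilstein} by a perturbation argument in Fourier space. Writing
\[
A_x = \frac{\mu_x k}{2h_x}D_x - \frac{k}{2h_x^2}D_{xx}, \qquad A_y = \frac{\mu_y k}{2h_y}D_y - \frac{k}{2h_y^2}D_{yy},
\]
the left-hand operator of \eqref{eq_2DimplicitMilstein} is $I + A_x + A_y$, while that of \eqref{eq_ADIdifference} is $(I+A_x)(I+A_y) = I + A_x + A_y + A_x A_y$, and the (random) right-hand sides of the two schemes coincide. Since each of $A_x, A_y$ carries a factor $k$, the two left-hand operators differ only by the higher-order term $A_x A_y = O(k^2)$, which is precisely the order of splitting error consistent with the first-order time accuracy already imposed by the Milstein approximation. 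Applying the discrete Fourier transform exactly as in Section \ref{sec_2dMeanSquareConvergence}, both schemes reduce to scalar multiplication by a one-step amplification factor; denoting by $a_x, a_y$ the deterministic symbols of $A_x, A_y$ and by $\hat b$ the common random symbol of the right-hand side, the two factors are $G^{imp} = \hat b/(1+a_x+a_y)$ and $G^{ADI} = \hat b/[(1+a_x)(1+a_y)]$, so that
\[
G^{ADI} - G^{imp} = -\,\hat b\,\frac{a_x a_y}{(1+a_x)(1+a_y)(1+a_x+a_y)}.
\]

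The central step is to rerun the error analysis of Section \ref{sec_2dMeanSquareConvergence} with $G^{ADI}$ in place of $G^{imp}$. The key algebraic observation is that the two denominators agree to first order, $\frac{1}{(1+a_x)(1+a_y)} - \frac{1}{1+a_x+a_y} = -a_x a_y + O(|a_x|^3+|a_y|^3)$, so on the low wave-number region, where $a_x = k(i\mu_x\omega_x + \tfrac12\omega_x^2) + O(k h_x^2)$ and similarly for $a_y$, the correction $G^{ADI}-G^{imp}$ is $O(k^2)$ and its spatial content consists of mixed $x$–$y$ derivatives. Consequently the splitting perturbs only the $O(k)$ coefficient in the expansion \eqref{error_main}: the spatial leading terms $h_x^2 E_2$ and $h_y^2 E_3$ stem from the identical symbols of $D_{xx},D_{yy}$ and are untouched at leading order, since the $h_x^2$-dependence of the correction is of the higher order $k^2 h_x^2$, while the time coefficient is replaced by some $E_1$ with again bounded first and second moments. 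This would yield an expansion of exactly the form \eqref{error_main} for the ADI scheme, with possibly different but still admissible random variables.

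For the high wave-number region, which in the Dirac case generates the $\theta^{N}h^{-2}$ contributions, I would exploit that the ADI factorisation damps high frequencies at least as strongly as the unfactored scheme. Writing $p_x = \operatorname{Re} a_x \ge 0$ and $p_y = \operatorname{Re} a_y \ge 0$, one has $|(1+a_x)(1+a_y)| \ge (1+p_x)(1+p_y)$, whose product growth dominates $|1+a_x+a_y|$ once $p_x,p_y$ are large, so that $|G^{ADI}| \le |G^{imp}|$ throughout the high wave-number tail. Combined with the mean-square stability of \eqref{eq_ADIdifference} already established in Section \ref{sec_2dMeanSquareStability} under Assumption \ref{ass-corr}, this shows that the high-frequency error of the ADI scheme is controlled by a factor $\theta^{N}$ with the same (or a smaller) $\theta\in(0,1)$, so the $\theta^{N}h_x^{-2}$ and $\theta^{N}h_y^{-2}$ terms carry over with the same structure. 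Imposing the condition of Remark \ref{rmk_2dImplicitConvergence} then removes these terms and leaves the stated expansion.

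The hard part will be the uniform control of the correction $G^{ADI}-G^{imp}$ across the crossover regime, where $a_x,a_y$ interpolate between $O(k)$ and $O(k/h^2)$ and neither the numerator $a_x a_y$ nor the denominator is manifestly dominant, so a plain Taylor expansion is unavailable. There I would use $(\operatorname{Im} a_x)^2 \le 2\mu_x^2 k\, p_x$ to obtain $|a_x|^2 \le p_x^2 + 2\mu_x^2 k\, p_x$, and feed this into the telescoping identity $\prod_n G^{ADI}_n - \prod_n G^{imp}_n = \sum_m (\prod_{n>m} G^{ADI}_n)(G^{ADI}_m - G^{imp}_m)(\prod_{n<m} G^{imp}_n)$, invoking the mean-square stability of both factor products to reduce the estimate to a single-step bound on $\mathbb{E}\,|G^{ADI}_m - G^{imp}_m|^2$ summed over the $N=T/k$ steps. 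Showing that this sum contributes at most $O(k)$ in the $L_2$ norm after the inverse transform, uniformly in the mesh, while not degrading the $\theta^{N}$ decay of the high-frequency tail, is the technical crux and closely mirrors the region-by-region estimates used in the proof of Theorem \ref{thm_mean-square}.
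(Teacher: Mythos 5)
Your proposal is correct and follows essentially the same route as the paper: there, too, the proof compares the two Fourier amplification factors, uses that $(1-a_xk)(1-a_yk)$ differs from $1-(a_x+a_y)k$ only by the $O(k^2)$ term $a_xa_yk^2$, so the low wave-number Taylor expansion reproduces the identical leading $h_x^2$ and $h_y^2$ terms with only the $O(k)$ coefficient perturbed, and disposes of the high wave numbers via the denominator domination $|1-(a_x+a_y)k|\leq|(1-a_xk)(1-a_yk)|$ together with the condition of Remark \ref{rmk_2dImplicitConvergence}. The one point where you overcomplicate: the crossover regime and telescoping identity in your last paragraph are unnecessary, since the paper takes $\mu_x=\mu_y=0$ without loss of generality (as justified at the start of Section \ref{sec_2dMeanSquareStability}), so $a_x,a_y\leq 0$ are real, the one-step domination holds pathwise at every wave number, and the estimates of Lemma \ref{lem_2dmeansquare_high} carry over verbatim with no intermediate-regime argument needed.
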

\begin{proof}
See Section \ref{sec_2dMeanSquareConvergence}.
\end{proof}

Theorem \ref{thm_mean-square} and Theorem \ref{thm_ADIconvergence} state the convergence pointwise in space and $L_2$ in probability. If we consider $L_2$ convergence in space, then by applying Parseval's theorem, we can get further results.
\begin{corollary}\label{cor_L2spaceconvergence}
Under the conditions of Theorem~\ref{thm_mean-square}, the $L_2$ error in space and in probability of the implicit Milstein scheme \eqref{eq_2DimplicitMilstein} at time $T$ satisfies,
\begin{equation}
\sqrt{\sum_{i,j}\mathbb{E}\Big[\big| V_{i,j}^N-v(T,ih_x,jh_y)\big|^2\Big]h_xh_y}= O(h_x^2) + O(h_y^2) + O(k) + O(\theta^N h_x^{-1/2}h_y^{-1/2}).
\end{equation} 
If the initial condition lies in $L_2$, then 
\begin{equation}
\sqrt{\sum_{i,j}\mathbb{E}\Big[\big| V_{i,j}^N-v(T,ih_x,jh_y)\big|^2\Big]h_xh_y}= O(h_x^2) + O(h_y^2) + O(k).
\end{equation} 
\end{corollary}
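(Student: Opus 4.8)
The plan is to avoid squaring the pointwise expansion of Theorem~\ref{thm_mean-square} directly---doing so would only reproduce the crude bound $O(\theta^N h_x^{-2}) + O(\theta^N h_y^{-2})$ from the $E_4,E_5$ terms---and instead to return to the Fourier representation of the error constructed in the proof of Theorem~\ref{thm_mean-square} in Section~\ref{sec_2dMeanSquareConvergence}. By the discrete Parseval identity,
\[
\sum_{i,j}\big|V_{i,j}^N - v(T,ih_x,jh_y)\big|^2 h_x h_y = \frac{1}{(2\pi)^2}\int_{-\pi/h_x}^{\pi/h_x}\!\!\int_{-\pi/h_y}^{\pi/h_y}\big|\widehat{e}(\omega_x,\omega_y)\big|^2\,\mathrm{d}\omega_x\,\mathrm{d}\omega_y,
\]
where $\widehat{e}$ is the (random) Fourier symbol of the error; taking $\mathbb{E}[\cdot]$ and applying Fubini moves the expectation inside the integral, so that the whole estimate reduces to bounding $\mathbb{E}[|\widehat e|^2]$ in $L_2$ over the Brillouin zone.

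First I would split the frequency domain into the region $\Omega_L$ where the exact-solution symbol is non-negligible and its complement $\Omega_H$ within the Brillouin zone, exactly as in the proof of Theorem~\ref{thm_mean-square}. On $\Omega_L$ the symbol factorises as a consistency error of size $O(k) + O(h_x^2) + O(h_y^2)$ multiplied by the exact Gaussian solution symbol, which decays like $\exp(-c(\omega_x^2+\omega_y^2)T)$ and is therefore square-integrable over all of $\mathbb{R}^2$ uniformly in the mesh; integrating gives a contribution $O(k^2) + O(h_x^4) + O(h_y^4)$, i.e.\ $O(k) + O(h_x^2) + O(h_y^2)$ after the square root. On $\Omega_H$ the exact symbol is negligible while the numerical amplification factor is controlled by $\theta^N$ through the mean-square stability estimates of Section~\ref{sec_2dMeanSquareStability}, so that $\mathbb{E}[|\widehat e|^2] \lesssim \theta^{2N}|\widehat v_0|^2$. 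For Dirac data $|\widehat v_0|\equiv 1$, and since $|\Omega_H| = O(h_x^{-1}h_y^{-1})$, this region contributes $O(\theta^{2N}h_x^{-1}h_y^{-1})$, whence $O(\theta^N h_x^{-1/2}h_y^{-1/2})$ after the square root. Collecting both regions and accounting for the remainder $R$ (which is higher order in the same norms) yields the first estimate.

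For the second claim, the only change is that with $L_2$ initial data the symbol $\widehat v_0$ lies in $L_2(\mathbb{R}^2)$, so $\int_{\Omega_H}\mathbb{E}[|\widehat e|^2]\lesssim \theta^{2N}\int_{\mathbb{R}^2}|\widehat v_0|^2 = O(\theta^{2N})$ uniformly in $h_x,h_y$. Since $N = T/k$, the factor $\theta^{N}$ is exponentially small as $k\to 0$ and is dominated by the $O(k)$ term, so the high-frequency contribution disappears from the leading-order bound, giving the second estimate. This is exactly the mechanism by which the mild Dirac-data instability is absent for $L_2$ data.

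I expect the main obstacle to be the high-frequency bound $\mathbb{E}[|\widehat e|^2]\lesssim \theta^{2N}|\widehat v_0|^2$ on $\Omega_H$: one must verify that the random product amplification symbol of the implicit Milstein scheme satisfies this \emph{uniformly} in $(\omega_x,\omega_y)$ over the entire high-frequency region, not merely at the single worst mode, which is precisely what Assumption~\ref{ass-corr} and the stability machinery of Section~\ref{sec_2dMeanSquareStability} are designed to supply. A secondary technical point is controlling the cross terms in $|\widehat e|^2$ and the remainder via Cauchy--Schwarz so that they do not degrade the stated orders, together with the routine bookkeeping relating the discrete Parseval sum to the continuous frequency integral.
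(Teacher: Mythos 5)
Your proposal is correct and follows essentially the same route as the paper's own proof: Parseval's identity to pass to the Fourier domain, the low-wave expansion of Lemma~\ref{lem_2dmeansquare_low} giving the $O(k)+O(h_x^2)+O(h_y^2)$ contribution, and the uniform high-wave decay $\mathbb{E}[|X_N|^2]\lesssim\theta^{2N}$ from Lemma~\ref{lem_2dmeansquare_high} combined with $|\Omega_{\text{high}}|=O(h_x^{-1}h_y^{-1})$ for Dirac data, respectively with $\|\widetilde v_0\|_{L_2}<\infty$ and $\theta^{T/k}=o(k^r)$ for $L_2$ data. The uniformity over the high-frequency region that you flag as the main obstacle is indeed exactly what the paper's stability estimates in Section~\ref{sec_2dhighwave} supply, so no gap remains.
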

\begin{proof}
See Section \ref{sec_2dMeanSquareConvergence}.
\end{proof}


\section{Fourier analysis of mean-square stability}\label{sec_2dMeanSquareStability}
Recall the SPDE \eqref{eq_SPDE},
\begin{equation}
\!\mathrm{d}v = \left[-\mu_x\frac{\partial v}{\partial x} - \mu_y\frac{\partial v}{\partial y} +  \frac{1}{2}\bigg(\frac{\partial^2 v}{\partial x^2} + 2\sqrt{\rho_x\rho_y}\rho_{xy}\frac{\partial^2v}{\partial x\partial y} + \frac{\partial^2 v}{\partial y^2} \bigg)\right]\mathrm{d}t - \sqrt{\rho_x}\frac{\partial v}{\partial x}\mathrm{d}M_t^x - \sqrt{\rho_y}\frac{\partial v}{\partial y}\mathrm{d}M_t^y
\label{SPDE_rep}.
\end{equation}
Define  the Fourier transform pair
\begin{align*}
\widetilde{v}(t,\xi,\eta) &= \int_{-\infty}^\infty\int_{-\infty}^\infty v(t,x,y)\mathrm{e}^{-\mathrm{i}\xi x -\mathrm{i}\eta y}\,\mathrm{d}x\,\mathrm{d}y,\\
v(t,x,y)&=\frac{1}{4\pi^2}\int_{-\infty}^\infty\int_{-\infty}^\infty\widetilde{v}(t,\xi,\eta)\mathrm{e}^{\mathrm{i}\xi x + \mathrm{i}\eta y}\,\mathrm{d}\xi\,\mathrm{d}\eta.
\end{align*}
The Fourier transform of \eqref{SPDE_rep} yields
\begin{equation}\label{eq_fourier}
\mathrm{d}\widetilde{v} = 
-\bigg(\big(\mathrm{i}\mu_x\xi + \mathrm{i}\mu_y\eta + \frac{1}{2}\xi^2 + \sqrt{\rho_x\rho_y}\rho_{xy}\xi\eta + \frac{1}{2}\eta^2 \big)\,\mathrm{d}t 
+ \mathrm{i}\sqrt{\rho_x}\xi\,\mathrm{d}M_t^x + \mathrm{i}\sqrt{\rho_y}\eta\,\mathrm{d}M_t^y \bigg)\widetilde{v},
\end{equation}
subject to the initial data $\widetilde{v}(0) = \mathrm{e}^{-\mathrm{i}\xi x_0 -\mathrm{i}\eta y_0}.$
For the remainder of the analysis, we take $\mu_x = \mu_y =0$. This does not alter the results (see Remark 2.3 in \cite{ref1} for the 1d case).

The solution to \eqref{eq_fourier} is
$$\widetilde{v}(t) = X(t)\mathrm{e}^{-\mathrm{i}\xi x_0 -\mathrm{i}\eta y_0},$$
where
\begin{equation}\label{eq_solXn}
X(t) = \exp\bigg(-\frac{1}{2}(1-\rho_x)\xi^2t -\frac{1}{2}(1-\rho_y)\eta^2t -\mathrm{i}\xi\sqrt{\rho_x}M_t^x - \mathrm{i}\eta\sqrt{\rho_y}M_t^y\bigg).
\end{equation}

For the numerical solution, we can use a discrete-continuous Fourier decomposition {
(note that we approximate $v(nk,ih_x,jh_y)$ by $V_{i,j}^n$)}
\[
V_{i,j}^0 = \frac{1}{4\pi^2 h_xh_y}\int_{-\pi}^{\pi}\int_{-\pi}^{\pi}
\widetilde{V}^0(u,v)\mathrm{e}^{\mathrm{i}\big((i-i_0)u + (j-j_0)v\big)}\,\mathrm{d}u\,\mathrm{d}v,
\]
where $i_0 = x_0/h_x$, $j_0 = y_0/h_y$, and
\[
\widetilde{V}^0(u,v) = h_xh_y\sum_{i=-\infty}^\infty\sum_{j=-\infty}^\infty
\ V_{i,j}^0\mathrm{e}^{\mathrm{i}\big(-(i-i_0)u-(j-j_0)v\big)}.
\]

From \eqref{eq_2dDiracInitialApprox}, $V_{i,j}^0 = h_x^{-1}h_y^{-1}\delta_{(i_0,\,j_0)}$, we have $\widetilde{V}^0(u,v) = 1$ for all $(u,v)\in\mathbb{R}^2$. Similarly for $n$-th time-step,
\begin{equation}\label{eq_Vnxieta}
\begin{aligned}
V_{i,j}^n &= \frac{1}{4\pi^2 h_xh_y}\int_{-\pi}^{\pi}\int_{-\pi}^{\pi}
 \widetilde{V}^n(u,v)\mathrm{e}^{\mathrm{i}\big((i-i_0)u + (j-j_0)v\big)}\,\mathrm{d}u\,\mathrm{d}v\\
 &= \frac{1}{4\pi^2 }\int_{-\frac{\pi}{h_y}}^{\frac{\pi}{h_y}}\int_{-\frac{\pi}{h_x}}^{\frac{\pi}{h_x}} 
\widetilde{V}^n(\xi,\eta)\mathrm{e}^{\mathrm{i}\big((i-i_0)\xi h_x + (j-j_0)\eta h_y\big)}\,\mathrm{d}\xi\,\mathrm{d}\eta.
\end{aligned}
\end{equation}
{
In the last step, we integrate by substitution, $\xi=u/h_x,\ \eta=v/h_y$.}

{
By analogy with the theoretical solution $\widetilde{v}(t) = X(t)\widetilde{v}(0)$,
we make the ansatz
\begin{equation}\label{eq_ansatzXn}
\widetilde{V}^n(\xi,\eta) = X_n(\xi,\eta)\widetilde{V}^0(\xi,\eta),
\end{equation}
but as $\widetilde{V}^0(\xi,\eta)=1$ we simply have  $\widetilde{V}^n(\xi,\eta) = X_n(\xi,\eta)$.
}
{
We can regard $X_n(\xi,\eta)$ as the numerical approximation to $X(nk)$ in \eqref{eq_solXn}.}

We say that the scheme is asymptotically mean-square stable, provided for any~$(\xi,\eta) \in [-\pi/h_x,\pi/h_x]\times[-\pi/h_y,\pi/h_y]$,
\begin{eqnarray}
\label{eqn:mss}
\lim_{n\rightarrow \infty}\mathbb{E}\left[|X_n(\xi,\eta)|^2\right]= 0.
\end{eqnarray}
This concept has been defined in the context of systems of SDEs in Definition 2.2, 3., in \cite{buckwar2010towards}, and we apply it here to a fixed wave number in the Fourier domain.
A generalisation to SPDEs is analysed in \cite{lang2017mean} (see Definition 2.1 therein). A link between (\ref{eqn:mss}) and mean-square stability of the SPDE discretisation can be established using Parseval's equality, if it can be shown that the $2$-norm of $X_n$ diminishes.
We will not do this here but show convergence in $L_2$ (for fixed T) directly under the same conditions;
see \cite{ref1} for mean-square stability and convergence of a 1-d parabolic SPDE.
If (\ref{eqn:mss}) holds without any restriction between $h_x$, $h_y$ and $k$, we call it unconditionally stable.
This leads to three conditions summarised in Assumption \ref{ass-corr}, as shown by the following Theorem~\ref{thm_2dstability}.

\begin{theorem}\label{thm_2dstability}
The implicit Milstein finite difference scheme \eqref{eq_2DimplicitMilstein} is unconditionally stable in the mean-square sense of
(\ref{eqn:mss}) provided
Assumption \ref{ass-corr}
holds.
\end{theorem}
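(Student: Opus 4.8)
The plan is to run a von Neumann (Fourier) stability analysis directly on the scheme \eqref{eq_2DimplicitMilstein}. Inserting a single Fourier mode and using the symbols of the difference operators --- writing $a=\xi h_x$, $b=\eta h_y$, one has $\widehat{D_x}=2\mathrm{i}\sin a$, $\widehat{D_{xx}}=-4\sin^2(a/2)$, $\widehat{D_x^2}=-4\sin^2 a$, $\widehat{D_{xy}}=-4\sin a\sin b$, and analogously in $y$ --- the scheme collapses to the scalar recursion $P\,X_{n+1}=Q_n X_n$, where (recall $\mu_x=\mu_y=0$) the factor $P = 1 + 2k h_x^{-2}\sin^2(a/2) + 2k h_y^{-2}\sin^2(b/2)$ is deterministic and positive, while $Q_n$ is a degree-two polynomial in the step-$n$ Gaussians $Z_{n,x},\widetilde Z_{n,y}$. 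Since $X_n$ depends only on the increments up to step $n-1$, it is independent of $Q_n$, so $\mathbb{E}[|X_{n+1}|^2]=P^{-2}\,\mathbb{E}[|Q_n|^2]\,\mathbb{E}[|X_n|^2]=G^{\,n+1}$ with amplification factor $G:=\mathbb{E}[|Q_n|^2]/P^2$ and $X_0=1$. Thus \eqref{eqn:mss} reduces to establishing $G<1$ for every admissible wave number away from the zero mode, at which $G=1$ reflects conservation of mass, consistent with $|X(t)|\equiv 1$ in \eqref{eq_solXn} at $\xi=\eta=0$.

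Next I would compute $\mathbb{E}[|Q_n|^2]$ exactly. Splitting $Q_n$ into real and imaginary parts and abbreviating $p_x=\sqrt{\rho_x k}\,\sin a/h_x$ and $p_y=\sqrt{\rho_y k}\,\sin b/h_y$, the only quantities entering are $p_x^2,p_y^2,p_xp_y$ and their squares. Using the moments of the jointly normal pair $(Z_{n,x},\widetilde Z_{n,y})$ with correlation $\rho_{xy}$ --- in particular $\mathbb{E}[(Z^2-1)^2]=2$, $\mathbb{E}[Z_{n,x}^2\widetilde Z_{n,y}^2]=1+2\rho_{xy}^2$ and $\mathbb{E}[Z_{n,x}^3\widetilde Z_{n,y}]=3\rho_{xy}$ --- the second-order cross term $2\rho_{xy}p_xp_y$ cancels between the real and imaginary parts, leaving
\[
\mathbb{E}\big[|Q_n|^2\big]=1+p_x^2+p_y^2+\tfrac12 p_x^4+\tfrac12 p_y^4+(1+3\rho_{xy}^2)\,p_x^2p_y^2+2\rho_{xy}\big(p_x^3p_y+p_xp_y^3\big).
\]

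The crux is then the estimate $\mathbb{E}[|Q_n|^2]\le P^2$, where I would exploit the elementary identity $p_x^2=2\rho_x\cos^2(a/2)\,q_x\le 2\rho_x q_x$ with $q_x:=2kh_x^{-2}\sin^2(a/2)$, so that $P=1+q_x+q_y$, and likewise $p_y^2\le 2\rho_y q_y$. Expanding $P^2=1+2q_x+2q_y+q_x^2+q_y^2+2q_xq_y$, the linear surplus satisfies $2q_x-p_x^2\ge 2q_x(1-\rho_x)\ge0$ and similarly in $y$, so it remains to dominate the quartic part of $\mathbb{E}[|Q_n|^2]$ by $q_x^2+q_y^2+2q_xq_y$. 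I expect the odd cross terms $2\rho_{xy}(p_x^3p_y+p_xp_y^3)$ to be the main obstacle: I would tame them by the Young-type bounds $2|p_x^3p_y|\le p_x^4+p_x^2p_y^2$ and $2|p_xp_y^3|\le p_y^4+p_x^2p_y^2$, after which the quartic part is bounded by $(\tfrac12+|\rho_{xy}|)(p_x^4+p_y^4)+(1+3\rho_{xy}^2+2|\rho_{xy}|)\,p_x^2p_y^2$. Inserting $p_x^4\le4\rho_x^2q_x^2$, $p_y^4\le4\rho_y^2q_y^2$ and $p_x^2p_y^2\le4\rho_x\rho_yq_xq_y$ produces exactly the coefficients $2\rho_x^2(1+2|\rho_{xy}|)$ and $2\rho_y^2(1+2|\rho_{xy}|)$ in front of $q_x^2,q_y^2$ and $4\rho_x\rho_y(3\rho_{xy}^2+2|\rho_{xy}|+1)$ in front of $q_xq_y$.

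Assumption~\ref{ass-corr} is precisely the statement that each of these stays below the matching coefficient $1,1,2$ of $q_x^2+q_y^2+2q_xq_y$, namely \eqref{eq_stablerhos1}--\eqref{eq_stablerhos3}; hence the quartic part is dominated and $\mathbb{E}[|Q_n|^2]\le P^2$. Moreover the surplus $2q_x-p_x^2$ (or $2q_y-p_y^2$) is strictly positive whenever $q_x+q_y>0$, i.e.\ whenever $(\xi,\eta)\neq(0,0)$ in the fundamental domain, so $G<1$ strictly there and $\mathbb{E}[|X_n|^2]=G^n\to0$, which is \eqref{eqn:mss}. As the estimate uses no relation between $h_x,h_y$ and $k$, stability is unconditional.
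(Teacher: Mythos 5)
Your proposal is correct and follows essentially the same route as the paper's proof: the identical von Neumann recursion $P\,X_{n+1}=Q_nX_n$, the same exact evaluation of $\mathbb{E}\big[|Q_n|^2\big]$ from the bivariate Gaussian moments (your cancellation of the $2\rho_{xy}p_xp_y$ term is the paper's identity $c_xc_y+d=0$), and the same Young/AM--GM bounds on the odd quartic terms, so that after the substitutions $p_x^2=2\rho_x\cos^2(\xi h_x/2)\,q_x$ and $q_x=-a_xk$ your coefficients $2\rho_x^2(1+2|\rho_{xy}|)$, $2\rho_y^2(1+2|\rho_{xy}|)$ and $4\rho_x\rho_y(1+2|\rho_{xy}|+3\rho_{xy}^2)$ reproduce the paper's estimates line by line against $P^2=(1+q_x+q_y)^2$, leading to exactly Assumption~\ref{ass-corr}. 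The only differences are cosmetic: your $(p,q)$ notation is slightly cleaner, and you make explicit the strictness of $G<1$ away from the zero mode (where $G=1$ reflects mass conservation), a point the paper leaves implicit.
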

\begin{proof}
By inserting \eqref{eq_Vnxieta} and \eqref{eq_ansatzXn} in \eqref{eq_2DimplicitMilstein}, we have
\begin{equation}\label{eq_Cn1}
\begin{aligned}
X_{n+1}(\xi,\eta) &= \frac{1}{1-(a_x+a_y)k}\bigg(1 -\mathrm{i}c_x\sqrt{\rho_xk}Z_{n,x} -\mathrm{i}c_y\sqrt{\rho_yk}\widetilde{Z}_{n,y}\\
&\quad + b_x\rho_xk(Z_{n,x}^2-1) + b_y\rho_yk(\widetilde{Z}_{n,y}^2-1) + d\sqrt{\rho_x\rho_y}kZ_{n,x}\widetilde{Z}_{n,y}\bigg)X_n(\xi,\eta),
\end{aligned}
\end{equation}
where
\begin{subequations}
\label{eq_abc}
\begin{align}
\label{eq_abc1}
a_x &= -\frac{2\sin^2\frac{\xi h_x}{2}}{h_x^2},\qquad
b_x = -\frac{\sin^2\xi h_x}{2h_x^2},\qquad
c_x = \frac{\sin \xi h_x}{h_x},\qquad
d = -\frac{\sin\xi h_x\sin\eta h_y}{h_xh_y},\\
a_y &= -\frac{2\sin^2\frac{\eta h_y}{2}}{h_y^2},\qquad
b_y = -\frac{\sin^2\eta h_y}{2h_y^2},\qquad
c_y = \frac{\sin \eta h_y}{h_y}.
\label{eq_abc2}
\end{align}
\end{subequations}

To ensure mean-square stability, it is necessary and sufficient (given the multiplicative form and time-homogeneity of \eqref{eq_Cn1}) that for any $(\xi,\eta)$
$$\mathbb{E}|X_{n+1}|^2< \mathbb{E}|X_n|^2,$$
i.e.,
\begin{equation}\label{eq_stablecond}
\!\mathbb{E}\left|\! \frac{1 \!-\!\mathrm{i}c_x\sqrt{\rho_xk}Z_{n,x} \!-\! \mathrm{i}c_y\sqrt{\rho_yk}\widetilde{Z}_{n,y} \!+\! b_x\rho_xk(Z_{n,x}^2-1) \!+\! b_y\rho_yk(\widetilde{Z}_{n,y}^2-1) \!+\! d\sqrt{\rho_x\rho_y}kZ_{n,x}\widetilde{Z}_{n,y}}{1-k(a_x+a_y)} \right|^2 \!\!<\! 1.
\end{equation}
This is equivalent to
\begin{eqnarray}\label{eq_stability2}
\nonumber
&&\!\!\!\mathbb{E}\!\left[\! \left(\! 1 \!+\! b_x\rho_xk(Z_{n,x}^2\!\!-\!1) \!+\! b_y\rho_yk(\widetilde{Z}_{n,y}^2\!\!-\!1) \!+\! d\sqrt{\rho_x\rho_y}kZ_{n,x}\widetilde{Z}_{n,y} \right)^2 \!\!+\! \left( c_x\sqrt{\rho_xk}Z_{n,x} \!+\! c_y\sqrt{\rho_yk}\widetilde{Z}_{n,y} \right)^2 \right]\\
&<& \!\!\!\!\left(1-k(a_x+a_y)\right)^2.
\end{eqnarray}

Note that $\widetilde{Z}_n^y = \rho_{xy}Z_n^x + \sqrt{1-\rho_{xy}^2}Z_n^y,$  with $Z_n^x,Z_n^y\sim N(0,1)$ being independent normal random variables, hence
\[
\mathbb{E}\big[Z_{n,x}^2\widetilde{Z}_{n,y}^2\big] = 1+2\rho_{xy}^2,\quad \mathbb{E}\big[Z_{n,x}\widetilde{Z}_{n,y}\big] = \rho_{xy},\quad \mathbb{E}\big[Z_{n,x}^3\widetilde{Z}_{n,y}\big] = \mathbb{E}\big[Z_{n,x}\widetilde{Z}_{n,y}^3\big] = 3\rho_{xy}\;,
\]
and
\begin{align*}
&\mathbb{E}\left[ \left( 1 + b_x\rho_xk(Z_{n,x}^2-1) + b_y\rho_yk(\widetilde{Z}_{n,y}^2-1) + d\sqrt{\rho_x\rho_y}kZ_{n,x}\widetilde{Z}_{n,y} \right)^2\right]\\[5pt]
=& 1 + b_x^2\rho_x^2k^2\,\mathbb{E}\big[(Z_{n,x}^2-1)^2\big]
 + b_y^2\rho_y^2k^2\,\mathbb{E}\big[(\widetilde{Z}_{n,y}^2-1)^2\big]
 + d^2\rho_x\rho_yk^2\,\mathbb{E}\big[Z_{n,x}^2\widetilde{Z}_{n,y}^2\big]
 + 2b_x\rho_xk\,\mathbb{E}\big[Z_{n,x}^2-1\big]\\[5pt]
 & + 2b_y\rho_yk\,\mathbb{E}\big[\widetilde{Z}_{n,y}^2-1\big]
 + 2d\sqrt{\rho_x\rho_y}k\,\mathbb{E}\big[Z_{n,x}\widetilde{Z}_{n,y}\big]
+ 2b_xb_y\rho_x\rho_yk^2\,\mathbb{E}\big[(Z_{n,x}^2-1)(\widetilde{Z}_{n,y}^2-1) \big]\\[5pt]
&+ 2b_xd\rho_x\sqrt{\rho_x\rho_y}k^2\,\mathbb{E}\big[(Z_{n,x}^2-1)Z_{n,x}\widetilde{Z}_{n,y} \big]
+ 2b_yd\rho_y\sqrt{\rho_x\rho_y}k^2\,\mathbb{E}\big[(\widetilde{Z}_{n,y}^2-1)Z_{n,x}\widetilde{Z}_{n,y} \big]\\[5pt]
=& 1 + 2b_x^2\rho_x^2k^2 + 2b_y^2\rho_y^2k^2 
+ d^2\rho_x\rho_y(1+2\rho_{xy}^2)k^2
+ 2d\sqrt{\rho_x\rho_y}\rho_{xy}k
+ 4b_xb_y\rho_x\rho_y\rho_{xy}^2k^2\\[5pt]
&+ 4b_xd\rho_x\sqrt{\rho_x\rho_y}\rho_{xy}k^2
+ 4b_yd\rho_y\sqrt{\rho_x\rho_y}\rho_{xy}k^2,\\[5pt]
\text{and}&\\
&\mathbb{E}\left[ \left( c_x\sqrt{\rho_xk}Z_{n,x} + c_y\sqrt{\rho_yk}\widetilde{Z}_{n,y} \right)^2\right]
= c_x^2\rho_xk + c_y^2\rho_yk + 2c_xc_y\sqrt{\rho_x\rho_y}\rho_{xy}k.
\end{align*}

Note that $c_xc_y + d=0,\ 4b_xb_y = d^2$, and
\begin{align*}
b_x &= \cos^2\frac{\xi h_x}{2} a_x,\quad 
b_y = \cos^2\frac{\eta h_y}{2}a_y,\quad 
c_x^2 = -2\cos^2\frac{\xi h_x}{2} a_x,\\
c_y^2 &= -2\cos^2\frac{\eta h_y}{2}a_y,\quad 
d^2 = 4 \cos^2\frac{\xi h_x}{2}\cos^2\frac{\eta h_y}{2} a_x a_y.
\end{align*}
Therefore
\begin{align*}
&\mathbb{E}\left[ \Big( 1 + b_x\rho_xk(Z_{n,x}^2-1) + b_y\rho_yk(\widetilde{Z}_{n,y}^2-1) + d\sqrt{\rho_x\rho_y}kZ_{n,x}\widetilde{Z}_{n,y} \Big)^2 \!\!+\! \Big( c_x\sqrt{\rho_xk}Z_{n,x} + c_y\sqrt{\rho_yk}\widetilde{Z}_{n,y} \Big)^2 \right]\\
=& 1 + 2\cos^4\frac{\xi h_x}{2} a_x^2\rho_x^2k^2 + 2\cos^4\frac{\eta h_y}{2} a_y^2\rho_y^2k^2 + 
4 \cos^2\frac{\xi h_x}{2}\cos^2\frac{\eta h_y}{2}\rho_x\rho_y(1+3\rho_{xy}^2) a_x a_y k^2\\[5pt]
&\quad + 4d\sqrt{\rho_x\rho_y}\rho_{xy}k^2\big(\cos^2\frac{\xi h_x}{2} a_x\rho_x + \cos^2\frac{\eta h_y}{2}a_y\rho_y\big) -2\cos^2\frac{\xi h_x}{2}a_x \rho_xk -2\cos^2\frac{\eta h_y}{2}a_y\rho_y k\\[5pt]
\leq & 1 + 2\cos^4\frac{\xi h_x}{2} a_x^2\rho_x^2k^2 + 2\cos^4\frac{\eta h_y}{2} a_y^2\rho_y^2k^2 + 
4 \cos^2\frac{\xi h_x}{2}\cos^2\frac{\eta h_y}{2}\rho_x\rho_y(1+3\rho_{xy}^2) a_x a_y k^2\\[5pt]
&\quad +4\vert \rho_{xy}\vert k^2 \big(\cos^2\frac{\xi h_x}{2} a_x\rho_x + \cos^2\frac{\eta h_y}{2}a_y\rho_y\big)^2-2\cos^2\frac{\xi h_x}{2}a_x \rho_xk -2\cos^2\frac{\eta h_y}{2}a_y\rho_y k\\[5pt]
\leq & 1 + 2\rho_x^2(1+2\vert\rho_{xy}\vert)a_x^2k^2 + 2\rho_y^2(1+2\vert\rho_{xy}\vert)a_y^2k^2 + 4 \rho_x\rho_y(3\rho_{xy}^2 + 2\vert\rho_{xy}\vert + 1) a_x a_y k^2\\
&\quad -2\cos^2\frac{\xi h_x}{2}a_x \rho_xk -2\cos^2\frac{\eta h_y}{2}a_y\rho_y k,
\end{align*}
and
$$\left(1-k(a_x+a_y)\right)^2 = 1 + k^2a_x^2 - 2ka_x + k^2a_y^2 - 2ka_y + 2k^2a_xa_y. $$

One sufficient condition for \eqref{eq_stability2} to hold is
\begin{align*}
2\rho_x^2(1+2\vert\rho_{xy}\vert)a_x^2k^2 - 2\cos^2\frac{\xi h_x}{2}a_x \rho_xk &< k^2a_x^2 - 2ka_x,\\
2\rho_y^2(1+2\vert\rho_{xy}\vert)a_y^2k^2 -2\cos^2\frac{\eta h_y}{2}a_y\rho_y k &< k^2a_y^2 - 2ka_y,\\
4 \rho_x\rho_y(3\rho_{xy}^2 + 2\vert\rho_{xy}\vert + 1) a_x a_y k^2 &\leq 2k^2a_xa_y.
\end{align*}

Replacing $a_x,a_y$ with their expressions in \eqref{eq_abc}, the above is equivalent to
\begin{align*}
\frac{k}{h_x^2}\Big(2\rho_x^2(1+2|\rho_{xy}|)-1\Big) \sin^2\frac{\xi h_x}{2}+ \rho_{x}\cos^2 \frac{\xi h_x}{2} &< 1,\\
\frac{k}{h_y^2}\Big(2\rho_y^2(1+2|\rho_{xy}|)-1\Big) \sin^2\frac{\eta h_y}{2}+ \rho_{y}\cos^2 \frac{\eta h_y}{2} &< 1,\\
2 \rho_x\rho_y(3\rho_{xy}^2 + 2\vert\rho_{xy}\vert + 1) &\leq 1.
\end{align*}

A sufficient condition for this is that $\rho_x,\rho_y,\rho_{xy}$ satisfy \eqref{eq_stablerhos},
then $\mathbb{E}|X_{n+1}|^2< \mathbb{E}|X_n|^2$ and stability holds.


\end{proof}


Now we prove 
the stability of the ADI scheme.

\begin{proposition}\label{prop_ADIstability}
Under Assumption \ref{ass-corr}, 
mean-square stability (\ref{eqn:mss}) also holds for the ADI scheme~\eqref{eq_ADIdifference}.
\end{proposition}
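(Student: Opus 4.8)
The plan is to rerun the von Neumann computation from the proof of Theorem~\ref{thm_2dstability}, exploiting the fact that the ADI factorisation alters only the implicit left-hand side of the scheme and leaves the explicit Milstein right-hand side untouched. As throughout the stability analysis I take $\mu_x=\mu_y=0$ and insert the ansatz \eqref{eq_ansatzXn} into \eqref{eq_ADIdifference}. Since the right-hand operator of \eqref{eq_ADIdifference} coincides with that of \eqref{eq_2DimplicitMilstein}, the numerator of the resulting amplification factor is \emph{exactly} the random expression appearing in \eqref{eq_Cn1}; only the denominator changes. Indeed, with $a_x,a_y$ as in \eqref{eq_abc}, the operators $\frac{k}{2h_x^2}D_{xx}$ and $\frac{k}{2h_y^2}D_{yy}$ have Fourier symbols $ka_x$ and $ka_y$, so the factorised operator $\big(I-\frac{k}{2h_x^2}D_{xx}\big)\big(I-\frac{k}{2h_y^2}D_{yy}\big)$ has symbol $(1-ka_x)(1-ka_y)$ in place of the symbol $1-k(a_x+a_y)$ of the unfactorised implicit operator. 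Hence the ADI recursion reads
\begin{equation*}
X_{n+1}(\xi,\eta)=\frac{1}{(1-ka_x)(1-ka_y)}\Big(1-\mathrm{i}c_x\sqrt{\rho_xk}Z_{n,x}-\cdots+d\sqrt{\rho_x\rho_y}kZ_{n,x}\widetilde{Z}_{n,y}\Big)X_n(\xi,\eta),
\end{equation*}
with the same numerator as in \eqref{eq_Cn1}.

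By the multiplicative, time-homogeneous structure of this recursion (exactly as invoked in the proof of Theorem~\ref{thm_2dstability}), mean-square stability \eqref{eqn:mss} is equivalent to $\mathbb{E}|X_{n+1}|^2<\mathbb{E}|X_n|^2$ for every $(\xi,\eta)$, i.e.\ to
\begin{equation*}
\mathbb{E}\big[\,|\,\text{numerator}\,|^2\,\big]<\big((1-ka_x)(1-ka_y)\big)^2,
\end{equation*}
which differs from the condition \eqref{eq_stability2} only on its right-hand side. The key point is that the ADI denominator dominates the implicit one. Since $a_x,a_y\leq0$ by \eqref{eq_abc}, I write $(1-ka_x)(1-ka_y)=A+B$ with $A:=1-k(a_x+a_y)\geq1>0$ and $B:=k^2a_xa_y\geq0$, whence
\begin{equation*}
\big((1-ka_x)(1-ka_y)\big)^2=A^2+2AB+B^2\geq A^2=\big(1-k(a_x+a_y)\big)^2.
\end{equation*}

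I would then conclude directly from the estimate already established inside the proof of Theorem~\ref{thm_2dstability}, namely that under Assumption~\ref{ass-corr} the numerator obeys $\mathbb{E}[\,|\,\text{numerator}\,|^2\,]<\big(1-k(a_x+a_y)\big)^2$ for every $(\xi,\eta)$. Chaining this with the displayed inequality gives $\mathbb{E}[\,|\,\text{numerator}\,|^2\,]<\big(1-k(a_x+a_y)\big)^2\leq\big((1-ka_x)(1-ka_y)\big)^2$, so that $\mathbb{E}|X_{n+1}|^2<\mathbb{E}|X_n|^2$ and, upon iterating, \eqref{eqn:mss} follows. There is no genuine obstacle here: the argument reduces to the observation that the numerator is identical and that factorising the implicit operator can only enlarge the denominator. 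The only care needed is the sign bookkeeping $a_x,a_y\leq0$, which ensures $A,B\geq0$ and hence that the cross term $2AB$ carries the favourable sign; in fact $2AB+B^2\geq0$ shows the ADI scheme is mean-square stable on a (weakly) larger parameter set than the unfactorised scheme, so Assumption~\ref{ass-corr} suffices a fortiori.
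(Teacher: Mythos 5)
Your argument is correct and is essentially the paper's own proof: the paper likewise observes that the ADI scheme has the same numerator as \eqref{eq_Cn1} and concludes from $|1-(a_x+a_y)k|\leq|(1-a_xk)(1-a_yk)|$ for $a_x,a_y\le 0$, which is exactly the inequality you verify explicitly via the decomposition $A=1-k(a_x+a_y)$, $B=k^2a_xa_y\ge 0$. Your write-up merely spells out the sign bookkeeping the paper leaves implicit, so there is nothing to add.
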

\begin{proof}
By insertion in \eqref{eq_ADIdifference}, we have
\begin{equation}\label{eq_CnADI}
\begin{aligned}
X_{n+1}(\xi,\eta) &= \frac{1}{(1-a_xk)(1-a_yk)}\bigg(1 -\mathrm{i}c_x\sqrt{\rho_xk}Z_{n,x} -\mathrm{i}c_y\sqrt{\rho_yk}\widetilde{Z}_{n,y}\\
&\quad + b_x\rho_xk(Z_{n,x}^2-1) + b_y\rho_yk(\widetilde{Z}_{n,y}^2-1) + d\sqrt{\rho_x\rho_y}kZ_{n,x}\widetilde{Z}_{n,y}\bigg)X_n(\xi,\eta).
\end{aligned}
\end{equation}
We need $\mathbb{E}|X_{n+1}|^2< \mathbb{E}|X_n|^2$.
Since $|1-(a_x+a_y)k|\leq |(1-a_xk)(1-a_y k)|$ for $a_x,a_y\le 0$, the stability also holds for the ADI scheme.
\end{proof}

\begin{proposition}\label{prop_ExplicitStability}
The explicit Milstein (finite difference) scheme \eqref{eq_2DexplicitMilstein} is stable in the mean-square sense  provided
\begin{subequations}\label{eq_ExplicitStability}
\begin{align}
\frac{k}{h_x^2} &\leq \big(2+2\rho_x^2 + 2\rho_x\rho_y +\big(3\rho_x+\rho_y+4\rho_x^2 +4\rho_x\rho_y\big)|\rho_{xy}| + 6\rho_x\rho_y\rho_{xy}^2 \big)^{-1},\label{eq_ExplicitStability1}\\
\frac{k}{h_y^2} &\leq \big( 2+2\rho_y^2 + 2\rho_x\rho_y +\big(\rho_x+3\rho_y+4\rho_y^2 +4\rho_x\rho_y\big)|\rho_{xy}| + 6\rho_x\rho_y\rho_{xy}^2\big)^{-1}.\label{eq_ExplicitStability2}
\end{align}
\end{subequations}
\end{proposition}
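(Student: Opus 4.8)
The plan is to mirror the Fourier (von Neumann) analysis of the proof of Theorem~\ref{thm_2dstability}, exploiting that the explicit scheme \eqref{eq_2DexplicitMilstein} differs from the implicit one only in its treatment of the elliptic term. Setting $\mu_x=\mu_y=0$ and inserting the ansatz $\widetilde V^n=X_n(\xi,\eta)$ into \eqref{eq_2DexplicitMilstein}, I would read off, with the symbols $a_x,a_y,b_x,b_y,c_x,c_y,d$ of \eqref{eq_abc}, the one-step amplification
\[
X_{n+1} = \Bigl(1+(a_x+a_y)k -\mathrm ic_x\sqrt{\rho_xk}Z_{n,x}-\mathrm ic_y\sqrt{\rho_yk}\widetilde Z_{n,y}+b_x\rho_xk(Z_{n,x}^2-1)+b_y\rho_yk(\widetilde Z_{n,y}^2-1)+d\sqrt{\rho_x\rho_y}kZ_{n,x}\widetilde Z_{n,y}\Bigr)X_n.
\]
This is exactly the random factor in \eqref{eq_Cn1}, \emph{except} that the implicit prefactor $1/(1-(a_x+a_y)k)$ is replaced by adding $(a_x+a_y)k$ to the deterministic real part. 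By the multiplicative and time-homogeneous structure, mean-square stability \eqref{eqn:mss} again reduces to $\mathbb E|X_{n+1}|^2<\mathbb E|X_n|^2$, i.e.\ to $\mathbb E\lvert g\rvert^2<1$ for the bracketed factor $g$.

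The key observation is that $\mathbb E\lvert g\rvert^2$ equals the quantity already computed for the implicit numerator in the proof of Theorem~\ref{thm_2dstability}, plus the two corrections from the modified deterministic part,
\[
\mathbb E\lvert g\rvert^2 = \mathbb E\bigl\lvert\text{(implicit numerator)}\bigr\rvert^2 + 2(a_x+a_y)k\bigl(1+d\sqrt{\rho_x\rho_y}\rho_{xy}k\bigr) + (a_x+a_y)^2k^2,
\]
using $\mathbb E[Z_{n,x}\widetilde Z_{n,y}]=\rho_{xy}$ for the mean of the random real part. I would reuse the moment identities and the relations $c_xc_y+d=0$, $4b_xb_y=d^2$, $b_x=\cos^2\tfrac{\xi h_x}{2}a_x$, $c_x^2=-2\cos^2\tfrac{\xi h_x}{2}a_x$ (and their $y$-analogues) established there. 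After cancelling the order-$k$ cross term against $2c_xc_y\sqrt{\rho_x\rho_y}\rho_{xy}k$ and dividing by $k>0$, the requirement $\mathbb E\lvert g\rvert^2<1$ becomes
\[
\underbrace{2(a_x+a_y)-2\cos^2\tfrac{\xi h_x}{2}a_x\rho_x-2\cos^2\tfrac{\eta h_y}{2}a_y\rho_y}_{\le\,0,\ \text{order }1/h^2} + k\,\underbrace{\bigl[(a_x+a_y)^2+2(a_x+a_y)d\sqrt{\rho_x\rho_y}\rho_{xy}+\Sigma\bigr]}_{\ge\,0,\ \text{order }1/h^4} < 0,
\]
where $\Sigma$ collects the second-moment terms $2b_x^2\rho_x^2+2b_y^2\rho_y^2+d^2\rho_x\rho_y(1+3\rho_{xy}^2)+4b_xd\rho_x\sqrt{\rho_x\rho_y}\rho_{xy}+4b_yd\rho_y\sqrt{\rho_x\rho_y}\rho_{xy}$.

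The essential difference from the implicit case is now visible. There the implicit denominator supplied a matching $-2(a_x+a_y)k+(a_x+a_y)^2k^2$ on the right, so that all $O(k^2/h^4)$ terms were absorbed and only the $\rho$-conditions of Assumption~\ref{ass-corr} survived. Here no such compensation exists, and the positive $O(1/h^4)$ bracket (which now additionally contains $(a_x+a_y)^2$) must be dominated by the negative $O(1/h^2)$ stabiliser $2\lvert a_x\rvert(1-\rho_x\cos^2\tfrac{\xi h_x}{2})+2\lvert a_y\rvert(1-\rho_y\cos^2\tfrac{\eta h_y}{2})$. Since the bracket scales like $k\lvert a_x\rvert^2\sim k/h_x^4$ against a stabiliser $\sim 1/h_x^2$, this forces a genuine timestep restriction $k/h_x^2,\,k/h_y^2=O(1)$, namely the CFL bounds \eqref{eq_ExplicitStability}. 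Concretely, I would bound the bracket using $\cos^2\le1$, $\lvert b_x\rvert\le\lvert a_x\rvert$, $\lvert d\rvert\le 2\sqrt{\lvert a_x\rvert\lvert a_y\rvert}$ (from $c_x^2\le2\lvert a_x\rvert$), and Young's inequality with $\lvert\rho_{xy}\rvert$-weights to convert the $d$-linear and $\sqrt{\rho_x\rho_y}$ products into $\rho$-weighted multiples of $a_x^2$, $a_y^2$ and $a_xa_y$, exactly as the implicit proof bounds $4d\sqrt{\rho_x\rho_y}\rho_{xy}(\dots)$ by $4\lvert\rho_{xy}\rvert(\dots)^2$.

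The main obstacle, and the only genuinely new work beyond Theorem~\ref{thm_2dstability}, is the treatment of the coupling terms $a_xa_y$ and the mixed symbol $d$: because $a_xa_y\sim1/(h_x^2h_y^2)$ mixes the two mesh scales, one must decide how much of each cross term is charged against the $x$-stabiliser and how much against the $y$-stabiliser — a term $k\,a_xa_y$ compared against $2\lvert a_y\rvert(1-\rho_y\cos^2\tfrac{\eta h_y}{2})$ yields a bound on $k\lvert a_x\rvert\sim k/h_x^2$, and symmetrically. Choosing this split and then taking the worst case over wave numbers ($\xi h_x,\eta h_y\to\pi$, where $\cos^2\to0$ and $\lvert a_x\rvert\to2/h_x^2$) is what collapses the single coupled inequality into the two separate one-directional conditions \eqref{eq_ExplicitStability1} and \eqref{eq_ExplicitStability2}. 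Consistently with the footnote to Assumption~\ref{ass-corr}, I expect the resulting conditions to be sufficient but not sharp, the slack entering precisely through these cross-term bounds.
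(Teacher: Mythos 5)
Your proposal is correct and follows essentially the same route as the paper's proof: a von Neumann computation of $\mathbb{E}|g|^2$ for the explicit amplification factor (your decomposition into the implicit numerator's second moment plus the corrections $2(a_x+a_y)k\bigl(1+d\sqrt{\rho_x\rho_y}\rho_{xy}k\bigr)+(a_x+a_y)^2k^2$ reproduces exactly the paper's expanded moment expression), followed by Young/AM--GM bounds with $|\rho_{xy}|$-weights and the symmetric split $2|a_x||a_y|\le a_x^2+a_y^2$ of the cross terms, which is precisely how the paper absorbs the coupled $u^2v^2$ contributions into the two one-directional brackets and reads off \eqref{eq_ExplicitStability} at the worst-case wave numbers $u,v\to 1$. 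You also correctly identify both the structural reason the explicit scheme needs a CFL restriction (no implicit denominator to absorb the $O(k^2/h^4)$ terms) and the non-sharpness of the resulting conditions.
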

\begin{proof}
To ensure $L_2$ stability in this case, 
we need
\[
\mathbb{E}\bigg|1 + (a_x+a_y)k -\mathrm{i}c_x\sqrt{\rho_xk}Z_{n,x} -\mathrm{i}c_y\sqrt{\rho_yk}\widetilde{Z}_{n,y} + b_x\rho_xk(Z_{n,x}^2-1)+ b_y\rho_yk(\widetilde{Z}_{n,y}^2-1) + d\sqrt{\rho_x\rho_y}kZ_{n,x}\widetilde{Z}_{n,y}\bigg|^2\!\!\! < 1.
\]
For simplicity, we denote $u = |\sin \frac{\xi h_x}{2}|$, $v = |\sin\frac{\eta h_y}{2}|$, then we have
\begin{align*}
&\mathbb{E}\left|1 + (a_x+a_y)k -\mathrm{i}c_x\sqrt{\rho_xk}Z_{n,x} -\mathrm{i}c_y\sqrt{\rho_yk}\widetilde{Z}_{n,y} + b_x\rho_xk(Z_{n,x}^2-1) + b_y\rho_yk(\widetilde{Z}_{n,y}^2-1) + d\sqrt{\rho_x\rho_y}kZ_{n,x}\widetilde{Z}_{n,y}\right|^2 \\[4pt]
=& 1 + \big(a_x^2k^2 + 2a_xk + 2b_x^2\rho_x^2k^2 + c_x^2\rho_xk\big) + \big(a_y^2k^2 + 2a_yk + 2b_y^2\rho_y^2k^2 + c_y^2\rho_yk\big) \\[4pt]
&\quad + \big(2a_xa_yk^2 + \rho_x\rho_y(1+3\rho_{xy}^2)d^2k^2\big) + 2d(a_x+2b_x\rho_x + a_y+2b_y\rho_y)\sqrt{\rho_x\rho_y}\rho_{xy}k^2\\[4pt]
\leq & 1- 4\frac{k}{h_x^2}u^2\bigg(1-\rho_x(1-u^2)-\frac{k}{h_x^2}u^2(1+2\rho_x^2)\bigg) - 4\frac{k}{h_y^2}v^2\bigg( 1-\rho_y(1-v^2)-\frac{k}{h_y^2}v^2(1+2\rho_y^2) \bigg)\\[4pt]
&\ +8\frac{k^2}{h_x^2h_y^2}u^2v^2\bigg(1+2\rho_x\rho_y(1+3\rho_{xy}^2)\bigg) + 8|\rho_{xy}|\bigg(\frac{k}{h_x^2}u^2\rho_x + \frac{k}{h_y^2}v^2\rho_y\bigg)\bigg(\frac{k}{h_x^2}u^2(1+2\rho_x) + \frac{k}{h_y^2}v^2(1+2\rho_y)\bigg)\\[4pt]
\leq & 1- 4\frac{k}{h_x^2}u^2\bigg[1-\rho_x(1-u^2)-\frac{k}{h_x^2}u^2\bigg(2+2\rho_x^2 + 2\rho_x\rho_y +\big(3\rho_x+\rho_y+4\rho_x^2 +4\rho_x\rho_y\big)|\rho_{xy}| + 6\rho_x\rho_y\rho_{xy}^2 \bigg)\bigg]\\[4pt]
&\ - 4\frac{k}{h_y^2}v^2\bigg[ 1-\rho_y(1-v^2)-\frac{k}{h_y^2}v^2\bigg( 2+2\rho_y^2 + 2\rho_x\rho_y +\big(\rho_x+3\rho_y+4\rho_y^2 +4\rho_x\rho_y\big)|\rho_{xy}| + 6\rho_x\rho_y\rho_{xy}^2\bigg) \bigg]\\[4pt]
< & 1,\qquad\text{for all }0\leq u,v < 1.
\end{align*}
This leads to the two sufficient conditions in \eqref{eq_ExplicitStability}.
\end{proof}

It follows that if $\rho_{xy}=0$, the stability conditions are
\[
\frac{k}{h_x^2}\leq (2+2\rho_x^2+2\rho_x\rho_y)^{-1},\qquad \frac{k}{h_y^2}\leq (2+2\rho_y^2+2\rho_x\rho_y)^{-1}.
\]
So it is sufficient that $k/h_x^2\leq 1/6$, and $k/h_y^2\leq 1/6$. If $|\rho_{xy}|=1$, which is the worst case in \eqref{eq_ExplicitStability}, the stability conditions are
\[
\frac{k}{h_x^2}\leq (2+3\rho_x+\rho_y+6\rho_x^2+12\rho_x\rho_y)^{-1},\quad \frac{k}{h_y^2}\leq (2+\rho_x+3\rho_y+6\rho_y^2+12\rho_x\rho_y)^{-1}.
\]
So it is sufficient to ensure $k/h_x^2\leq 1/24$, and $k/h_y^2\leq 1/24$.

\section{Fourier analysis of \texorpdfstring{$L_2$}{L2}-convergence}\label{sec_2dMeanSquareConvergence}


Extending the analysis in \cite{ref3} for the standard 1D (deterministic) heat equation to our 2D SPDE, we compare the numerical solution to the exact solution in Fourier space first by splitting the Fourier domain into two wave number regions. Assume $p$ is a constant satisfying $0<p<\frac{1}{4}$. Then we define the low wave number region by
\begin{equation}\label{eq_2dOmegaLow}
\Omega_{\text{low}} = \big\{(\xi,\eta):\vert\xi\vert\leq\min\{h_x^{-2p},\,k^{-p}\}\text{ and } \vert\eta\vert\leq\min\{h_y^{-2p},\,k^{-p}\}\big\},
\end{equation}
and the high wave number region by
\begin{equation}\label{eq_2dOmegaHigh}
\!\Omega_{\text{high}} \!=\! \big\{\!(\xi,\eta)\!:\!\vert\xi\vert\!>\min\{h_x^{-2p},\,k^{-p}\}\!\text{ or } \vert\eta\vert>\min\{h_y^{-2p},\,k^{-p}\}\!\big\}\!\cap [-\pi h_x^{-1},\pi h_x^{-1}]\times[-\pi h_y^{-1}\!,\pi h_y^{-1}].\!
\end{equation}
{
Note that both $X_n$ and $X(nk)$ are functions of $\xi$ and $\eta$.
The idea of the convergence proof is that $X_n$ is a good approximation to $X(nk)$ in the low wave region, and they both damp exponentially in the high wave region. }

\begin{lemma}\label{lem_2dmeansquare_low}
For $(\xi,\eta)\in\Omega_{\text{low}}$, we have
\[
X_N - X(T) = X(T)\cdot\Big(h_x^2f_1(\xi) + h_y^2\,f_2(\eta) + k\,f_3(\xi,\eta) + o(k,h_x^2,h_y^2\,)\cdot\varphi(T,h_x\xi,h_y\eta)\Big),
\]
where $f_1(\xi),f_2(\eta),f_3(\xi,\eta),\varphi(T,h_x\xi,h_y\eta)$ are random variables 
such that after multiplication by $X(T)$, the integral over $\Omega_{\text{low}}$ has bounded first and second moments independent of $N$.
\end{lemma}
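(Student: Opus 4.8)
Since $X_0=X(0)=1$, both the numerical amplitude $X_N$ and the exact amplitude $X(T)$ are products over the $N$ time steps, driven by the \emph{same} increments $\Delta M_n^x=\sqrt{k}\,Z_{n,x}$, $\Delta M_n^y=\sqrt{k}\,\widetilde Z_{n,y}$. From \eqref{eq_Cn1} the numerical one-step factor is $R_n^{\mathrm{num}}=X_{n+1}/X_n=(1+A_n)/(1-(a_x+a_y)k)$, where $A_n$ is the random bracket in \eqref{eq_Cn1} minus $1$, while from \eqref{eq_solXn} the exact factor is $R_n^{\mathrm{ex}}=\exp\!\big({-}\tfrac12(1-\rho_x)\xi^2k-\tfrac12(1-\rho_y)\eta^2k-\mathrm{i}\xi\sqrt{\rho_xk}Z_{n,x}-\mathrm{i}\eta\sqrt{\rho_yk}\widetilde Z_{n,y}\big)$. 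The plan is to write $X_N=X(T)\exp\big(\sum_{n=0}^{N-1}L_n\big)$ with $L_n:=\log R_n^{\mathrm{num}}-\log R_n^{\mathrm{ex}}$ and expand. The role of $\Omega_{\mathrm{low}}$ in \eqref{eq_2dOmegaLow} is precisely that $|\xi|h_x,|\eta|h_y\lesssim\widehat h^{\,1-2p}$ and $\xi^2k,\eta^2k\lesssim k^{1-2p}$ there, so all symbols in \eqref{eq_abc} and all exponentials admit Taylor expansions with remainders bounded by the next term, uniformly on $\Omega_{\mathrm{low}}$.

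\textbf{Per-step difference.} Writing $A_n=\alpha+\beta$ with $\alpha:=-\mathrm{i}c_x\sqrt{\rho_xk}Z_{n,x}-\mathrm{i}c_y\sqrt{\rho_yk}\widetilde Z_{n,y}=O(\sqrt k)$ and $\beta:=b_x\rho_xk(Z_{n,x}^2-1)+b_y\rho_yk(\widetilde Z_{n,y}^2-1)+d\sqrt{\rho_x\rho_y}kZ_{n,x}\widetilde Z_{n,y}=O(k)$, I would expand $\log(1+A_n)=\alpha+\beta-\tfrac12\alpha^2+O(k^{3/2})$. The key point is the \emph{Milstein cancellation}: using $c_xc_y+d=0$ the mixed $Z_{n,x}\widetilde Z_{n,y}$ terms of $\beta$ and of $-\tfrac12\alpha^2$ cancel, and using $c_x^2=-2b_x$ the squared-noise terms collapse to the deterministic value $-\rho_xb_x-\rho_yb_y$ (times $k$). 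Hence $\log R_n^{\mathrm{num}}=\alpha+[(a_x+a_y)-\rho_xb_x-\rho_yb_y]k+\tfrac12(a_x+a_y)^2k^2+\mu_n$ with $\mu_n$ mean-zero and $O(k^{3/2})$. Subtracting $\log R_n^{\mathrm{ex}}$ and inserting $c_x-\xi=-\tfrac16\xi^3h_x^2+\cdots$, $a_x=-\tfrac12\xi^2+\tfrac1{24}\xi^4h_x^2+\cdots$, $b_x=-\tfrac12\xi^2+\tfrac16\xi^4h_x^2+\cdots$, the $O(\xi^2)$ drift terms cancel by consistency, leaving $L_n=-\mathrm{i}(c_x-\xi)\sqrt{\rho_xk}Z_{n,x}-\mathrm{i}(c_y-\eta)\sqrt{\rho_yk}\widetilde Z_{n,y}+\big[(\tfrac1{24}-\tfrac{\rho_x}{6})\xi^4h_x^2+(\tfrac1{24}-\tfrac{\rho_y}{6})\eta^4h_y^2\big]k+\tfrac18(\xi^2+\eta^2)^2k^2+\mu_n+(\text{higher order})$.

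\textbf{Summation and assembly.} Because $c_x-\xi$ and $c_y-\eta$ are \emph{deterministic}, the leading martingale discrepancy telescopes exactly: $\sum_n$ of the first two terms equals $-\mathrm{i}(c_x-\xi)\sqrt{\rho_x}\,M_T^x-\mathrm{i}(c_y-\eta)\sqrt{\rho_y}\,M_T^y=h_x^2\cdot\tfrac{\mathrm{i}}{6}\xi^3\sqrt{\rho_x}M_T^x+h_y^2\cdot\tfrac{\mathrm{i}}{6}\eta^3\sqrt{\rho_y}M_T^y+\cdots$, giving the random parts of $f_1,f_2$; the deterministic drift sums (factor $N=T/k$) to $T(\tfrac1{24}-\tfrac{\rho_x}6)\xi^4h_x^2+T(\tfrac1{24}-\tfrac{\rho_y}6)\eta^4h_y^2+\tfrac T8(\xi^2+\eta^2)^2k$, the deterministic parts of $f_1,f_2,f_3$; and since the $\mu_n$ are independent across $n$ with $\|\mu_n\|_{L_2}=O(k^{3/2})$, one has $\|\sum_n\mu_n\|_{L_2}^2=\sum_nO(k^3)=O(Tk^2)$, i.e.\ order $k$, whose normalisation by $k$ has $N$-independent bounded variance and supplies the random part of $f_3$. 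Finally, expanding $\exp(\sum_nL_n)=1+\sum_nL_n+\tfrac12(\sum_nL_n)^2+\cdots$, the quadratic-and-higher terms are $o(k,h_x^2,h_y^2)$ because $\sum_nL_n=O(h_x^2)+O(h_y^2)+O(k)$ in $L_2$. Crucially $|X(T)|=\exp(-\tfrac12(1-\rho_x)\xi^2T-\tfrac12(1-\rho_y)\eta^2T)$ is deterministic with Gaussian decay while the random phase has modulus one, so every polynomial-in-$(\xi,\eta)$ coefficient is integrable against $|X(T)|$; a Cauchy--Schwarz argument in $\Omega$ then gives the asserted $N$-independent bounds on the first and second moments of the $\Omega_{\mathrm{low}}$-integral of $X(T)$ times each term.

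\textbf{Main obstacle.} The delicate part is twofold. First, keeping the accumulated error at the Milstein order over $N=T/k$ steps: one must prove that the mean-zero $O(k^{3/2})$ remainders contribute only $O(k)$ (not $O(\sqrt k)$) in $L_2$ via the martingale/independence structure, and that the symmetric products arising from $\exp$ are genuinely of lower order with moment bounds uniform in $N$. Second, justifying the logarithm/exponential expansions pathwise, since $Z_{n,x},\widetilde Z_{n,y}$ are unbounded and $|A_n|<1$ can fail; I would handle this by truncating on the large-deviation events $\{|Z_{n,x}|\vee|\widetilde Z_{n,y}|>k^{-\epsilon}\}$ with $0<\epsilon<\tfrac12-p$, whose probability decays faster than any power of $k$ so that their contribution to the moments is negligible.
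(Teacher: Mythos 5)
Your proposal follows essentially the same route as the paper's own proof: writing $X_N = X(T)\exp\big(\sum_{n} e_n\big)$, expanding $\log C_n$ via exactly the Milstein cancellation identities $c_xc_y+d=0$, $b_x+\tfrac12 c_x^2=0$, $b_y+\tfrac12 c_y^2=0$, summing the per-step errors, exponentiating, and controlling the remainders through their odd/even (mean-zero) polynomial structure in $(Z_{n,x},\widetilde Z_{n,y})$ together with the observation that $|X(T)|$ is deterministic with Gaussian decay --- this reproduces the paper's expansion \eqref{eq_logCn} and the moment bounds \eqref{eq_EXTZxZy}, including the identical leading coefficients $\tfrac{\mathrm{i}}{6}\xi^3\sqrt{\rho_x}M_T^x$ and $\tfrac{1-4\rho_x}{24}\xi^4 T$. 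Two minor remarks: your explicit deterministic $O(k)$ coefficient $\tfrac{T}{8}(\xi^2+\eta^2)^2$ is incomplete, since the nonzero means of the per-step $O(k^2)$ terms $-\tfrac12\beta^2$, $\alpha^2\beta$, $\alpha^4$ also contribute to $f_3$ after summation over $N=T/k$ steps (harmless here, as the lemma never requires $f_3$ explicitly and the paper likewise declines to compute it), while your truncation on large-deviation events to justify the pathwise logarithm addresses a point of rigor that the paper's purely formal Taylor expansion leaves implicit.
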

\begin{proof}
See Section \ref{sec_2dlowwave}.
\end{proof}

\begin{lemma}\label{lem_2dmeansquare_high}
Under Assumption \ref{ass-corr},
there exists $C>0$ independent of $h_x$, $h_y$, and $k$, such that
\[
\sqrt{\mathbb{E}\bigg[\,\bigg|\iint_{\Omega_{\text{high}}} X(T,\xi,\eta)-X_N(\xi,\eta)\,\mathrm{d}\xi\mathrm{d}\eta\bigg|^2\,\bigg]} \leq C h_x^{-2}\theta^{N} + C h_y^{-2}\theta^{N},
\]
where $0<\theta<1$ is independent of $h_x$, $h_y$, and $k$. 
\end{lemma}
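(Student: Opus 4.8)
The plan is to push the $L_2(\Omega)$-norm inside the spatial integral and then bound the exact and numerical amplitudes separately, which is the rigorous form of the heuristic that ``both damp exponentially in the high wave region''. First I would apply Minkowski's integral inequality,
\[
\sqrt{\mathbb{E}\Big[\big|\iint_{\Omega_{\text{high}}}(X(T)-X_N)\,\mathrm{d}\xi\,\mathrm{d}\eta\big|^2\Big]}\le \iint_{\Omega_{\text{high}}}\sqrt{\mathbb{E}\big[|X(T)-X_N|^2\big]}\,\mathrm{d}\xi\,\mathrm{d}\eta,
\]
followed by the triangle inequality $\sqrt{\mathbb{E}|X(T)-X_N|^2}\le\sqrt{\mathbb{E}|X(T)|^2}+\sqrt{\mathbb{E}|X_N|^2}$. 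This reduces the claim to showing that $\iint_{\Omega_{\text{high}}}\sqrt{\mathbb{E}|X(T)|^2}\,\mathrm{d}\xi\,\mathrm{d}\eta$ and $\iint_{\Omega_{\text{high}}}\sqrt{\mathbb{E}|X_N|^2}\,\mathrm{d}\xi\,\mathrm{d}\eta$ are each $\le C(h_x^{-2}+h_y^{-2})\theta^{N}$.

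For the exact amplitude I would use that the modulus of \eqref{eq_solXn} is deterministic, so $\sqrt{\mathbb{E}|X(T)|^2}=\exp\!\big(-\tfrac12(1-\rho_x)\xi^2T-\tfrac12(1-\rho_y)\eta^2T\big)$. Writing $m_x=\min\{h_x^{-2p},k^{-p}\}$ and $m_y=\min\{h_y^{-2p},k^{-p}\}$, I would cover $\Omega_{\text{high}}$ by $\{|\xi|>m_x\}\cup\{|\eta|>m_y\}$ and bound the integral over each piece by a one-dimensional Gaussian tail (e.g.\ $\int_{|\xi|>m_x}\mathrm{e}^{-\frac12(1-\rho_x)\xi^2T}\,\mathrm{d}\xi$) times a bounded factor from the remaining variable. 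For the numerical amplitude, the recursion \eqref{eq_Cn1} together with the independence of the increments $Z_{n,x},\widetilde{Z}_{n,y}$ gives $\mathbb{E}|X_N|^2=g^{N}$, where $g=g(\xi,\eta,k)$ is precisely the ratio whose bound by $1$ is the content of Theorem~\ref{thm_2dstability}. The quantitative step is to upgrade the strict inequality $g<1$ to a \emph{uniform} bound $g\le\theta^2<1$ on $\Omega_{\text{high}}$ with $\theta$ independent of $h_x,h_y,k$: reusing the identities $c_xc_y+d=0$, $4b_xb_y=d^2$ and the expressions \eqref{eq_abc} from the proof of Theorem~\ref{thm_2dstability}, I would show that beyond the cutoff the implicit denominator $(1-(a_x+a_y)k)^{-2}$ dominates the Milstein numerator, with the margin governed only by $\rho_x,\rho_y,\rho_{xy}$ through \eqref{eq_stablerhos}. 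Then $\sqrt{\mathbb{E}|X_N|^2}=g^{N/2}\le\theta^{N}$, and since $|\Omega_{\text{high}}|\le 4\pi^2/(h_xh_y)\le 2\pi^2(h_x^{-2}+h_y^{-2})$ by AM–GM, the numerical integral is bounded as required; the same measure estimate converts the exact Gaussian-tail bound into the $h_x^{-2},h_y^{-2}$ form.

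The main obstacle is exactly the uniform damping estimate $g\le\theta^2<1$ on $\Omega_{\text{high}}$ with a constant $\theta$. The difficulty is that $g\to1$ as the wave number tends to $0$, so such a bound can only hold because the high region excludes a neighbourhood of the origin. Making this quantitative amounts to tracking how the lower cutoff forces $\lambda:=-a_xk$ (and its $\eta$-analogue) to be large enough that the large-wavenumber limit of the ratio — in which $g$ approaches the constants controlled by $2\rho_x^2(1+2|\rho_{xy}|)$, $2\rho_y^2(1+2|\rho_{xy}|)$ and $2\rho_x\rho_y(3\rho_{xy}^2+2|\rho_{xy}|+1)$ — takes over, so that the resulting $\theta$ is determined by \eqref{eq_stablerhos}. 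The delicate point is the behaviour just above the cutoff, where $g$ is closest to $1$; this is precisely what the choice of the cutoff exponents in \eqref{eq_2dOmegaLow}–\eqref{eq_2dOmegaHigh} is designed to control, and the same choice must be checked to guarantee that the exact Gaussian tail is likewise dominated by $\theta^{N}$ rather than by the slower decay of $\mathrm{e}^{-c\,m_x^2T}$.
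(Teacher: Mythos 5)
Your overall architecture (Minkowski plus triangle inequality, Gaussian tails for the exact amplitude, the multiplicative identity $\mathbb{E}|X_N|^2=g^N$, and the measure bound $|\Omega_{\text{high}}|\le 4\pi^2/(h_xh_y)$) matches the paper's proof in outline, which uses Cauchy--Schwarz in place of Minkowski to the same effect. However, the step you yourself single out as the main obstacle --- a \emph{uniform} per-step bound $g\le\theta^2<1$ on all of $\Omega_{\text{high}}$ with $\theta$ independent of $h_x,h_y,k$ --- is false, and it fails exactly where you locate the difficulty. Take $\eta=0$, $h_x=h_y=h$, and $|\xi|$ just above the cutoff $\min\{h^{-2p},k^{-p}\}$. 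The paper's estimate (leading to \eqref{eq_2dXN2approx}) gives $g\le 1-4\beta(\lambda d+\lambda^2 d^2)/(1+2\lambda d)^2$ with $\lambda=k/h^2$ and $d=\sin^2\frac{\xi h}{2}+\sin^2\frac{\eta h}{2}$, and at the cutoff $\lambda d\approx k\xi^2/4$, which is $k^{1-2p}/4$ when the cutoff is $k^{-p}$, and at most $h^{2-4p}/4$ when the cutoff is $h^{-2p}$ and $k\le h^2$. Since $0<p<\tfrac14$, in both cases $\lambda d\to 0$ as $h,k\to 0$, hence $g\to 1$: no admissible $\theta$ exists. Nor can the defect be absorbed a posteriori: just above the cutoff $g^N\approx\exp(-\beta T\xi^2)=\exp(-\beta Tk^{-2p})$, which is superpolynomially small but still far \emph{larger} than $\theta^N=\exp(-cT/k)$ for any fixed $\theta<1$, because $k^{-2p}\ll k^{-1}$. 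The same caveat applies to your Gaussian tail $\mathrm{e}^{-c\,m_x^2 T}$, so your closing worry is justified on both counts.

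This is precisely why the paper does not argue with a single uniform contraction over $\Omega_{\text{high}}$. It distinguishes cases: for $k<h^2$ the entire high-wave contribution is shown to be $o(h^r)$ for every $r>0$ via the aggregate ($N$-step) bound $g^N\le\exp\big(-4\beta dTh^{-2}\big)\le\exp\big(-\beta Th^{-4p}\big)$, using $(1-x/N)^N\le \mathrm{e}^{-x}$; for $k\ge h^2$ it splits $\Omega_{\text{high}}$ again into a middle band $\Omega_{\text{high}}^1$ (wavenumbers between the cutoff and $k^{-1/2}$), where only the superpolynomial aggregate bound $\exp(-\beta Tk^{-2p})$ is available, and the truly high band $\Omega_{\text{high}}^2$, where $|\xi|$ or $|\eta|\ge k^{-1/2}$ forces $\lambda d\ge\lambda\sin^2\frac{1}{2\sqrt{\lambda}}\ge\sin^2\frac12$, and only there does one obtain the uniform per-step factor $\theta_0=1-\tfrac12\beta$ and hence the $h^{-4}\theta_0^N$ term that yields the lemma after taking square roots (Lemma \ref{lem_lammage1high}). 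So your mechanism is correct on $\Omega_{\text{high}}^2$, but the proposal has a genuine gap on the band between the low-wave cutoff and $k^{-1/2}$: there the uniform contraction must be replaced by the $N$-step exponential estimate, and that band's contribution bounded separately as a higher-order remainder rather than by $\theta^N$ pointwise.
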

\begin{proof}
See Section \ref{sec_2dhighwave}.
\end{proof}

The following theorem shows mean square convergence of the implicit finite difference scheme \eqref{eq_2DimplicitMilstein}.


\begin{proof}[Theorem \ref{thm_mean-square}]
By Lemma \ref{lem_2dmeansquare_low} and Lemma \ref{lem_2dmeansquare_high}, the inverse Fourier transform gives
\begin{align*}
V_{i,j}^N - v(T,ih_x,jh_y) 
&= \frac{1}{4\pi^2}\iint_{\Omega_{\text{low}}\cup\Omega_{\text{high}}}\big(X_N - X(T)\big)\mathrm{e}^{\mathrm{i}\big((i-i_0)\xi h_x + (j-j_0)\eta h_y\big)}\,\mathrm{d}\xi\,\mathrm{d}\eta + o(k) \\
&=k\,E_1(T,x_i,y_j) + h_x^2\,E_2(T,x_i,y_j) + h_y^2\,E_3(T,x_i,y_j) + \theta^{N}h_x^{-2} \,E_4(T,x_i,y_j)\\
&\quad + \theta^{N}h_y^{-2} \,E_5(T,x_i,y_j) + o(k,h_x^2,h_y^2,\theta^{N}h_x^{-2},\theta^{N}h_y^{-2})\,R(T,x_i,y_j),
\end{align*}
where $x_i = ih_x$, $y_j = jh_y$, $E_1,\ldots,E_5,$ and $R$ are random variables with bounded first and second moments, $N = T/k,\, 0<\theta<1$, and $\theta$ is independent of $h_x$, $h_y$ and $k$.
\end{proof}

{
Next we give a proof of Corollary \ref{cor_L2spaceconvergence}, the $L_2$ convergence in space and probability of the implicit finite difference scheme \eqref{eq_2DimplicitMilstein}.

\begin{proof}[Corollary \ref{cor_L2spaceconvergence}]


We apply Parsevel's theorem to $V_{i,j}^N-v(T,ih_x,jh_y)$ and its Fourier transform. It follows
\begin{align*}
\sum_{i,j}\Big| V_{i,j}^N-v(T,ih_x,jh_y)\Big|^2h_xh_y = \iint |\widetilde{v}(0,\xi,\eta)|^2|X(T,\xi,\eta) - X_N(\xi,\eta)|^2\,\mathrm{d}\xi\,\mathrm{d}\eta + O(h_x^4) + O(h_y^4).
\end{align*}
In Lemma \ref{lem_2dmeansquare_low}, we have proved $|X(T,\xi,\eta) - X_N(\xi,\eta)| = X(T)\big(1 + O(h_x^2) + O(h_y^2) + O(k)\big)$ for $(\xi,\eta)$ in the low wave region. In Lemma \ref{lem_2dmeansquare_high}, we have proved $|X(T,\xi,\eta) - X_N(\xi,\eta)|^2 \leq C \theta^N$, for some $C>0$ and $\theta\in (0,1)$, $(\xi,\eta)$ in the high wave region. 

As for Dirac initial datum, $|\widetilde{v}(0,\xi,\eta)| = 1$, we have
\begin{align*}
&\qquad\iint |X(T,\xi,\eta) - X_N(\xi,\eta)|^2\,\mathrm{d}\xi\,\mathrm{d}\eta \\
&= \iint_{\Omega_{\text{low}}} |X(T,\xi,\eta) - X_N(\xi,\eta)|^2\,\mathrm{d}\xi\,\mathrm{d}\eta + \iint_{\Omega_{\text{high}}} |X(T,\xi,\eta) - X_N(\xi,\eta)|^2\,\mathrm{d}\xi\,\mathrm{d}\eta\\
&= O(h_x^2) + O(h_y^2) + O(k) + O(h_x^{-1}h_y^{-1}\theta^N).
\end{align*}

For initial data in $L_2$, 
\(
\iint |\widetilde{v}(0,\xi,\eta)|^2\,\mathrm{d}\xi\,\mathrm{d}\eta < \infty,
\)
and therefore
\[
\iint_{\Omega_{\text{high}}} |\widetilde{v}(0,\xi,\eta)|^2|X(T,\xi,\eta) - X_N(\xi,\eta)|^2\,\mathrm{d}\xi\,\mathrm{d}\eta 
= o(k^r)\quad\text{for any }r>0,
\]
and consequently
\[
\iint |X(T,\xi,\eta) - X_N(\xi,\eta)|^2\,\mathrm{d}\xi\,\mathrm{d}\eta = O(h_x^2) + O(h_y^2) + O(k).
\]
\end{proof}
}
\subsection{Low wave number region (proof of Lemma \ref{lem_2dmeansquare_low})}\label{sec_2dlowwave}
For the low wave region, we consider the case where both $\xi,\,\eta$ are small. 
It follows from \eqref{eq_solXn} that the exact solution of $X(t_{n+1})$ given $X(t_n)$ is
\begin{equation}
X(t_{n+1}) = X(t_n)\exp\bigg(-\frac{1}{2}(1-\rho_x)\xi^2k -\frac{1}{2}(1-\rho_y)\eta^2k -\mathrm{i}\xi\sqrt{\rho_xk}Z_{n,x} - \mathrm{i}\eta\sqrt{\rho_yk}\widetilde{Z}_{n,y}\bigg),
\end{equation}
where $M_{t_{n+1}}^x-M_{t_n}^x\equiv \sqrt{k}Z_{n,x},\ M_{t_{n+1}}^y-M_{t_n}^y\equiv \sqrt{k}\widetilde{Z}_{n,y}$ are the Brownian increments.

Now we consider $X_n$, the numerical approximation of $X(nk)$. Let
\begin{equation}
X_{n+1} = C_n\,X_n,
\end{equation}
where 
\begin{equation}\label{eq_Cn2}
C_n = \exp\bigg(-\frac{1}{2}(1-\rho_x)\xi^2k -\frac{1}{2}(1-\rho_y)\eta^2k -\mathrm{i}\xi\sqrt{\rho_xk}Z_{n,x} - \mathrm{i}\eta\sqrt{\rho_yk}\widetilde{Z}_{n,y} + e_n\bigg),
\end{equation}
and $e_n$ is the logarithmic error between the numerical solution and the exact solution introduced during $[nk,(n+1)k]$.
Aggregating over $N$ time steps, at $t_N = kN = T$, 
\begin{equation}\label{eq_XN}
X_N = X(T)\exp\bigg(\sum_{n=0}^{N-1}e_n\bigg),
\end{equation}
where 
\[
X(T) = \exp\bigg(-\frac{1}{2}(1-\rho_x)\xi^2T -\frac{1}{2}(1-\rho_y)\eta^2T -\mathrm{i}\xi\sqrt{\rho_xk}\sum_{n=0}^{N-1}Z_{n,x} - \mathrm{i}\eta\sqrt{\rho_yk}\sum_{n=0}^{N-1}\widetilde{Z}_{n,y}\bigg)
\]
is the exact solution at time $T$. 

From \eqref{eq_Cn2}, we have
\[
e_n = \log C_n + \frac{1}{2}(1-\rho_x)\xi^2k + \frac{1}{2}(1-\rho_y)\eta^2k + \mathrm{i}\xi\sqrt{\rho_xk}Z_{n,x} + \mathrm{i}\eta\sqrt{\rho_yk}\widetilde{Z}_{n,y}, 
\]
hence
\begin{equation}
\sum_{n=0}^{N-1}e_n = \sum_{n=0}^{N-1}\log C_n +\frac{1}{2}(1-\rho_x)\xi^2T + \frac{1}{2}(1-\rho_y)\eta^2T + \mathrm{i}\xi\sqrt{\rho_xk}\sum_{n=0}^{N-1}Z_{n,x} + \mathrm{i}\eta\sqrt{\rho_yk}\sum_{n=0}^{N-1}\widetilde{Z}_{n,y}.
\end{equation}

From \eqref{eq_Cn1}, $C_n$ has the form
$$
C_n= \frac{1 -\mathrm{i}c_x\sqrt{\rho_xk}Z_{n,x} -\mathrm{i}c_y\sqrt{\rho_yk}\widetilde{Z}_{n,y} + b_x\rho_xk(Z_{n,x}^2-1) + b_y\rho_yk(\widetilde{Z}_{n,y}^2-1) + d\sqrt{\rho_x\rho_y}kZ_{n,x}\widetilde{Z}_{n,y}}{1-(a_x+a_y)k},
$$
\normalsize
where
\begin{align*}
a_x &= -\frac{\xi^2}{2}\cdot\frac{\sin^2\frac{\xi h_x}{2}}{(\frac{\xi h_x}{2})^2} = -\frac{\xi^2}{2} + \frac{\xi^4}{24}h_x^2 + O(h_x^4\xi^6),\qquad
a_y = -\frac{\eta^2}{2}\cdot\frac{\sin^2\frac{\eta h_y}{2}}{(\frac{\eta h_y}{2})^2}= -\frac{\eta^2}{2} + \frac{\eta^4}{24}h_y^2 + O(h_y^4\eta^6),\\[3pt]
b_x &= -\frac{\xi^2}{2}\cdot\frac{\sin^2\xi h_x}{\xi^2h_x^2}= -\frac{\xi^2}{2} + \frac{\xi^4}{6}h_x^2 + O(h_x^4\xi^6),\qquad 
b_y = -\frac{\eta^2}{2}\cdot\frac{\sin^2\eta h_y}{\eta^2h_y^2}= -\frac{\eta^2}{2} + \frac{\eta^4}{6}h_y^2 + O(h_y^4\eta^6),\\
c_x &= \xi\cdot\frac{\sin \xi h_x}{\xi h_x} = \xi-\frac{\xi^3}{6}h_x^2 + O(h_x^4\xi^5),\qquad\qquad\quad\ 
c_y = \eta\cdot\frac{\sin \eta h_y}{\eta h_y} = \eta - \frac{\eta^3}{6}h_y^2 + O(h_y^4\eta^5),\\
d &= -\xi\eta\cdot\frac{\sin\xi h_x\sin\eta h_y}{\xi h_x\eta h_y}.
\end{align*}
Note that $c_xc_y+d=0,\, b_x+\frac{1}{2}c_x^2=0,\, b_y+\frac{1}{2}c_y^2=0$, then one can derive by Taylor expansion {
(by lengthy, but elementary calculations)},
\begin{equation}\label{eq_logCn}
\begin{aligned}
&\log C_n = -\mathrm{i}c_x\sqrt{\rho_x k}Z_{n,x} - \mathrm{i}c_y\sqrt{\rho_y k}\widetilde{Z}_{n,y} + (a_x+a_y-b_x\rho_x -b_y\rho_y)k + (b_x+\frac{1}{2}c_x^2)\rho_xkZ_{n,x}^2 \\
&\ + (b_y+\frac{1}{2}c_y^2)\rho_yk\widetilde{Z}_{n,y}^2 +(c_xc_y+d)\sqrt{\rho_x\rho_y}kZ_{n,x}\widetilde{Z}_{n,y}+ O\big((|\xi|+|\eta|)^3k\sqrt{k}\big)\cdot\mathrm{i}\phi_1(Z_{n,x},\widetilde{Z}_{n,y})\\
&\  + O\big((|\xi|+|\eta|)^4k^2\big)\cdot \phi_2(Z_{n,x},\widetilde{Z}_{n,y}) + o\big((|\xi|+|\eta|)^4k^2\big)\\
&= -\mathrm{i}c_x\sqrt{\rho_x k}Z_{n,x} - \mathrm{i}c_y\sqrt{\rho_y k}\widetilde{Z}_{n,y} + (a_x+a_y-b_x\rho_x -b_y\rho_y)k \\
&\ + O\big((|\xi|\!+\!|\eta|)^3k\sqrt{k}\big)\!\cdot\! \mathrm{i} \phi_1(Z_{n,x},\widetilde{Z}_{n,y}) \!+\! O\big((|\xi|+|\eta|)^4 k^2\big)\!\cdot\!\phi_2(Z_{n,x},\widetilde{Z}_{n,y})\!+\! o\big((|\xi|+|\eta|)^4k^2\big),
\end{aligned}
\end{equation}
where $\phi_1(\cdot,\cdot)$ is an odd and $\phi_2(\cdot,\cdot)$ an even degree polynomial. Therefore
\begin{align*}
&\sum_{n=0}^{N-1}e_n = \sum_{n=0}^{N-1}\log C_n+\frac{1}{2}(1-\rho_x)\xi^2T + \frac{1}{2}(1-\rho_y)\eta^2T + \mathrm{i}\xi\sqrt{\rho_xk}\sum_{n=0}^{N-1}Z_{n,x} + \mathrm{i}\eta\sqrt{\rho_yk}\sum_{n=0}^{N-1}\widetilde{Z}_{n,y}\\
&= \mathrm{i}(\xi-c_x)\sqrt{\rho_xk}\sum_{n=0}^{N-1}Z_{n,x} + \mathrm{i}(\eta-c_y)\sqrt{\rho_yk}\sum_{n=0}^{N-1}\widetilde{Z}_{n,y} \\
&\quad + \Big(a_x+a_y-b_x\rho_x-b_y\rho_y + \frac{1-\rho_x}{2}\xi^2 + \frac{1-\rho_y}{2}\eta^2\Big)T + O\big((|\xi|+|\eta|)^3k\sqrt{k}\big)\cdot\mathrm{i}\sum_{n=0}^{N-1}\phi_1(Z_{n,x},\widetilde{Z}_{n,y})\\[3pt]
&\quad  + O\big((|\xi|+|\eta|)^4 k^2\big)\cdot\sum_{n=0}^{N-1}\phi_2(Z_{n,x},\widetilde{Z}_{n,y})+  o\big((|\xi|+|\eta|)^4k\big),
\end{align*}
so we have
\begin{align*}
&\!\!\exp\!\left(\sum_{n=0}^{N-1}e_n\!\right)\!\!= \exp\!\left(\!\Big(a_x\!+\!a_y\!-\!b_x\rho_x\!-\!b_y\rho_y \!+\! \frac{1\!-\!\rho_x}{2}\xi^2 \!+\! \frac{1\!-\!\rho_y}{2}\eta^2\Big)T\!\right)\!\cdot\exp\!\bigg(\! \mathrm{i}(\xi-c_x)\sqrt{\rho_xk}\sum_{n=0}^{N-1}Z_{n,x}\\
&\quad + \mathrm{i}(\eta-c_y)\sqrt{\rho_yk}\sum_{n=0}^{N-1}\widetilde{Z}_{n,y} + O\big((|\xi|+|\eta|)^3k\sqrt{k}\big)\cdot\mathrm{i}\sum_{n=0}^{N-1}\phi_1(Z_{n,x},\widetilde{Z}_{n,y})\\
&\quad  + O\big((|\xi|+|\eta|)^4 k^2\big)\cdot\sum_{n=0}^{N-1}\phi_2(Z_{n,x},\widetilde{Z}_{n,y})+  o\big((|\xi|+|\eta|)^4k\big) \bigg).
\end{align*}
Here
\begin{align*}
&\quad\exp\bigg(\Big(a_x+a_y-b_x\rho_x-b_y\rho_y + \frac{1-\rho_x}{2}\xi^2 + \frac{1-\rho_y}{2}\eta^2\Big)T\bigg)\\
& = 1 + \frac{\xi^4}{24}h_x^2(1-4\rho_x)T + \frac{\eta^4}{24}h_y^2(1-4\rho_y)T + O(\xi^6h_x^4) + O(\eta^6h_y^4),
\end{align*}
and
\begin{align*}
&\quad\exp\bigg( \mathrm{i}(\xi-c_x)\sqrt{\rho_xk}\sum_{n=0}^{N-1}Z_{n,x} + \mathrm{i}(\eta-c_y)\sqrt{\rho_yk}\sum_{n=0}^{N-1}\widetilde{Z}_{n,y} + O\big((|\xi|+|\eta|)^3k\sqrt{k}\big)\cdot\mathrm{i}\sum_{n=0}^{N-1}\phi_1(Z_{n,x},\widetilde{Z}_{n,y})\\[3pt]
&\quad  + O\big((|\xi|+|\eta|)^4 k^2\big)\cdot\sum_{n=0}^{N-1}\phi_2(Z_{n,x},\widetilde{Z}_{n,y}) \bigg)\\
&= 1 + \mathrm{i}(\xi-c_x)\sqrt{\rho_xk}\sum_{n=0}^{N-1}Z_{n,x} + \mathrm{i}(\eta-c_y)\sqrt{\rho_yk}\sum_{n=0}^{N-1}\widetilde{Z}_{n,y} - \frac{1}{2}(\xi-c_x)^2\rho_xk\bigg(\sum_{n=0}^{N-1}Z_{n,x}\bigg)^2\\
&\quad - \frac{1}{2}(\eta-c_y)^2\rho_yk\bigg(\sum_{n=0}^{N-1}\widetilde{Z}_{n,y}\bigg)^2
+ O\big((|\xi|+|\eta|)^3k\sqrt{k}\big)\cdot\mathrm{i}\sum_{n=0}^{N-1}\widehat{\phi}_1(Z_{n,x},\widetilde{Z}_{n,y})\\
&\quad + O\big((|\xi|+|\eta|)^4 k^2\big)\cdot\sum_{n=0}^{N-1}\widehat{\phi}_2(Z_{n,x},\widetilde{Z}_{n,y})+ o\big((|\xi|+|\eta|)^4k\big),
\end{align*}
where $\widehat{\phi}_1(\cdot,\cdot)$ is a polynomial function with odd degree, and $\widehat{\phi}_2(\cdot,\cdot)$ are with even degree, and
\begin{equation}\label{eq_EXTZxZy}
\begin{aligned}
\mathbb{E}\bigg[X(T)\sum_n\widehat{\phi}_1(Z_{n,x},\widetilde{Z}_{n,y})\bigg] &= O(k^{-\frac{1}{2}})\exp\bigg(-\frac{1}{2}\big(\xi^2+\eta^2+2\xi\eta\sqrt{\rho_x\rho_y}\rho_{xy}\big)T\bigg),\\
\mathbb{E}\bigg[X(T)\sum_n\widehat{\phi}_2(Z_{n,x},\widetilde{Z}_{n,y})\bigg] &= O(k^{-1})\exp\bigg(-\frac{1}{2}\big(\xi^2+\eta^2+2\xi\eta\sqrt{\rho_x\rho_y}\rho_{xy}\big)T\bigg),\\
\mathbb{E}\bigg|X(T)\sum_n\widehat{\phi}_1(Z_{n,x},\widetilde{Z}_{n,y})\bigg|^2&= O(k^{-1})\exp\bigg(-\big(\xi^2+\eta^2+2\xi\eta\sqrt{\rho_x\rho_y}\rho_{xy}\big)T\bigg),\\
\mathbb{E}\bigg|X(T)\sum_n\widehat{\phi}_2(Z_{n,x},\widetilde{Z}_{n,y})\bigg|^2&= O(k^{-2})\exp\bigg(-\big(\xi^2+\eta^2+2\xi\eta\sqrt{\rho_x\rho_y}\rho_{xy}\big)T\bigg).
\end{aligned}
\end{equation}
Hence we have in the low wave number region,
\begin{align*}
& X_N- X(T) =X(T)\cdot\bigg(\exp\Big(\sum_{n=0}^{N-1}e_n\Big)-1\bigg) = X(T)\cdot\bigg\{\frac{\mathrm{i}}{6}\sqrt{\rho_x}\xi^3h_x^2M_{T}^x+ \frac{1}{24}(1-4\rho_x)\xi^4h_x^2T\\
&\quad  + \frac{\mathrm{i}}{6}\sqrt{\rho_y}\eta^3h_y^2\widetilde{M}_{T}^y + \frac{1}{24}(1-4\rho_y)\eta^4h_y^2T + O\big((|\xi|+|\eta|)^3k\sqrt{k}\big)\cdot\mathrm{i}\sum_{n=0}^{N-1}\widehat{\phi}_1(Z_{n,x},\widetilde{Z}_{n,y})\\
&\quad + O\big((|\xi|+|\eta|)^4 k^2\big)\cdot\sum_{n=0}^{N-1}\widehat{\phi}_2(Z_{n,x},\widetilde{Z}_{n,y}) + o(k,h_x^2,h_y^2)\cdot\varphi(T,h_x\xi,h_y\eta)\bigg\},
\end{align*}
where $\varphi(T,h_x\xi,h_y\eta)$ is a random variable with bounded moments.

{
\begin{remark}
We can derive the exact leading order term by taking the inverse Fourier transform. For instance, the leading order error in $h_x$ is
\[
\bigg(-\frac{1}{6}\sqrt{\rho_x}M_{T}^x \frac{\partial^3}{\partial x^3} v(T,x,y) + \frac{1}{24}(1-4\rho_x)T\frac{\partial^4}{\partial x^4} v(T,x,y)\bigg)\cdot h_x^2,
\]
and similar for $h_y$ (replacing `x' by `y'); the leading order error in $k$ can be found by the same technique but is significantly lengthier and hence omitted.
\end{remark}
}

\subsection{High wave number region (proof of Lemma \ref{lem_2dmeansquare_high})}\label{sec_2dhighwave}

Now we consider the case when either $\xi$ or $\eta$ is large. 

First we calculate the upper bound of $\mathbb{E}\big[\big|X_N(\xi,\eta)\big|^2\big]$. To simplify the proof, we take $h_x=h_y=h$, and the case where $h_x\neq h_y$ is similar. Write $\lambda = \frac{k}{h^2}$ .
\begin{lemma}
For $(\xi,\eta)\notin\Omega_\text{low}$,
\begin{equation}
\mathbb{E}\big[\big|X_N(\xi,\eta)\big|^2\big] \leq |X_0|^2\bigg(1-4\beta \frac{\lambda\big(\sin^2\frac{\xi h}{2} + \sin^2\frac{\eta h}{2}\big) + \lambda^2\big(\sin^2\frac{\xi h}{2} + \sin^2\frac{\eta h}{2}\big)^2}{\big(1 + 2\lambda\Big(\sin^2\frac{\xi h}{2} + \sin^2\frac{\eta h}{2}\big)\Big)^2}\bigg)^N,
\end{equation}
where
\[
\beta = \min\big\{1-\rho_x, 1-\rho_y, 1-2\rho_x^2(1+2|\rho_{xy}|), 1-2\rho_y^2(1+2|\rho_{xy}|), 1-2\rho_x\rho_y(1+2|\rho_{xy}|+3\rho_{xy}^2)\big\} \in (0,1).
\]
\end{lemma}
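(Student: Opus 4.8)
The plan is to exploit the one-step multiplicative structure of the scheme and reduce everything to the single-step second moment $\mathbb{E}[|C_0|^2]$, which was essentially computed already in the proof of Theorem~\ref{thm_2dstability}. From \eqref{eq_Cn1} the amplification factor satisfies $X_{n+1}=C_n X_n$, where $C_n$ depends only on the increments $Z_{n,x},\widetilde Z_{n,y}$ at step $n$, whereas $X_n$ is a function of the increments up to step $n-1$. Since the Brownian increments are independent across time steps and the coefficients in \eqref{eq_abc} do not depend on $n$, the multipliers $C_n$ are i.i.d.\ and independent of $X_n$. Conditioning on the first $n$ steps thus gives $\mathbb{E}[|X_{n+1}|^2]=\mathbb{E}[|C_0|^2]\,\mathbb{E}[|X_n|^2]$, and by induction $\mathbb{E}[|X_N|^2]=(\mathbb{E}[|C_0|^2])^N|X_0|^2$. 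It therefore suffices to bound the single factor $\mathbb{E}[|C_0|^2]$, and the restriction $(\xi,\eta)\notin\Omega_{\mathrm{low}}$ plays no role in the inequality itself (it only guarantees that the resulting geometric factor is bounded away from $1$, via a lower bound on $s_x$ or $s_y$).

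Next I would write $\mathbb{E}[|C_0|^2]$ as a ratio. Its denominator is $(1-(a_x+a_y)k)^2$, which with $h_x=h_y=h$, $\lambda=k/h^2$, $s_x=\sin^2\frac{\xi h}{2}$ and $s_y=\sin^2\frac{\eta h}{2}$ equals $(1+2\lambda(s_x+s_y))^2$, matching the stated denominator. The numerator is exactly $\mathbb{E}[\,|\text{(numerator of }C_0)|^2]$, i.e.\ the sum of the real-part and imaginary-part second moments already estimated in the proof of Theorem~\ref{thm_2dstability}. Reusing the bound obtained there and substituting $a_x^2k^2=4\lambda^2 s_x^2$, $a_xa_yk^2=4\lambda^2 s_xs_y$, $-a_xk\cos^2\frac{\xi h}{2}=2\lambda s_x(1-s_x)$ and the analogues in $y$ turns the numerator into a polynomial in $\lambda,s_x,s_y$.

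The remaining, and main, task is then the purely algebraic step of showing
\[
\text{(numerator)} \le (1+2\lambda(s_x+s_y))^2 - 4\beta\big[\lambda(s_x+s_y)+\lambda^2(s_x+s_y)^2\big].
\]
I would prove this by expanding both sides and comparing coefficients of the monomials $\lambda s_x,\lambda s_y,\lambda s_x^2,\lambda s_y^2,\lambda^2 s_x^2,\lambda^2 s_y^2$ and $\lambda^2 s_xs_y$. The coefficients of $\lambda s_x$ and $\lambda s_y$ in $\text{RHS}-\text{numerator}$ are $4(1-\beta-\rho_x)$ and $4(1-\beta-\rho_y)$, nonnegative because $\beta\le 1-\rho_x$ and $\beta\le 1-\rho_y$; those of $\lambda^2 s_x^2$ and $\lambda^2 s_y^2$ are nonnegative because $\beta\le 1-2\rho_x^2(1+2|\rho_{xy}|)$ and $\beta\le 1-2\rho_y^2(1+2|\rho_{xy}|)$; and the cross coefficient of $\lambda^2 s_xs_y$ is nonnegative because $\beta\le 1-2\rho_x\rho_y(1+2|\rho_{xy}|+3\rho_{xy}^2)$, while the $\lambda s_x^2,\lambda s_y^2$ contributions appear with the manifestly nonnegative coefficients $4\rho_x,4\rho_y$. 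This is precisely why $\beta$ is defined as the minimum of those five quantities, and Assumption~\ref{ass-corr} (together with $\rho_x,\rho_y<1$) ensures each of them is positive, whence $\beta>0$. Since $\lambda,s_x,s_y\ge 0$, every monomial of $\text{RHS}-\text{numerator}$ is nonnegative and the displayed inequality follows; dividing by the positive denominator yields $\mathbb{E}[|C_0|^2]\le 1-4\beta\,\frac{\lambda(s_x+s_y)+\lambda^2(s_x+s_y)^2}{(1+2\lambda(s_x+s_y))^2}$, and raising to the $N$-th power gives the claim.

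I expect the main obstacle to be bookkeeping discipline in the coefficient matching: one must pair each term of the Theorem~\ref{thm_2dstability} estimate with the correct monomial of the target, being especially careful with the mixed term and with the $(1-s_x)$, $(1-s_y)$ factors that redistribute weight between the $\lambda s$ and $\lambda s^2$ coefficients, so that no cross-contribution is overlooked. Once the correspondence is laid out, each required sign condition reads off directly from one of the five defining constraints of $\beta$.
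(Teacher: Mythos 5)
Your proposal is correct and follows essentially the same route as the paper: reduce to the one-step factor via $\mathbb{E}[|X_N|^2]=|X_0|^2\prod_{n}\mathbb{E}[|C_n|^2]$ (justified, as you note, by independence of the increments across steps), reuse the second-moment estimate from the proof of Theorem~\ref{thm_2dstability} with $a_xk=-2\lambda s_x$, $a_yk=-2\lambda s_y$, and then verify that each of the five defining constraints on $\beta$ makes the corresponding term nonnegative. Your monomial-by-monomial coefficient comparison is just a repackaging of the paper's step of lower-bounding each bracket (e.g.\ $1-\rho_x\cos^2\tfrac{\xi h}{2}\ge\beta$, $1-2\rho_x^2(1+2|\rho_{xy}|)\ge\beta$) by $\beta$ and summing via $s_x^2+2s_xs_y+s_y^2=(s_x+s_y)^2$, and all your coefficient values check out.
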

\begin{proof}
By \eqref{eq_Cn1}, we have $X_N = X_0\prod_{n=0}^{N-1} C_n$,
where 
\[
C_n = \frac{1 -\mathrm{i}c_x\sqrt{\rho_xk}Z_{n,x} -\mathrm{i}c_y\sqrt{\rho_yk}\widetilde{Z}_{n,y} + b_x\rho_xk(Z_{n,x}^2-1) + b_y\rho_yk(\widetilde{Z}_{n,y}^2-1) + d\sqrt{\rho_x\rho_y}kZ_{n,x}\widetilde{Z}_{n,y}}{1-(a_x+a_y)k},
\]
\begin{align*}
a_x &= -\frac{2\sin^2\frac{\xi h_x}{2}}{h_x^2},\qquad
b_x = -\frac{\sin^2\xi h_x}{2h_x^2},\qquad
c_x = \frac{\sin \xi h_x}{h_x},\qquad
d = -\frac{\sin\xi h_x\sin\eta h_y}{h_xh_y},\\
a_y &= -\frac{2\sin^2\frac{\eta h_y}{2}}{h_y^2},\qquad
b_y = -\frac{\sin^2\eta h_y}{2h_y^2},\qquad
c_y = \frac{\sin \eta h_y}{h_y}.
\end{align*}

Then
\begin{align*}
\mathbb{E}\big[ \big|C_n\big|^2\big] &\leq 1- \frac{4\frac{k}{h^2}\sin^2\frac{\xi h}{2}\left(1-\rho_x\cos^2\frac{\xi h}{2}+\frac{k}{h^2}\sin^2\frac{\xi h}{2}\left(1-2\rho_x^2(1+2|\rho_{xy}|)\right)\right)}{\left[1+2\frac{k}{h^2}\Big(\sin^2\frac{\xi h}{2}+\sin^2\frac{\eta h}{2}\Big)\right]^2}\\
&\qquad - \frac{4\frac{k}{h^2}\sin^2\frac{\eta h}{2}\left(1-\rho_y\cos^2\frac{\eta h}{2}+\frac{k}{h^2}\sin^2\frac{\eta h}{2}\left(1-2\rho_y^2(1+2|\rho_{xy}|)\right)\right)}{\left[1+2\frac{k}{h^2}\Big(\sin^2\frac{\eta h}{2}+\sin^2\frac{\eta h}{2}\Big)\right]^2}\\
&\qquad - \frac{8\frac{k^2}{h^4}\sin^2\frac{\xi h}{2}\sin^2\frac{\eta h}{2}\left(1-2\rho_x\rho_y(1+2|\rho_{xy}|+3\rho_{xy}^2)\right)}{\left[1+2\frac{k}{h^2}\Big(\sin^2\frac{\eta h}{2}+\sin^2\frac{\eta h}{2}\Big)\right]^2}.
\end{align*}

Denote $ \lambda = \frac{k}{h^2},\quad a = \sin^2\frac{\xi h}{2},\quad b = \sin^2\frac{\eta h}{2}$. It follows that
\begin{align*}
\mathbb{E}\big[ \big|C_n\big|^2\big] &\leq 1-\frac{4\lambda a\Big( 1-\rho_x\cos^2\frac{\xi h}{2} + \lambda a\big(1-2\rho_x^2(1+2|\rho_{xy}|)\big) \Big)}{\big(1+2\lambda(a+b)\big)^2}\\
&\qquad -\frac{4\lambda b\Big( 1-\rho_y\cos^2\frac{\eta h}{2} + \lambda b\big(1-2\rho_y^2(1+2|\rho_{xy}|)\big) \Big)}{\big(1+2\lambda(a+b)\big)^2}\\
&\qquad -\frac{8\lambda^2 ab\Big(1-2\rho_x\rho_y(1+2|\rho_{xy}|+3\rho_{xy}^2)\Big)}{\big(1+2\lambda(a+b)\big)^2}.
\end{align*}
By Assumption \ref{ass-corr},
\begin{align*}
&0\le \rho_x < 1,\quad 0\le  \rho_y<1,\quad 0< 1-2\rho_x^2(1+2|\rho_{xy}|)\le 1,\\
&0< 1-2\rho_y^2(1+2|\rho_{xy}|)\le 1,\quad 0< 1-2\rho_x\rho_y(1+2|\rho_{xy}|+3\rho_{xy}^2)\le1,
\end{align*}
we write
\begin{align*}
\beta &= \min\big\{1-\rho_x, 1-\rho_y, 1-2\rho_x^2(1+2|\rho_{xy}|), 1-2\rho_y^2(1+2|\rho_{xy}|), 1-2\rho_x\rho_y(1+2|\rho_{xy}|+3\rho_{xy}^2)\big\} \in (0,1),\\
d &= a+b = \sin^2\frac{\xi h}{2} + \sin^2\frac{\eta h}{2}.
\end{align*}

 Consequently, 
\begin{align*}
\mathbb{E}\big[ \big|C_n\big|^2\big] &\leq 1-\frac{4\lambda a\big(\beta + \lambda a \beta \big) + 4\lambda b\big(\beta + \lambda b \beta \big) + 8\lambda^2 ab\beta }{\big(1+2\lambda(a+b)\big)^2}
= 1- \frac{4 \beta \big(\lambda d+\lambda^2 d^2\big)}{\big(1+2\lambda d\big)^2}.
\end{align*}
Therefore we have
\begin{equation}\label{eq_2dXN2approx}
\mathbb{E}\big[\big|X_N\big|^2\big] \!=\! \big|X_0\big|^2\!\prod_{n=0}^{N-1} \mathbb{E}\big[ \big|C_n\big|^2\big] \!\leq |X_0|^2\!\bigg(\!1\!-\!4\beta \frac{\lambda\big(\sin^2\frac{\xi h}{2} \!+\! \sin^2\frac{\eta h}{2}\big) \!+\! \lambda^2\big(\sin^2\frac{\xi h}{2} \!+\! \sin^2\frac{\eta h}{2}\big)^2}{\big(1 + 2\lambda\Big(\sin^2\frac{\xi h}{2} + \sin^2\frac{\eta h}{2}\big)\Big)^2}\bigg)^N\!.
\end{equation}
\end{proof}

Then we consider two scenarios: $0<\lambda<1$ and $\lambda\geq1$.
For $0<\lambda<1$, 
\[
\Omega_{\text{high}} = \big\{(\xi,\eta):\vert\xi\vert>h^{-2p},\text{ or }\ \vert\eta\vert>h^{-2p}\big\}\cap [-\pi h^{-1},\pi h^{-1}]\times[-\pi h^{-1},\pi h^{-1}].
\]

\begin{lemma}
For $0<\lambda<1$ (i.e., $k<h^2$), 
\[
\mathbb{E}\Bigg[\,\bigg|\iint_{\Omega_{\text{high}}} X(T,\xi,\eta)-X_N(\xi,\eta)\,\mathrm{d}\xi\mathrm{d}\eta\bigg|^2\,\Bigg] = o(h^r),\quad \forall r>0.
\]
\end{lemma}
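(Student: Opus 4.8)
The plan is to bound the two contributions to $X(T)-X_N$ separately via Minkowski's inequality in $L_2(\Omega)$, and then to pull the probabilistic $L_2$-norm inside the spatial integral by Minkowski's integral inequality, reducing everything to the integral over $\Omega_{\text{high}}$ of a deterministic Gaussian in $(\xi,\eta)$. Concretely, I would first write
\[
\sqrt{\mathbb{E}\Big[\big|\!\iint_{\Omega_{\text{high}}}\!(X(T)-X_N)\,\mathrm{d}\xi\mathrm{d}\eta\big|^2\Big]}
\le \iint_{\Omega_{\text{high}}}\!\sqrt{\mathbb{E}\big[|X(T)|^2\big]}\,\mathrm{d}\xi\mathrm{d}\eta
+ \iint_{\Omega_{\text{high}}}\!\sqrt{\mathbb{E}\big[|X_N|^2\big]}\,\mathrm{d}\xi\mathrm{d}\eta,
\]
and then show that each spatial integral is $o(h^r)$ for every $r>0$.

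For the exact term, from \eqref{eq_solXn} the modulus $|X(T)|=\exp\big(-\tfrac12(1-\rho_x)\xi^2T-\tfrac12(1-\rho_y)\eta^2T\big)$ is deterministic, so $\sqrt{\mathbb{E}[|X(T)|^2]}=|X(T)|$. As $\rho_x,\rho_y<1$ this is a genuine Gaussian in $(\xi,\eta)$, and since $\Omega_{\text{high}}\subset\{|\xi|>h^{-2p}\}\cup\{|\eta|>h^{-2p}\}$, the integral reduces to Gaussian tails beyond $h^{-2p}$ in one variable (with the other integrated over $\mathbb{R}$). Each such tail is bounded by $C\,h^{2p}\exp(-c\,h^{-4p})$, which is $o(h^r)$ for every $r>0$ because $0<p<\tfrac14$.

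The numerical term is the crux. Here I would invoke the preceding lemma together with $|X_0|=1$ to get $\mathbb{E}[|X_N|^2]\le (1-4\beta g(d))^N$ with $g(d)=\frac{\lambda d(1+\lambda d)}{(1+2\lambda d)^2}$, $\lambda=k/h^2$ and $d=\sin^2\frac{\xi h}{2}+\sin^2\frac{\eta h}{2}$. Using $1-x\le e^{-x}$ gives $\mathbb{E}[|X_N|^2]\le e^{-4\beta N g(d)}$, and the key observation is that $N=T/k$ cancels the factor $k$ hidden in $\lambda$: since $\tfrac{T}{k}\lambda d=\tfrac{Td}{h^2}$, one finds
\[
N\,g(d)=\frac{T d}{h^2}\cdot\frac{1+\lambda d}{(1+2\lambda d)^2}.
\]
For $0<\lambda<1$ and $d\in[0,2]$ we have $\lambda d<2$, and since $s\mapsto\frac{1+s}{(1+2s)^2}$ is decreasing on $[0,2]$ with value $\tfrac{3}{25}$ at $s=2$, the trailing factor is bounded below by $\tfrac{3}{25}$. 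Combined with the Jordan-type inequality $\sin^2\theta\ge\frac{4}{\pi^2}\theta^2$ on $[0,\tfrac{\pi}{2}]$ (applicable since $|\xi|,|\eta|\le\pi/h$), which yields $\tfrac{d}{h^2}\ge\tfrac{1}{\pi^2}(\xi^2+\eta^2)$, this gives $N g(d)\ge\frac{3T}{25\pi^2}(\xi^2+\eta^2)$, hence $\sqrt{\mathbb{E}[|X_N|^2]}\le \exp\big(-\frac{6\beta T}{25\pi^2}(\xi^2+\eta^2)\big)$, a Gaussian independent of $h$ and $k$. The integral over $\Omega_{\text{high}}$ is then again a Gaussian tail beyond $h^{-2p}$, so $o(h^r)$ for every $r>0$, and the two bounds combine to prove the claim.

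The main obstacle is this numerical estimate: one must turn the per-step amplification factor, raised to the power $N=T/k$, into a $k$-independent Gaussian decay in the wave numbers. This hinges on the exact cancellation of $k$ between $N$ and $\lambda$, and crucially on the restriction $0<\lambda<1$, which keeps $\lambda d$ bounded and hence $\frac{1+\lambda d}{(1+2\lambda d)^2}$ bounded away from $0$; the complementary regime $\lambda\ge1$ (where $\Omega_{\text{high}}$ is cut at $k^{-p}$ rather than $h^{-2p}$) must be handled separately.
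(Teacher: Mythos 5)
Your proof is correct, and it shares the paper's two essential ingredients --- the per-step bound $\mathbb{E}\big[|C_n|^2\big]\le 1-4\beta\,\frac{\lambda d+\lambda^2 d^2}{(1+2\lambda d)^2}$ from the preceding lemma, and the fact that for $\lambda<1$ the cut-off defining $\Omega_{\text{high}}$ sits at $h^{-2p}$, so everything reduces to Gaussian tails of size $\mathrm{e}^{-c\,h^{-4p}}$, which beat every power of $h$ --- but your bookkeeping is genuinely different and in one place sharper. The paper squares first: it applies Cauchy--Schwarz with the area $|\Omega_{\text{high}}|\le 4\pi^2h^{-2}$, splits $|X(T)-X_N|^2\le 2|X(T)|^2+2|X_N|^2$, bounds $\mathbb{E}\big[|X_N|^2\big]$ \emph{uniformly} on $\Omega_{\text{high}}$ by $|X_0|^2\exp(-\beta T h^{-4p})$ using only the lower bound $d\ge\sin^2(h^{1-2p}/2)$ at the cut-off, and then multiplies by the area, absorbing the resulting negative powers of $h$ into the super-polynomial decay. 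You instead pull the $L_2(\Omega)$-norm inside the spatial integral by Minkowski's integral inequality and dominate $\sqrt{\mathbb{E}[|X_N|^2]}$ \emph{pointwise} by the mesh-independent Gaussian $\exp\big(-\tfrac{6\beta T}{25\pi^2}(\xi^2+\eta^2)\big)$, via the cancellation $N\lambda d=Td/h^2$, the monotonicity of $s\mapsto(1+s)/(1+2s)^2$, and Jordan's inequality giving $d\ge h^2(\xi^2+\eta^2)/\pi^2$; the tail integral then closes the argument with no area factors at all (the $|X(T)|$ contribution is handled the same way in both proofs, its modulus being deterministic). Your route buys a cleaner, uniform Gaussian domination, and it also quietly repairs a constant-level slip in the paper: the paper's intermediate inequality $\exp\big(-\tfrac{4\beta(d+\lambda d^2)T}{(1+2\lambda d)^2}h^{-2}\big)<\exp(-4\beta dTh^{-2})$ is reversed as written, since $(1+\lambda d)/(1+2\lambda d)^2\le 1$; your lower bound $(1+s)/(1+2s)^2\ge 3/25$ on $s\in[0,2]$ --- valid precisely because $\lambda<1$ keeps $\lambda d\le 2$, as you note --- is the correct way to retain a positive constant in the exponent, and with it the conclusion stands unchanged, since the constant is immaterial for an $o(h^r)$-for-all-$r$ statement.
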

\begin{proof}
Note that
\[
X(T) = \exp\bigg(-\frac{1}{2}(1-\rho_x)\xi^2T -\frac{1}{2}(1-\rho_y)\eta^2T -\mathrm{i}\xi\sqrt{\rho_x}M_T^x - \mathrm{i}\eta\sqrt{\rho_y}M_T^y\bigg).
\]
Then
\begin{align*}
&\quad \mathbb{E}\Bigg[\,\bigg|\iint_{\Omega_{\text{high}}} X(T,\xi,\eta)-X_N(\xi,\eta)\,\mathrm{d}\xi\mathrm{d}\eta\bigg|^2\,\Bigg]< 4\pi^2 h^{-2}\mathbb{E}\Bigg[\,\iint_{\Omega_{\text{high}}}\bigg|X(T,\xi,\eta)-X_N(\xi,\eta)\bigg|^2\,\mathrm{d}\xi\mathrm{d}\eta\,\Bigg]\\
&\leq 8\pi^2 h^{-2} \iint_{\Omega_{\text{high}}} \mathbb{E}\Big[\big|X_N(\xi,\eta)\big|^2+ \big|X(T,\xi,\eta)\big|^2\Big]\,\mathrm{d}\xi\mathrm{d}\eta= 8\pi^2 h^{-2} \iint_{\Omega_{\text{high}}} \mathbb{E}\Big[\big|X_N(\xi,\eta)\big|^2\Big]\,\mathrm{d}\xi\mathrm{d}\eta + f_0(k),
\end{align*}
where 
\begin{align*}
f_0(k) &= 8\pi^2 h^{-2} \iint_{\Omega_{\text{high}}}\mathrm{e}^{-(1-\rho_x)\xi^2T - (1-\rho_y)\eta^2T}\,\mathrm{d}\xi\mathrm{d}\eta\\
& = 8\pi^2 h^{-2}\!\!\! \int_{0}^{\pi/h}\!\!\int_{h^{-2p}}^{\pi/h}\!\!\mathrm{e}^{-(1-\rho_x)\xi^2T - (1-\rho_y)\eta^2T}\,\mathrm{d}\xi\mathrm{d}\eta + 8\pi^2 h^{-2} \!\!\!\int_{h^{-2p}}^{\pi/h}\!\!\int_{0}^{\pi/h}\!\!\mathrm{e}^{-(1-\rho_x)\xi^2T - (1-\rho_y)\eta^2T}\,\mathrm{d}\xi\mathrm{d}\eta\\
&\quad -8\pi^2 h^{-2} \int_{h^{-2p}}^{\pi/h}\int_{h^{-2p}}^{\pi/h}\mathrm{e}^{-(1-\rho_x)\xi^2T - (1-\rho_y)\eta^2T}\,\mathrm{d}\xi\mathrm{d}\eta\\
&\leq C\cdot h^{-2+2p}\big(\mathrm{e}^{-(1-\rho_x)Th^{-2p}}+\mathrm{e}^{-(1-\rho_y)Th^{-2p}}\big) = o(h^r),\quad \forall r>0.
\end{align*}

Denote $d = \sin^2\frac{\xi h}{2} + \sin^2\frac{\eta h}{2}$, from \eqref{eq_2dXN2approx} and $\lambda=k/h^2$, 
\begin{align*}
\mathbb{E}\big[\big|X_N\big|^2\big] &\leq |X_0|^2\bigg(1- \frac{4 \beta \big(d+\lambda d^2\big)}{\big(1+2\lambda d\big)^2}\cdot \frac{T}{Nh^2}\bigg)^N< |X_0|^2\exp\Big(- \frac{4 \beta \big(d+\lambda d^2\big)T}{\big(1+2\lambda d\big)^2}\cdot h^{-2}\Big).
\end{align*}
In this case, as at least one of $\xi$ and $\eta$ belongs to $(h^{-2p},\pi/h)$, we have
\[
d = a+b = \sin^2\frac{\xi h}{2} + \sin^2\frac{\eta h}{2} \geq \sin^2\frac{h^{1-2p}}{2} = \frac{h^{2-4p}}{4} - \frac{h^{4-8p}}{48} + O(h^{5-10p}).
\]
Therefore,
\[
\mathbb{E}\big[\big|X_N\big|^2\big] \!<\! |X_0|^2\exp\Big(- \frac{4 \beta \big(d+\lambda d^2\big)T}{\big(1+2\lambda d\big)^2}\cdot h^{-2}\Big) \!<\! |X_0|^2\exp\big(-4\beta dTh^{-2} \big) \!<\! |X_0|^2\exp\big(-\beta Th^{-4p} \big) \!=\! o(h^r).
\]
As a result,
\[
8\pi^2 h^{-2} \iint_{\Omega_{\text{high}}} \mathbb{E}\Big[\big|X_N(\xi,\eta)\big|^2\Big]\,\mathrm{d}\xi\mathrm{d}\eta < 16\pi^4|X_0|^2 h^{-4}\exp\big(-\beta Th^{-4p} \big) = o(h^r),\quad \forall r>0.
\]
Therefore, for all $r>0$,
\[
\mathbb{E}\Bigg[\,\bigg|\iint_{\Omega_{\text{high}}}\!\!\! X(T,\xi,\eta)-X_N(\xi,\eta)\,\mathrm{d}\xi\mathrm{d}\eta\bigg|^2\,\Bigg] < 8\pi^2 h^{-2} \iint_{\Omega_{\text{high}}}\!\!\! \mathbb{E}\Big[\big|X_N(\xi,\eta)\big|^2\Big]\,\mathrm{d}\xi\mathrm{d}\eta + f_0(k) = o(h^r).
\]
\end{proof}

For $\lambda\geq 1$, we further separate the domain $\Omega_{\text{high}}$ into a middle wave region 
\[
\Omega_{\text{high}}^1 = \big\{(|\xi|,|\eta|)\in [k^{-p},k^{-\frac{1}{2}}]\times[0,k^{-\frac{1}{2}}]\cup[0,k^{-\frac{1}{2}}]\times[k^{-p},k^{-\frac{1}{2}}] \big\},
\]
and a high wave region
\[
\Omega_{\text{high}}^2 = \big\{(|\xi|,|\eta|)\in [k^{-\frac{1}{2}},\pi/h]\times[0,\pi/h]\cup[0,\pi/h]\times[k^{-\frac{1}{2}},\pi/h] \big\}.
\]

\begin{lemma}\label{lem_lammage1high}
For $\lambda\geq 1$ (i.e., $k\geq h^2$), {
there exists $\theta\in(0,1)$ independent of $h$ and $k$ such that}
\[
\mathbb{E}\Bigg[\,\bigg|\iint_{\Omega_{\text{high}}} X(T,\xi,\eta)-X_N(\xi,\eta)\,\mathrm{d}\xi\mathrm{d}\eta\bigg|^2\,\Bigg] \leq C h^{-4}\theta^{N}.
\]

\end{lemma}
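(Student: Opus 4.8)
The plan is to bound the squared modulus of the integral by an integral of second moments via Cauchy--Schwarz, and then split $\Omega_{\text{high}}=\Omega_{\text{high}}^1\cup\Omega_{\text{high}}^2$, using the pointwise damping estimate for $\mathbb{E}[|X_N|^2]$ from \eqref{eq_2dXN2approx} together with the Gaussian decay of $|X(T)|^2$. Since $\Omega_{\text{high}}\subseteq[-\pi/h,\pi/h]^2$ has area at most $4\pi^2 h^{-2}$, Cauchy--Schwarz gives
\[
\mathbb{E}\Big[\,\Big|\iint_{\Omega_{\text{high}}}\!\big(X(T)-X_N\big)\,\mathrm{d}\xi\,\mathrm{d}\eta\Big|^2\,\Big]\le 4\pi^2 h^{-2}\iint_{\Omega_{\text{high}}}\mathbb{E}\big[|X(T)-X_N|^2\big]\,\mathrm{d}\xi\,\mathrm{d}\eta,
\]
and with $\mathbb{E}|X(T)-X_N|^2\le 2\,\mathbb{E}|X(T)|^2+2\,\mathbb{E}|X_N|^2$ it suffices to bound $\iint_{\Omega_{\text{high}}}\mathbb{E}|X(T)|^2$ and $\iint_{\Omega_{\text{high}}}\mathbb{E}|X_N|^2$, each by $C h^{-2}\theta^N$.

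The exact-solution part is immediate: $\mathbb{E}|X(T)|^2=\mathrm{e}^{-(1-\rho_x)\xi^2T-(1-\rho_y)\eta^2T}$, and on $\Omega_{\text{high}}$ at least one of $|\xi|,|\eta|$ exceeds $k^{-p}$, so the same Gaussian tail estimate as in the case $\lambda<1$ yields $\iint_{\Omega_{\text{high}}}\mathbb{E}|X(T)|^2\le C\,\mathrm{e}^{-cTk^{-2p}}$, which is smaller than any power of $k$ and in particular dominated by $h^{-2}\theta^N$.

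For the numerical part I would use $\mathbb{E}[|X_N|^2]\le\big(1-4\beta\,g(\lambda d)\big)^N$, where $g(s)=s(1+s)/(1+2s)^2$ and $d=\sin^2(\xi h/2)+\sin^2(\eta h/2)$, which is exactly \eqref{eq_2dXN2approx} after writing $s=\lambda d$. On $\Omega_{\text{high}}^2$ one coordinate, say $|\xi|$, satisfies $|\xi|\ge k^{-1/2}$; Jordan's inequality $\sin^2(\xi h/2)\ge(\xi h/\pi)^2$ together with $\lambda h^2=k$ gives $\lambda d\ge\pi^{-2}$, and since $g$ is increasing (one computes $g'(s)=(1+2s)^{-3}>0$), $g(\lambda d)\ge g(\pi^{-2})=:\gamma>0$. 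Hence $\mathbb{E}|X_N|^2\le\theta_2^{\,N}$ with $\theta_2:=1-4\beta\gamma\in(0,1)$ independent of $h$ and $k$; integrating this constant-ratio bound over the region of area $O(h^{-2})$ and restoring the outer $h^{-2}$ factor produces the $C h^{-4}\theta_2^{\,N}$ that gives the $\theta^N$ in the statement.

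The delicate region is $\Omega_{\text{high}}^1$, where $\lambda d\le\tfrac12$ and the damping degenerates as $|\xi|\downarrow k^{-p}$. Here I would use $g(s)\ge\tfrac38 s$ for $s\in[0,\tfrac12]$ and the identity $\lambda N=T/h^2$ to rewrite the bound as $\mathbb{E}|X_N|^2\le\exp(-\tfrac{3\beta}{2}\,dT/h^2)$, and then estimate $d\ge(\xi h/\pi)^2\ge k^{-2p}h^2/\pi^{2}$ for the large coordinate, so that $\mathbb{E}|X_N|^2\le\exp(-cTk^{-2p})$. Integrating over the middle region (area $O(k^{-1})$) and using $k\ge h^2$ to trade powers of $k$ for powers of $h$ then shows that this contribution is super-polynomially small. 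The main obstacle is precisely that the per-mode decay rate here is only $\exp(-cTk^{-2p})$ rather than geometric in $N$, so one must verify carefully that, after the $h^{-2}$ prefactor is restored and $\lambda\ge1$ is used, the middle-region integral is still controlled by the $\theta^N$ coming from $\Omega_{\text{high}}^2$; this is the step that ties the admissible range of $k$ to $h$. Finally one sets $\theta:=\theta_2$ and collects constants to obtain the stated bound $C h^{-4}\theta^N$.
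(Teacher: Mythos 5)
Your proposal follows essentially the same route as the paper's proof: the same splitting of $\Omega_{\text{high}}$ into the middle band $\Omega_{\text{high}}^1$ and the outer region $\Omega_{\text{high}}^2$, the same per-step second-moment bound \eqref{eq_2dXN2approx}, a geometric contraction on $\Omega_{\text{high}}^2$ obtained from a uniform positive lower bound on $\lambda d$, and the rate $\exp(-cTk^{-2p})$ on the middle band. Your mechanics are correct where you carry them out: $g'(s)=(1+2s)^{-3}>0$ is right, $\lambda d\ge \pi^{-2}$ on $\Omega_{\text{high}}^2$ via Jordan's inequality is right (the paper instead evaluates at the minimal $d=\sin^2\frac{1}{2\sqrt{\lambda}}$ and uses $\lambda\sin^2\frac{1}{2\sqrt{\lambda}}\ge\sin^2\frac12$ for $\lambda\ge1$; both yield a valid $\theta$), and $\lambda d\le\frac12$ on $\Omega_{\text{high}}^1$ together with $g(s)\ge\frac38 s$ there is also right. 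The only structural difference is that you apply Cauchy--Schwarz once with the global prefactor $4\pi^2h^{-2}$, whereas the paper applies it regionwise with the sharper prefactor $O(k^{-1})$ on $\Omega_{\text{high}}^1$; this is immaterial, since that contribution is super-polynomially small in $k$ either way. (The paper's final substitution $\theta=\sqrt{\theta_0}$ is only to match the square-root formulation of Lemma \ref{lem_2dmeansquare_high}; your direct $\theta:=\theta_2$ is fine for the squared statement here.)

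The step you flag as the ``main obstacle'' --- absorbing the middle-band and Gaussian-tail terms, of size $O\big(k^{-2}e^{-\beta Tk^{-2p}}\big)$, into $Ch^{-4}\theta^N$ --- is indeed the one step neither you nor the paper actually proves: the paper disposes of it by assertion (``the first three terms in \eqref{eq_2dEmidhigh} are terms of higher order''), and your hesitation is well founded, because the asserted domination is \emph{not} uniform over $\{k\ge h^2\}$. Take $k=h^2\to0$: then $h^{-4}\theta^N=k^{-2}\theta^{T/k}$, and since $2p<\frac12<1$ one has $e^{-\beta Tk^{-2p}}\gg\theta^{T/k}$ for every fixed $\theta\in(0,1)$, so no admissible choice of $\theta$ makes the middle term $O(h^{-4}\theta^N)$ in that corner of the parameter range; the same remark applies to your unproved claim that the exact-solution tail is ``dominated by $h^{-2}\theta^N$''. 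The inequality does hold in the regime the lemma is used to exhibit the instability ($k$ bounded away from $0$, $h\to0$, where $h^{-4}\theta^N\to\infty$ while the offending terms stay bounded), and those terms are in any case $o(k^r)$ for every $r>0$, hence harmlessly absorbed into the $o(\cdot)$ remainder of Theorem \ref{thm_mean-square}. So your proposal reproduces the paper's argument in substance, and the step you left open is not a private gap of yours but a shared one in the lemma's literal uniform statement; to make your write-up match the paper's level of rigour you would simply state, as the paper does, that the non-geometric terms are of higher order in the regime of interest, or else restate the lemma with those super-polynomially small terms listed separately.
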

\begin{proof}
\begin{align*}
&\quad \mathbb{E}\Bigg[\,\bigg|\iint_{\Omega_{\text{high}}} X(T,\xi,\eta)-X_N(\xi,\eta)\,\mathrm{d}\xi\mathrm{d}\eta\bigg|^2\,\Bigg]\\
&\leq 2\,\mathbb{E}\Bigg[\,\bigg|\iint_{\Omega_{\text{high}}^1}X(T,\xi,\eta)-X_N(\xi,\eta)\,\mathrm{d}\xi\mathrm{d}\eta\bigg|^2\,\Bigg]
 + 2\,\mathbb{E}\Bigg[\,\bigg|\iint_{\Omega_{\text{high}}^2}X(T,\xi,\eta)-X_N(\xi,\eta)\,\mathrm{d}\xi\mathrm{d}\eta\bigg|^2\,\Bigg]\\
&< 8 k^{-1}\mathbb{E}\Bigg[\,\iint_{\Omega_{\text{high}}^1}\bigg|X(T,\xi,\eta)-X_N(\xi,\eta)\bigg|^2\,\mathrm{d}\xi\mathrm{d}\eta \Bigg]
+ 8\pi^2 h^{-2}\mathbb{E}\Bigg[\,\iint_{\Omega_{\text{high}}^2}\bigg|X(T,\xi,\eta)-X_N(\xi,\eta)\bigg|^2\,\mathrm{d}\xi\mathrm{d}\eta\,\Bigg]\\
&\leq 16k^{-1} \iint_{\Omega_{\text{high}}^1}\mathbb{E}\Big[\big|X_N(\xi,\eta)\big|^2 + \big|X(T,\xi,\eta)\big|^2\Big]\,\mathrm{d}\xi\mathrm{d}\eta + 16\pi^2 h^{-2} \iint_{\Omega_{\text{high}}^2} \mathbb{E}\Big[\big|X_N(\xi,\eta)\big|^2+ \big|X(T,\xi,\eta)\big|^2\Big]\,\mathrm{d}\xi\mathrm{d}\eta\\
&= 16 k^{-1} \iint_{\Omega_{\text{high}}^1}\mathbb{E}\Big[\big|X_N(\xi,\eta)\big|^2 \Big]\,\mathrm{d}\xi\mathrm{d}\eta + 16\pi^2 h^{-2} \iint_{\Omega_{\text{high}}^2} \mathbb{E}\Big[\big|X_N(\xi,\eta)\big|^2\Big]\,\mathrm{d}\xi\mathrm{d}\eta + f_1(k),
\end{align*}
where 
\begin{align*}
f_1(k) &= 16 k^{-1}\iint_{\Omega_{\text{high}}^1}\mathrm{e}^{-(1-\rho_x)\xi^2T - (1-\rho_y)\eta^2T}\,\mathrm{d}\xi\mathrm{d}\eta + 16\pi^2 h^{-2} \iint_{\Omega_{\text{high}}^2}\mathrm{e}^{-(1-\rho_x)\xi^2T - (1-\rho_y)\eta^2T}\,\mathrm{d}\xi\mathrm{d}\eta\\
&\leq C\cdot k^{p-1}\exp\big(-\beta Tk^{-p}\big) + C\cdot kh^{-2}\exp\big(-2\beta Tk^{-1}\big).
\end{align*}
Denote $d = \sin^2\frac{\xi h}{2} + \sin^2\frac{\eta h}{2}$, from \eqref{eq_2dXN2approx}, 
\begin{align*}
\mathbb{E}\big[\big|X_N\big|^2\big] &\leq |X_0|^2\bigg(1- \frac{4 \beta \big(\lambda d+\lambda^2 d^2\big)}{\big(1+2\lambda d\big)^2}\bigg)^N.
\end{align*}
For $\lambda \geq 1$, and $(\xi,\eta)\in\Omega_{\text{mid}}$,
\begin{align*}
\mathbb{E}\big[\big|X_N\big|^2\big] &\leq |X_0|^2\bigg(1- \frac{4 \beta \big(d+\lambda d^2\big)}{\big(1+2\lambda d\big)^2}\cdot \frac{T}{Nh^2}\bigg)^N< |X_0|^2\exp\Big(- \frac{4 \beta \big(d+\lambda d^2\big)T}{\big(1+2\lambda d\big)^2}\cdot h^{-2}\Big).
\end{align*}
In this case, as at least one of $\xi$ and $\eta$ belongs to $(k^{-p},k^{-1/2})$, we have
\[
d = a+b = \sin^2\frac{\xi h}{2} + \sin^2\frac{\eta h}{2} \geq \sin^2\frac{k^{-p}h}{2} = \frac{k^{-2p}h^2}{4} - \frac{k^{-4p}h^4}{48} + O(k^{-5p}h^5).
\]
\[
\mathbb{E}\big[\big|X_N\big|^2\big] < |X_0|^2\exp\Big(- \frac{4 \beta \big(d+\lambda d^2\big)T}{\big(1+2\lambda d\big)^2}\cdot h^{-2}\Big)< |X_0|^2\exp\big(-4\beta dTh^{-2} \big)< |X_0|^2\exp\big(-\beta Tk^{-2p} \big).
\]
Since $|\Omega_{\text{high}}^1|<4k^{-1}$, 
\[
16 k^{-1} \iint_{\Omega_{\text{high}}^1} \mathbb{E}\Big[\big|X_N(\xi,\eta)\big|^2\Big]\,\mathrm{d}\xi\mathrm{d}\eta < 64|X_0|^2 k^{-2}\exp\big(-\beta Tk^{-2p} \big).
\]
For $\lambda \geq 1$, and $(\xi,\eta)\in\Omega_{\text{high}}^2$, $d = \sin^2\frac{\xi h}{2} + \sin^2\frac{\eta h}{2} \in [∫\sin^2\frac{1}{2\sqrt{\lambda}},2]$,
\[
\max_{d} \bigg(1- \frac{4 \beta \big(\lambda d+\lambda^2 d^2\big)}{\big(1+2\lambda d\big)^2}\bigg) = 1 - \beta\min_{d}\bigg( 1 - \frac{1}{\big(1+2\lambda d\big)^2}\bigg)= 1 - \beta\bigg( 1 - \max_{d}\frac{1}{\big(1+2\lambda d\big)^2}\bigg).
\]
As
\begin{align*}
\max_{d}\frac{1}{\big(1+2\lambda d\big)^2} &= \frac{1}{\big(1+2\lambda d\big)^2}\bigg|_{d = \sin^2\frac{1}{2\sqrt{\lambda}}} = \frac{1}{\big(1+2\lambda \sin^2\frac{1}{2\sqrt{\lambda}}\big)^2}\leq \frac{1}{\big(1+2 \sin^2\frac{1}{2}\big)^2} < 0.5,
\end{align*}
we have 
\begin{align*}
1- \frac{4 \beta \big(\lambda d+\lambda^2 d^2\big)}{\big(1+2\lambda d\big)^2} \leq 1-\beta\Big(1-\frac{1}{\big(1+2\lambda \sin^2\frac{1}{2\sqrt{\lambda}}\big)^2}\Big) = 1-\beta+\frac{\beta}{\big(1+2\lambda \sin^2\frac{1}{2\sqrt{\lambda}}\big)^2}<1-\frac{1}{2}\beta.
\end{align*}
Denote $\theta_0\coloneqq 1-\frac{1}{2}\beta \in (0,1)$,
then
\[
\mathbb{E}\big[\big|X_N\big|^2\big] \leq |X_0|^2\bigg(1- \frac{4 \beta \big(\lambda d+\lambda^2 d^2\big)}{\big(1+2\lambda d\big)^2}\bigg)^N\leq |X_0|^2\cdot\theta_0^N.
\]  
So
\[
16\pi^2 h^{-2} \iint_{\Omega_{\text{high}}^2} \mathbb{E}\Big[\big|X_N(\xi,\eta)\big|^2\Big]\,\mathrm{d}\xi\mathrm{d}\eta < 64\pi^4|X_0|^2h^{-4}\theta_0^N.
\]

Hence
\begin{eqnarray}\label{eq_2dEmidhigh}
&&\!\!\!\mathbb{E}\Bigg[\,\bigg|\iint_{\Omega_{\text{high}}} X(T,\xi,\eta)-X_N(\xi,\eta)\,\mathrm{d}\xi\mathrm{d}\eta\bigg|^2\,\Bigg] \\
\nonumber
&\leq&\!\!\! \!C\,k^{p-1}\!\exp\big(\!-\beta Tk^{-p}\big) \!+\! C\, kh^{-2}\exp\big(\!-2\beta Tk^{-1}\!\big) \!+\! 64|X_0|^2 k^{-2}\!\exp\big(\!-\beta Tk^{-2p} \!\big) + 64\pi^4|X_0|^2h^{-4}\theta_0^N.
\end{eqnarray}

As the first three terms in \eqref{eq_2dEmidhigh} are terms of higher order than $h^4\theta^N$, we have, for $\lambda\geq 1$,
\[
\mathbb{E}\Bigg[\,\bigg|\iint_{\Omega_{\text{high}}} X(T,\xi,\eta)-X_N(\xi,\eta)\,\mathrm{d}\xi\mathrm{d}\eta\bigg|^2\,\Bigg] \leq C h^{-4}\theta_0^{N}.
\]
Letting $\theta = \sqrt{\theta_0}$ the result follows.
\end{proof}

\subsection{Convergence of the ADI scheme (proof of Theorem \ref{thm_ADIconvergence})}
Theorem \ref{thm_ADIconvergence} states that the error of the ADI method \eqref{eq_ADIdifference} has the same order as the implicit Milstein scheme \eqref{eq_2DimplicitMilstein}. We now give a proof as follows.

\begin{proof}[Theorem \ref{thm_ADIconvergence}]
Let
\[ 
X_{n+1} = C_n\,X_n,
\]
where
\[
C_n \equiv \exp\bigg(-\frac{1}{2}(1-\rho_x)\xi^2k -\frac{1}{2}(1-\rho_y)\eta^2k -\mathrm{i}\xi\sqrt{\rho_xk}Z_{n,x} - \mathrm{i}\eta\sqrt{\rho_yk}\widetilde{Z}_{n,y} + e_n\bigg),
\]
and $e_n$ is the logarithmic error between the numerical solution and the exact solution introduced during $[nk,(n+1)k]$.
From \eqref{eq_CnADI}, $C_n$ has the form
\[
C_n = \frac{1 -\mathrm{i}c_x\sqrt{\rho_xk}Z_{n,x} -\mathrm{i}c_y\sqrt{\rho_yk}\widetilde{Z}_{n,y} + b_x\rho_xk(Z_{n,x}^2-1) + b_y\rho_yk(\widetilde{Z}_{n,y}^2-1) + d\sqrt{\rho_x\rho_y}kZ_{n,x}\widetilde{Z}_{n,y}}{(1-a_xk)(1-a_yk)}.
\]
In the low wave region, the numerical solutions are close to the exact solutions. We get from Taylor expansion that
\begin{align*}
X_N - X(T) &= X(T)\cdot\bigg\{\frac{\mathrm{i}}{6}\sqrt{\rho_x}\xi^3h_x^2M_{T}^x + \frac{1}{24}(1-4\rho_x)\xi^4h_x^2T + \frac{\mathrm{i}}{6}\sqrt{\rho_y}\eta^3h_y^2\widetilde{M}_{T}^y + \frac{1}{24}(1-4\rho_y)\eta^4h_y^2T \\[3pt]
&\  + \mathrm{i} k\sqrt{k}\sum_{n=0}^{N-1}\widehat{\phi}_1(Z_{n,x},\widetilde{Z}_{n,y})+ k^2\sum_{n=0}^{N-1}\widehat{\phi}_2(Z_{n,x},\widetilde{Z}_{n,y}) + o(k,h_x^2,h_y^2)\bigg\}.
\end{align*}
In the high wave region, we have
\[
X_N \!=\! X_0\!\prod_{n=0}^{N-1}\!\frac{1 \!-\!\mathrm{i}c_x\sqrt{\rho_xk}Z_{n,x} \!-\!\mathrm{i}c_y\sqrt{\rho_yk}\widetilde{Z}_{n,y} \!+\! b_x\rho_xk(Z_{n,x}^2-1) \!+\! b_y\rho_yk(\widetilde{Z}_{n,y}^2-1) \!+\! d\sqrt{\rho_x\rho_y}kZ_{n,x}\widetilde{Z}_{n,y}}{(1-a_xk)(1-a_yk)}.
\]
Then
\begin{align*}
\lim_{N\rightarrow\infty}\mathbb{E}[X_N] &= X_0\exp\bigg(-\frac{1}{2}\big(\xi^2u+\eta^2v + \frac{1}{2}\xi^2\eta^2uvk
+2\xi\eta\frac{\sin\xi h_x\sin\eta h_y}{\xi h_x\eta h_y}\sqrt{\rho_x\rho_y}\rho_{xy}\big)\,T\bigg),\\
\lim_{N\rightarrow\infty}\mathbb{E}[|X_N|^2] 
&\leq |X_0|^2\exp\bigg(-\frac{1}{2}\xi^2uT\big(1-\rho_x+\frac{1}{4}\xi^2uk(1-2\rho_x(1+\rho_{xy}))\big)\\
&\ -\frac{1}{2}\eta^2vT\big(1\!-\!\rho_y\!+\!\frac{1}{4}\eta^2vk(1\!-\!2\rho_y(1\!+\!\rho_{xy}))\big) \!-\!\frac{1}{4}\xi^2\eta^2uvkT\big(1\!-\!\rho_{xy}(1\!+\!\rho_{xy} \!+\! 3\rho_{xy}^2)\big)\!\bigg),
\end{align*}
where
$u = \sin^2 \frac{h_x\xi}{2}/(\frac{h_x\xi}{2})^2,\ v = \sin^2 \frac{h_y\eta}{2}/(\frac{h_y\eta}{2})^2.$
By the same reasoning as for the implicit scheme~\eqref{eq_2DimplicitMilstein}, the integration over the high wave region is of higher order than $h_x^2$ and $h_y^2$ given condition \eqref{eq_2d_k/hcondition_2}. Then the inverse Fourier transform gives the result.
\end{proof}


\section{Numerical tests}\label{sec_2dNumerical}

In this section, we illustrate the stability and convergence results from the previous section by way of empirical tests.

Unless stated otherwise, we choose parameters $T=1,\ x_0= y_0 = 2,\ \mu_x = \mu_y = 0.0809,\ \rho_x = \rho_y = 0.2$, $\rho_{xy} = 0.45$.
For the computations, we truncate the domain to  $[-8,12]\times[-8,12]$, chosen large enough such that the effect of zero Dirichlet boundary conditions on the solution is negligible.

Figure~\ref{fig_densitysol} shows the 
numerical solution for one Brownian path, with $h_x = h_y = 2^{-4},\,k = 2^{-10}$.
Figure~\ref{fig_2ddensitydiff} plots the pointwise error between the Milstein-ADI approximation \eqref{eq_ADIdifference} and the analytic solution \eqref{eq_2dTheoreticalResult}.
\begin{figure}[H]
\centering
\subfigure[Solution at $t=1$]{
\includegraphics[width=3.1in, height=2.4in]{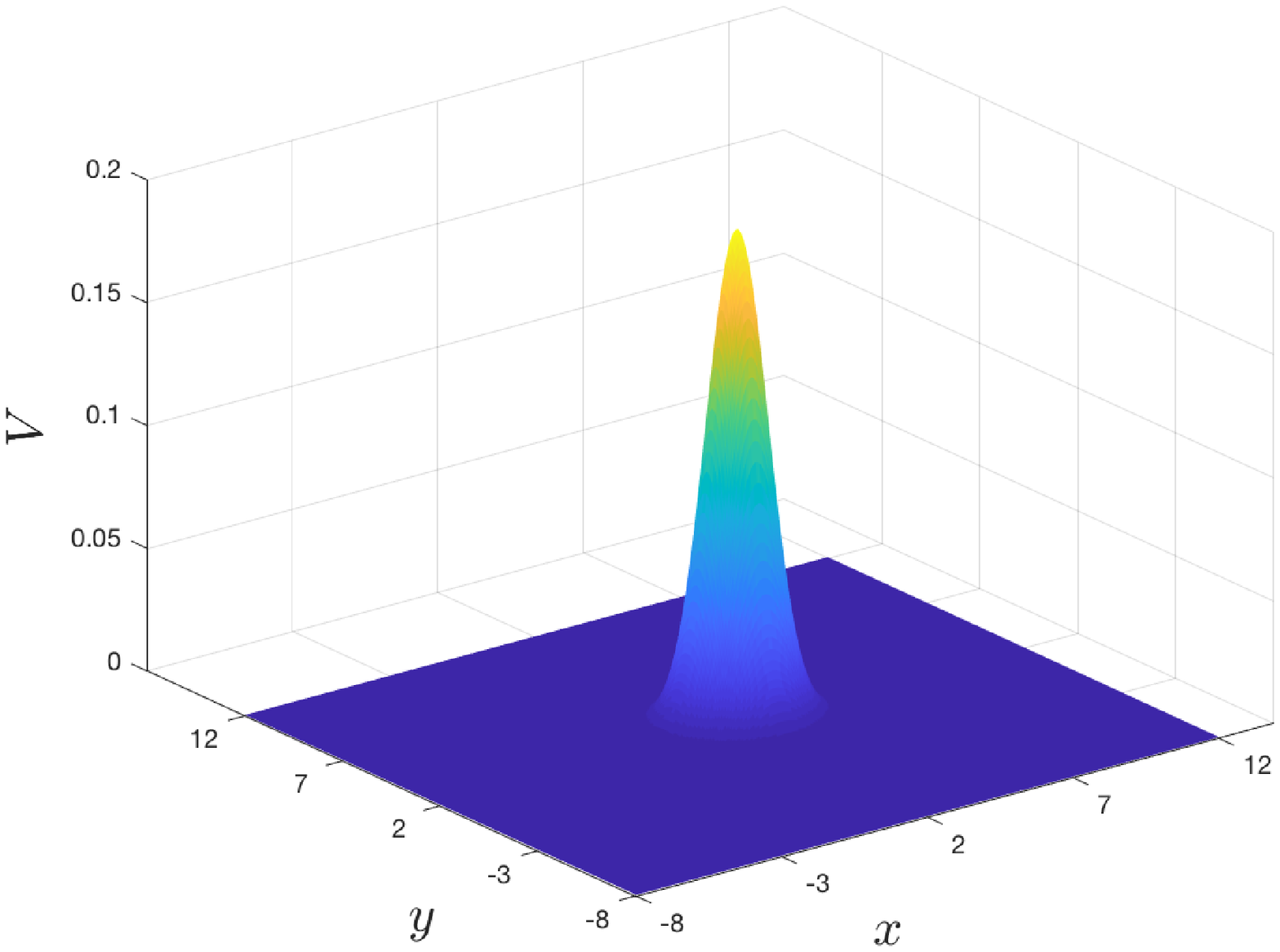}
\label{fig_densitysol}}
\subfigure[Error of the approximation: $V_{i,j}^n - v(t_n,x_i,y_j)$.]{
\includegraphics[width=3.1in, height=2.4in]{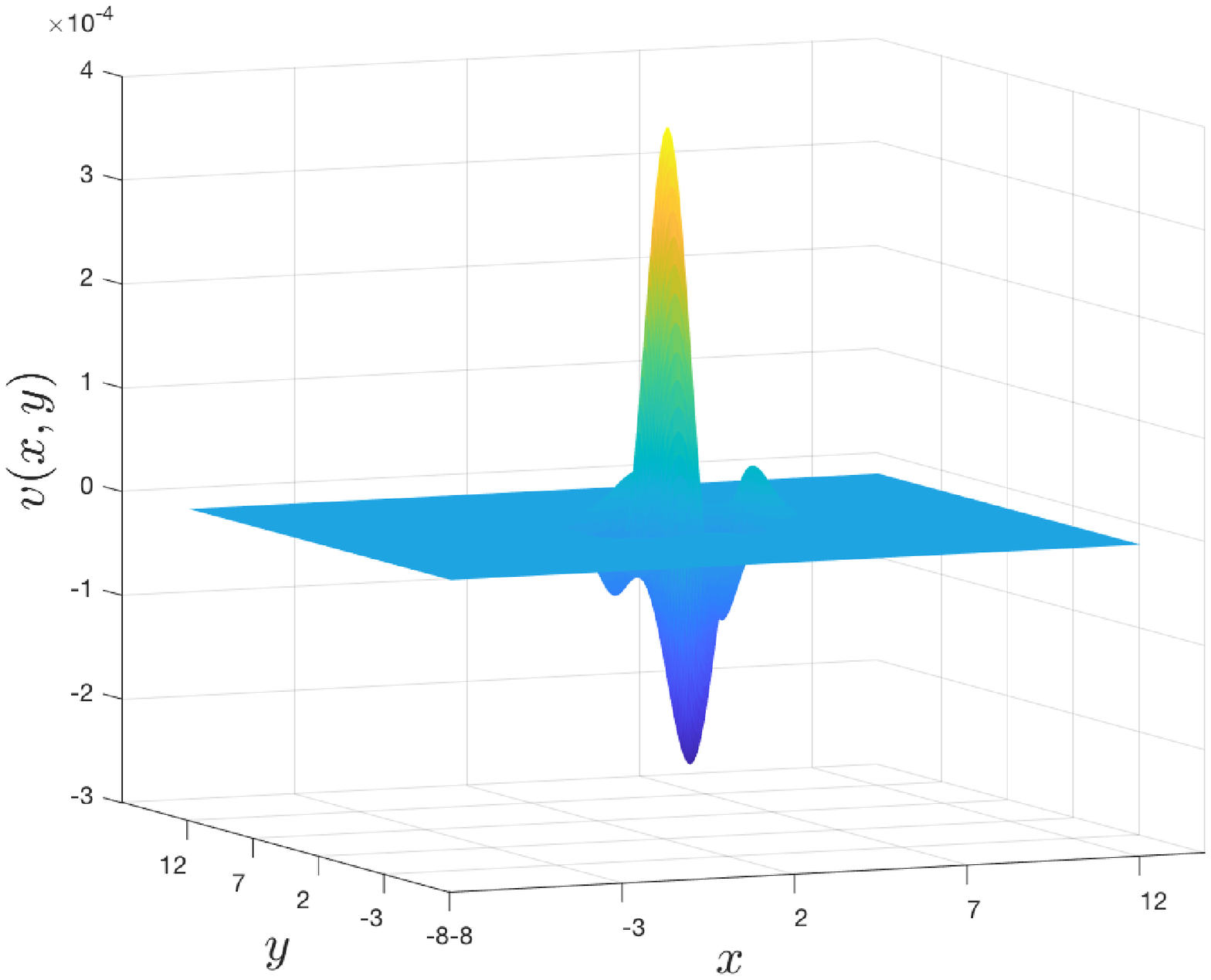}
\label{fig_2ddensitydiff}}
\caption{Numerical approximation and error for one Brownian path.}
\label{fig_density}
\end{figure}


Figure~\ref{fig_err} verifies the $L_2$-convergence order in $h$ and $k$ from \eqref{eq_2dmserror}. 
{
We approximate the error by 
\[
\Big(\sum_{i,j}h_xh_y E_L \big[|V_{i,j}^N - v(T,x_i,y_j)|^2\big]\Big)^{1/2} =
\Big(\sum_{i,j}h_xh_y \sum_{l=1}^{L} |V_{i,j}^N(M^{(l)}) - v(T,x_i,y_j; M^{(l)})|^2\Big)^{1/2},
\] 
where $M^{(l)}$ are independent Brownian motions and $E_L$ is the empirical mean with $L=100$ samples.
}

Here, Figure~\ref{fig_herr} shows the convergence in $h = h_x = h_y$ with $k =2^{-12}$ small enough, which demonstrates second order convergence in $h$. Figure~\ref{fig_kerr} shows the convergence in $k$ with $h = h_x = h_y=2^{-6}$ small enough to ensure sufficient accuracy of the spatial approximation. One can clearly observe first order convergence in $k$. 
\begin{figure}[h]
\centering
\subfigure[Convergence in $h$ with fixed $k=2^{-12}$.]{
\includegraphics[width=3.1in, height=2.3in]{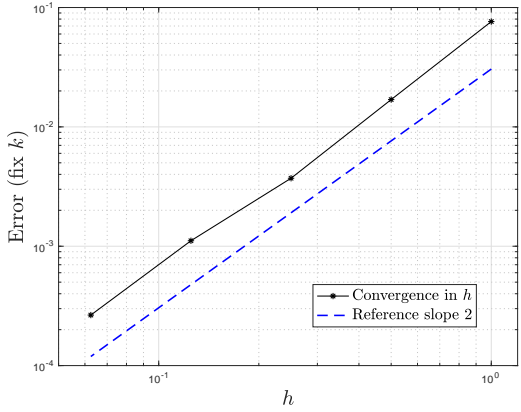}
\label{fig_herr}}
\subfigure[Convergence in $k$ with fixed $h=2^{-6}$.]{
\includegraphics[width=3.1in, height=2.3in]{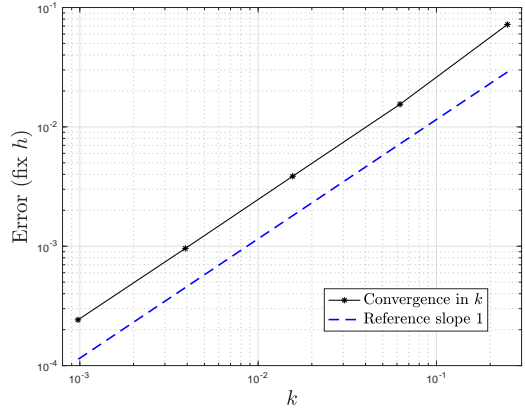}
\label{fig_kerr}
}
\caption{2-norm convergence with coarsest level $h = 1,\ k=1/4$ and finest level $h=2^{-5},\ k=2^{-10}$.}
\label{fig_err}
\end{figure}

In Figure~\ref{fig_rho_err}, we illustrate the dependence of the approximation error in the $L_2$-norm on the correlation parameters.
The error increases as a function of $\rho_x$ and $\rho_y$. The error for $\rho_x=\rho_y\le 0.3$ (see Figure~\ref{fig_rho1_err}) varies between roughly $10^{-3}$
and $3 \cdot 10^{-3}$, the error being smallest for $\rho_x=\rho_y= 0$ (the PDE case), and largest for large $\rho_x=\rho_y$ and $\rho_{xy}$ {between $0.1$ and $0.4$}.
For larger $\rho_x$ and  $\rho_y$, the error increases sharply. The stability region from Assumption  \ref{ass-corr} is marked in dark blue, which shows that stable results are obtained even outside the region where mean-square stability is proven. 
We found problems only for $\rho_x=\rho_y \ge 0.8$.
This discrepancy is partly due to the fact that Assumption \ref{ass-corr} is sufficient, but not necessary, as some of the estimates are not sharp.
Figure~\ref{fig_rho2_err} shows a similar behaviour when varying $\rho_x$ and $\rho_y$ independently for fixed $\rho_{xy}$.

\begin{figure}[H]
\centering
\subfigure[Error as function of $\rho_x=\rho_y$, and $\rho_{xy}$.]{
\includegraphics[width=3.1in, height=2.3in]{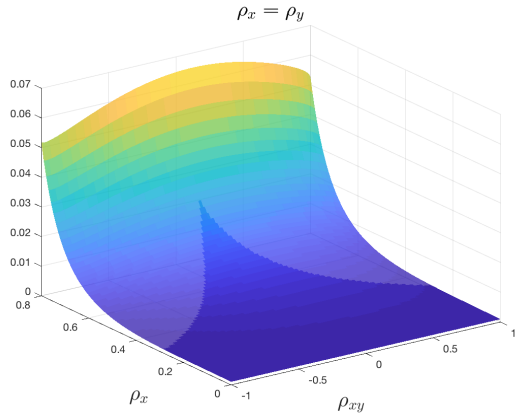}
\label{fig_rho1_err}
}
\subfigure[Error as function of $\rho_x$ and $\rho_y$ for fixed $\rho_{xy}$.]{
\includegraphics[width=3.1in, height=2.3in]{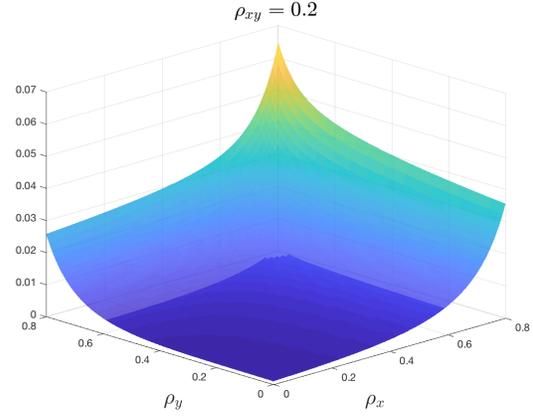}
\label{fig_rho2_err}}
\caption{$L_2$ error in space as function of correlation parameters for fixed $h_x=h_y=2^{-3}$ and $k=2^{-9}$, for a fixed path.
The dark blue areas correspond to the stability region from Assumption \ref{ass-corr}.}
\label{fig_rho_err}
\end{figure}

Figure~\ref{fig_unstable} shows the singular behaviour of the solution for large $k$ and small $h$, as predicted by Theorem~\ref{thm_mean-square}.
Figure~\ref{fig_divergetest_err} investigates the behaviour of the error in this regime further, with $h_x=h_y=h$ in Figure~\ref{fig_divtest_err1}, and $h_y=2^{-1}$ fixed, $h_x=h$ in Figure~\ref{fig_divtest_err2}.  We calculate the $L_2$ error in space, and compare different scenarios. The top black line shows the error with $k=2^{-2}$ fixed, and $\rho_x=\rho_y=0.6,\ \rho_{xy}=0.1$. We can see that as $h$ goes to zero, the error diverges with rate $h^{-1/2}$ (choosing $h_y=2^{-1}$ fixed enables more refinements in $h_x$ to show the asymptotic behaviour better). The blue line second from top plots the error with $k=2^{-4}$ fixed instead. 
Note that in this case the error will eventually diverge for $h$ going zero, but this is not visible yet for this level of $h$. The next red line plots the error for $\rho_x = \rho_y =0$, with $k=2^{-2}$ fixed. Then the SPDE~\eqref{eq_SPDE} becomes a PDE and divergence does not appear. Finally, the bottom blue dotted line plots the error for the SPDE~\eqref{eq_SPDE} with initial condition
\begin{equation}\label{eq_smoothini}
v(0,x,y) = \frac{1}{2\pi\sqrt{(1-\rho_x)(1-\rho_y)}}\,\exp\Big(-\frac{(x-x_0-\mu_x)^2 }{2(1-\rho_x)}-\frac{(y-y_0-\mu_y )^2}{2(1-\rho_y)}\Big).
\end{equation}
{
For this smooth initial condition, the solution does not diverge for large $k$ and small $h$.}
Hence, this verifies that the divergence is a result of the interplay of singular data and stochastic terms only,  as shown in Corollary \ref{cor_L2spaceconvergence}.
We emphasise that the instability is so mild that it is only visible in artificial numerical tests, while for reasonably small $k$, in particular for $k\sim h^2$ as would be chosen in practice, no instabilities occur.

\begin{figure}[H]
\centering
\includegraphics[width=3.1in, height=2.3in]{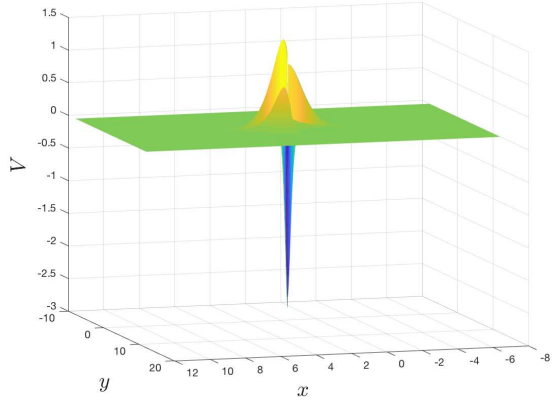}
\caption{Unstable solution with $k=2^{-2}$ and $h_x=h_y=2^{-9}$, $\rho_x=\rho_y=0.6,\ \rho_{xy}=0.1$.}
\label{fig_unstable}
\end{figure}
\begin{figure}[H]
\centering
\subfigure[Error with $h_x=h_y=h\rightarrow 0$.]{
\includegraphics[width=3.1in, height=2.3in]{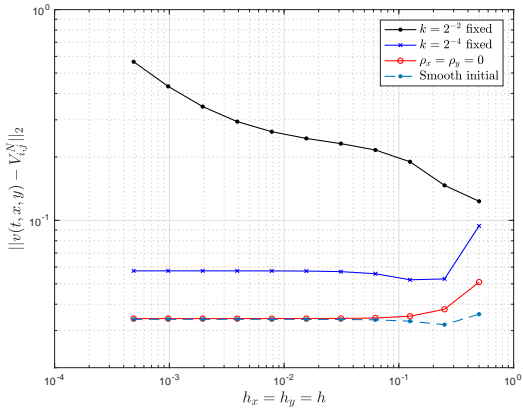}
\label{fig_divtest_err1}
}
\subfigure[Error with $h_y=2^{-1}$ fixed, and $h_x\rightarrow 0$.]{
\includegraphics[width=3.1in, height=2.3in]{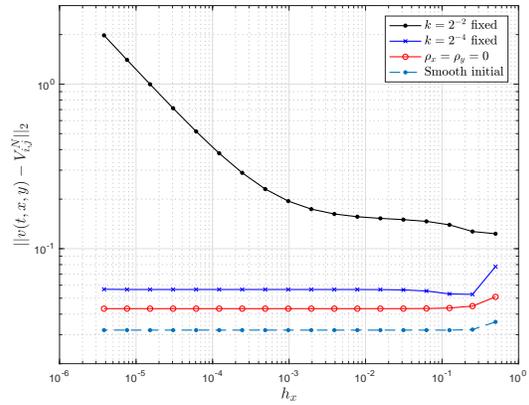}
\label{fig_divtest_err2}}
\caption{$L_2$ error in space, with fixed path and $k$, letting $h\rightarrow 0$.}
\label{fig_divergetest_err}
\end{figure}

{
\section{An extended scheme and tests for a more general SPDE}
\label{sec:general}
}
To approximate the general Zakai SPDE \eqref{eq_zakai},
\[
\mathrm{d}v(t,x) = \bigg(\frac{1}{2}\sum_{i,j=1}^d\frac{\partial^2}{\partial x_i\partial x_j}\big[a_{ij}(x)v(t,x)\big] - \sum_{i=1}^d\frac{\partial}{\partial x_i}\big[b_i(x)v(t,x)\big]\bigg)\,\mathrm{d}t- \nabla\big[\gamma(x)v(t,x)\big]\,\mathrm{d}M_t,
\]
by the Milstein scheme, we approximate in the last term
\[
v(s,x) \approx v(t,x) -\nabla\big[\gamma(x)v(t,x)\big](M_s - M_t),
\]
then
\begin{align*}
 -\int_t^{t+k} \nabla\big[\gamma(x)v(s,x)\big]\,\mathrm{d}M_s &\approx -\int_t^{t+k} \nabla\Big[\gamma(x) \Big( v(t,x) -\nabla\big[\gamma(x)v(t,x)\big](M_s - M_t) \Big) \Big]\,\mathrm{d}M_s\\
&\hspace{-1 cm} = -\nabla\big[\gamma(x)v(t,x)\big]\Delta M_t + \nabla\Big[\gamma(x) \nabla\big[\gamma(x)v(t,x)\big]\Big]\int_t^{t+k}(M_s - M_t)\,\mathrm{d}M_s.
\end{align*}
The corresponding ADI implicit Milstein scheme is
\begin{align*}
&\quad \prod_{i=1}^d \bigg(I + \frac{k}{2h_i} D_i b_i(X) - \frac{1}{2} \frac{k}{h_i^2}D_{ii}a_{ii}(X)\bigg)V^{n+1}\\
& = \bigg\{I + \frac{1}{2} \sum_{i\neq j} \frac{k}{4h_ih_j}D_{i}D_j a_{ij}(X) - \sum_{l=1}^m \Delta M_l^n\sum_{i=1}^d \frac{1}{2h_i} D_i \gamma_{i,l}(X) \\
&\quad + \sum_{l=1}^m\sum_{p=1}^m \Big(\int_{nk}^{(n+1)k}\big(M_p(s)-M_p(t)\big)\,\mathrm{d}M_l(s)\Big) \sum_{i=1}^d \Big(\frac{1}{2h_i}D_i \gamma_{il}(X)\sum_{j=1}^d \frac{1}{2h_j}D_j\gamma_{jp}(X)\Big)\bigg\}V^n.
\end{align*}
Here, $\{D_i\}_{1\le i\le d}$ are first order difference operators, and $\{D_{ij}\}_{1\le i,j\le d}$ are second order difference operators, $X$ is the vector of mesh points ordered the same way as $V$, and, by slight abuse of notation, we denote by $a(X), b(X), \gamma(X)$ the diagonal matrices such that 
each element of the diagonal corresponds to the function evaluated at the corresponding mesh point.


Notice the presence of an iterated It\^{o} integral $\int_{t}^{t+k} (M_p(s) - M_s(t)) \,\mathrm{d}M_l(s)$, called L\'{e}vy area,
as is common in multi-dimensional Milstein schemes.
It has been proved in \cite{clark1980maximum,muller2002strong} that there is no way to achieve a better order of strong convergence than for the Euler scheme by using solely the discrete increments of the driving Brownian motions.
An efficient algorithm for the approximate simulation of the L\'{e}vy area has been proposed in
\cite{wiktorsson2001joint}, building on earlier work in \cite{kloeden1992approximation, gaines1994random}
and based on an approximation of the distribution of the tail-sum in a truncated infinite series representation derived from the characteristic functions of these integrals.
The best complexity of sampling a single path to obtain strong error $\varepsilon$ is $\varepsilon^{-3/2}$, and the algorithm fairly complex.




Instead, to estimate this term in the time interval $[t,t+k]$, we further divide the interval into $O(k^{-1})$ steps and perform a simple Euler approximation.
We find numerically that this still leads to first order convergence in time, and second order convergence in space. To balance the leading order error, the optimal choice is $O(k) = O(h^2) = O(\varepsilon)$. Therefore, the estimate of the L{\'e}vy area
in each time-step increases the computation time by $O(k^{-1}) = O(\varepsilon^{-1})$  for one step, whereas the matrix calculation for each time step is also $O(\varepsilon^{-1})$, and hence the order of total complexity does not change.

Moreover, the path simulation including the L{\'e}vy areas can be performed separately beforehand using vectorisation, leading to further speed-up.

%
%

Now we apply this method to an SPDE from \cite{hambly2017stochastic},
\begin{equation}\label{eq_stospde}
\begin{aligned}
\mathrm{d}u &= \bigg[\kappa_1 u - \Big(r_1 - \frac{1}{2}y - \xi_1\rho_3\rho_{1,1}\rho_{2,1}\Big) \frac{\partial u}{\partial x} - \Big( \kappa_1(\theta_1-y) - \xi_1^2 \Big)\frac{\partial u}{\partial y} + \frac{1}{2}y \frac{\partial^2 u}{\partial x^2} \\
&\quad + \xi_1\rho_3\rho_{1,1}\rho_{2,1}y\frac{\partial^2 u}{\partial x\partial y} + \frac{\xi_1^2}{2}y\frac{\partial^2 u}{\partial y^2} \bigg]\,\mathrm{d}t - \rho_{1,1}\sqrt{y}\frac{\partial u}{\partial x}\,\mathrm{d}W_t - \xi_1\rho_{2,1}\frac{\partial}{\partial y}(\sqrt{y}u)\,\mathrm{d}B_t,
\end{aligned}
\end{equation}
with Dirac initial $u(0,x,y) = \delta(x-x_0)\delta(y-y_0)$. This SPDE models the limit empirical measure of a large portfolio of defaultable assets in which the asset value processes are modelled by Heston-type stochastic volatility models with common and idiosyncratic factors in both the asset values and the variances, and default is triggered by hitting a lower boundary.

Similar to before, we implement the SPDE \eqref{eq_stospde} with an Milstein ADI scheme as follows:
\begin{align*}
&\quad \bigg(I + \frac{k}{2h_x}A_{1,x}D_x - \frac{k}{2h_x^2}YD_{xx}\bigg) \bigg(I + \frac{k}{2h_y}A_{1,y}D_y  - \frac{k}{2h_y^2}\xi_1^2YD_{yy}\bigg)U^{n+1}\\
&= \bigg( (1+\kappa_1\,k)I - \frac{k}{8h_x^2}\rho_{1,1}^2YD_{x}^2 - \frac{k}{4h_x}\xi_1\rho_{1,1}\rho_{2,1}\rho_3D_x
- \frac{\sqrt{k}Z_{n,x}}{2h_x}\rho_{1,1}\sqrt{Y}D_x + \frac{kZ_{n,x}^2}{8h_x^2}\rho_{1,1}^2YD_{x}^2\\
&\quad + \frac{k}{4h_x}\xi_1\rho_{1,1}\rho_{2,1}Z_{n,x}\widetilde{Z}_{n,y}\Big(D_x +\frac{1}{h_y}Y D_{xy}\Big) + \frac{1}{4h_x}\xi_1\rho_{2,1}\rho_{1,1} \Big(\int_t^{t+k}(W_s-W_t)\,\mathrm{d}B_s\Big) D_x\bigg)U^n\\
&\quad - \frac{\sqrt{k}\widetilde{Z}_{n,y}}{2h_y}\xi_1\rho_{2,1}D_y(\sqrt{Y}U^{n}) + \frac{k(\widetilde{Z}^2_{n,y}-1)}{8h_y^2}\xi_1^2\rho_{2,1}^2D_y\Big(\sqrt{Y}\big(D_y(\sqrt{Y}U^{n})\big)\Big).
\end{align*}
The notation for $Y$ follows the same principle as above for $X$.

We choose parameters $T = 1,\, x_0 = 2,\, y_0 = 1.4,\, r = 0.05,\, \xi = 0.5,\, \theta = 0.4,\, \kappa = 2,\, \rho_{1,1}= 0.3$, $\rho_{2,1}=0.2$, $\rho_{3}=0.5$. We truncate the domain to $[-3,7]\times[0,1.5]$ sufficiently large in this setting.

Figure \ref{fig_stodensity} shows the density for a single Brownian path, with $k=2^{-8}$, $h_x = 5/16$, and $h_y = 1/80$.

Figure~\ref{fig_stokCostk} compares the computational cost under different time-stepping schemes: the Milstein scheme, a ``modified'' Milstein scheme, and the Euler scheme.
Here, for the Milstein scheme we approximate the L{\'e}vy area by sub-timestepping as explained above, while
in the ``modified'' Milstein scheme we drop $\int_t^{t+k} (W_s-W_t)\,\mathrm{d}B_s$ but keep the one-dimensional iterated integrals as they are known analytically.
We expect that the latter will lead to a worse convergence in time (for non-zero $\xi_1, \rho_{2,1}, \rho_{1,1}$), which is verified in Figure~\ref{fig_stoerrk}.

In Figure~\ref{fig_stokCostk}, 
from a coarsest mesh with $h_x = 0.625$, $h_y = 0.025$, and $k = 0.25$, we keep decreasing the time-step $k$ by a factor of $4$, and the spatial mesh width by a factor of $2$.
This shows that the cost, measured by time elapsed in simulating one path, increases by a factor of $16$, demonstrating that these three schemes result in the same order of complexity.


\begin{figure}[H]
\centering
\subfigure[Sample density with $k=2^{-8}$, $h_x = 5/16,\ h_y=1/80$.]{
\includegraphics[width=3.1in, height=2.2in]{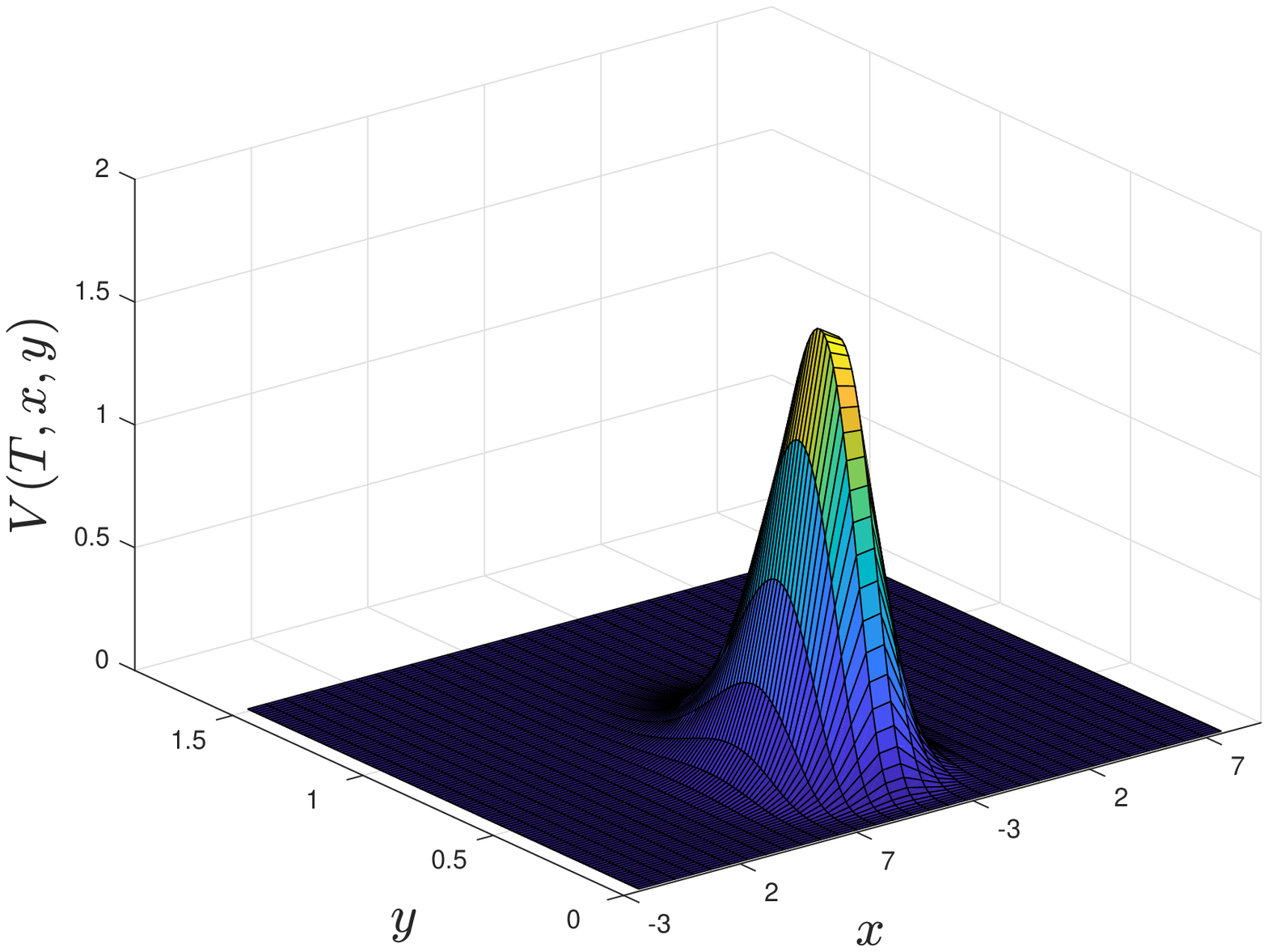}
\label{fig_stodensity}}
\subfigure[Cost in $k$ with $h_x = \frac{5}{4}k^{1/2}$ and $h_y = \frac{1}{20}k^{1/2}$.]{
\includegraphics[width=3.1in, height=2.2in]{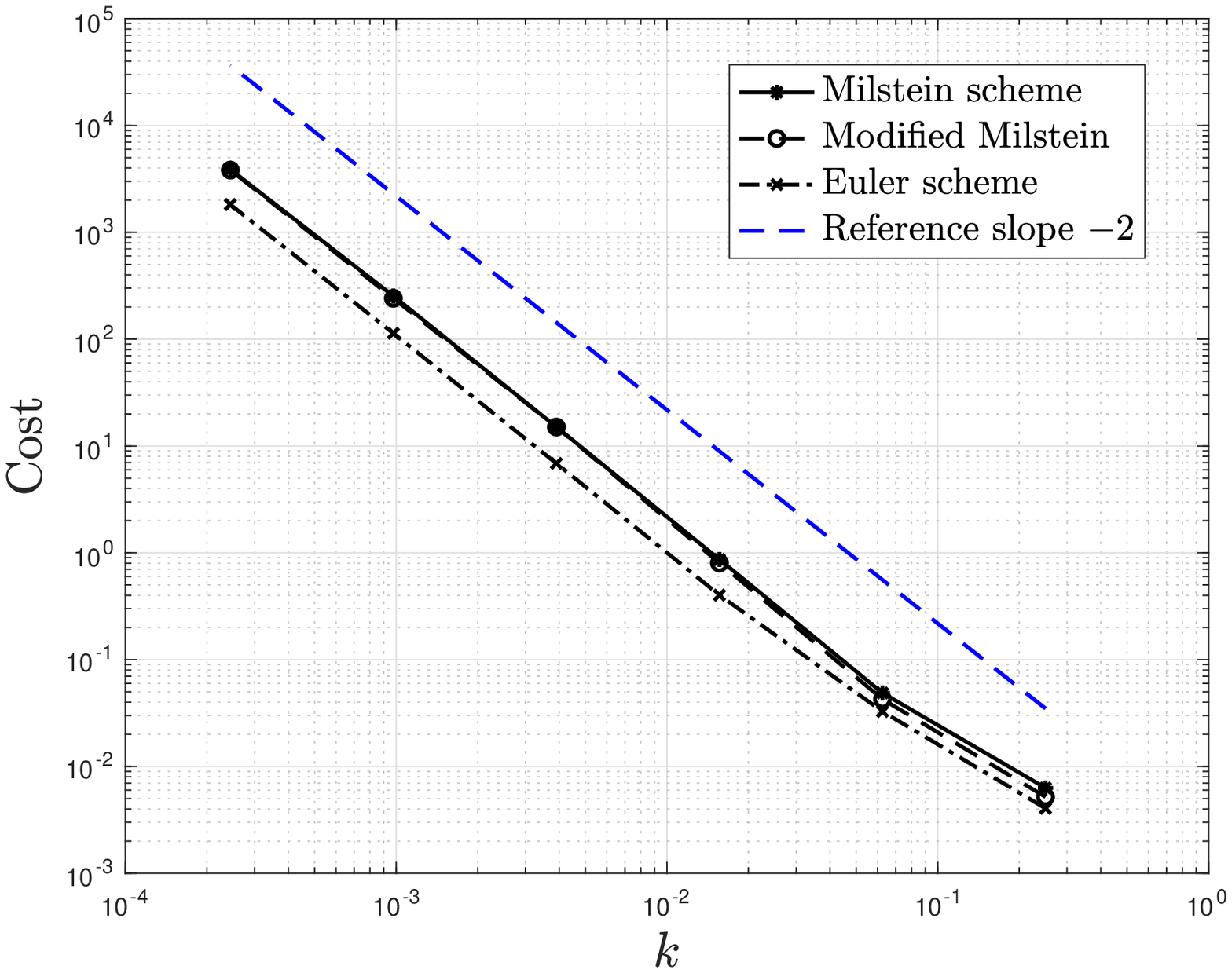}
\label{fig_stokCostk}
}
\caption{Single path realisation of the density and associated numerical cost (CPU time in sec).}
\label{fig_stoplots1}
\end{figure}


Figure~\ref{fig_stoerr} verifies the $L_2$ convergence order in $h$ and $k$. 
In absence of an exact solution, we compute a proxy to the error in $h_x,\ h_y$ by
\[
\Big(\sum_{i,j}h_xh_y\mathbb{E}\big[|V_{2i,2j}^N(k,h_x/2,h_y/2) - V_{i,j}^N(k,h_x,h_y)|^2\big]\Big)^{1/2},
\] 
where $V_{2i,2j}^N(k,h_x/2,h_y/2)$ is the numerical solution to $v(T,ih_x,jh_y)$ with mesh size $h_x/2$ and $h_y/2$, and $V_{i,j}^N(k,h_x,h_y)$ uses a coarse mesh $h_x$ and $h_y$. Both share the same Brownian path and same time step $k$, thus the univariate 
error in $k$ cancels and we should see the correct convergence order in $h_x$ and $h_y$.
Here, Figure~\ref{fig_stoerrh} shows the convergence in $h = h_x = h_y$ with $k =2^{-4}$ fixed, which demonstrates second order convergence in $h$. 

Similarly, we study the error in $k$ in terms of 
\[
\Big(\sum_{i,j}h_xh_y\mathbb{E}\big[|V_{i,j}^N(k,h_x,h_y) - V_{i,j}^{2N}(k/2,h_x,h_y)|^2\big]\Big)^{1/2},
\]
using now the difference between two solutions with same mesh size, same Brownian path, but different time-steps.
Figure~\ref{fig_stoerrk} shows the convergence in $k$ with fixed $h_x = 5/8,\ h_y = 1/40$, under the Milstein scheme, ``modified'' Milstein scheme, and Euler scheme. The timestep $k$ decreases by a factor of $4$ from one level to the next. We also plot two blue dashed lines with slope $1/2$ and $1$ as reference. One can clearly observe first order convergence in $k$ for the Milstein scheme, and half order convergence in $k$ for the Euler scheme. As for the ``modified'' Milstein scheme, although it appears to converge with first order on coarse levels (due to dominance of the terms which converge with first order for this level of accuracy), the asymptotic order is seen to be lower. 

\begin{figure}[H]
\centering
\subfigure[Convergence in $h$ with fixed $k$.]{
\includegraphics[width=3.1in, height=2.2in]{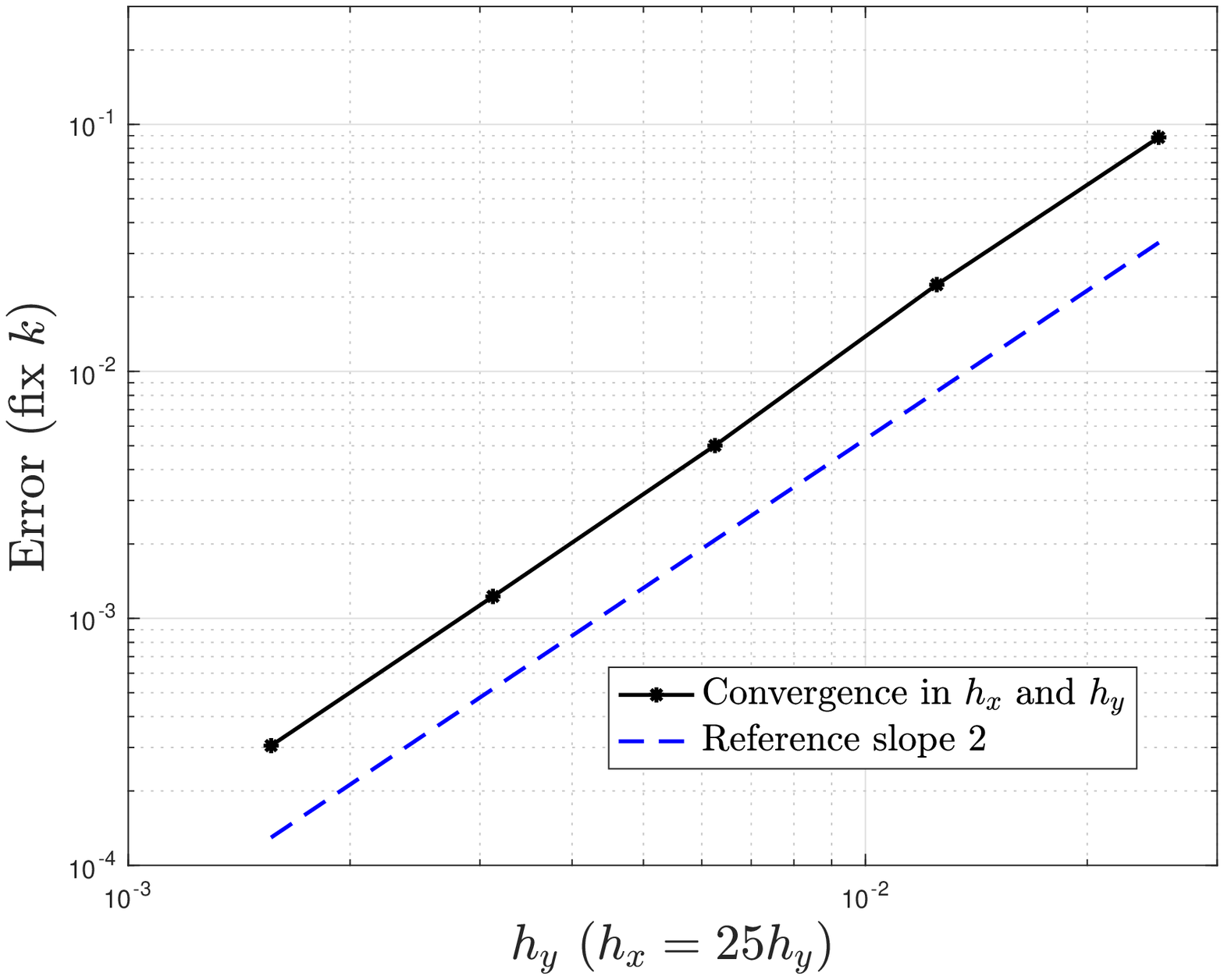}
\label{fig_stoerrh}}
\subfigure[Convergence in $k$ with fixed $h$.]{
\includegraphics[width=3.1in, height=2.2in]{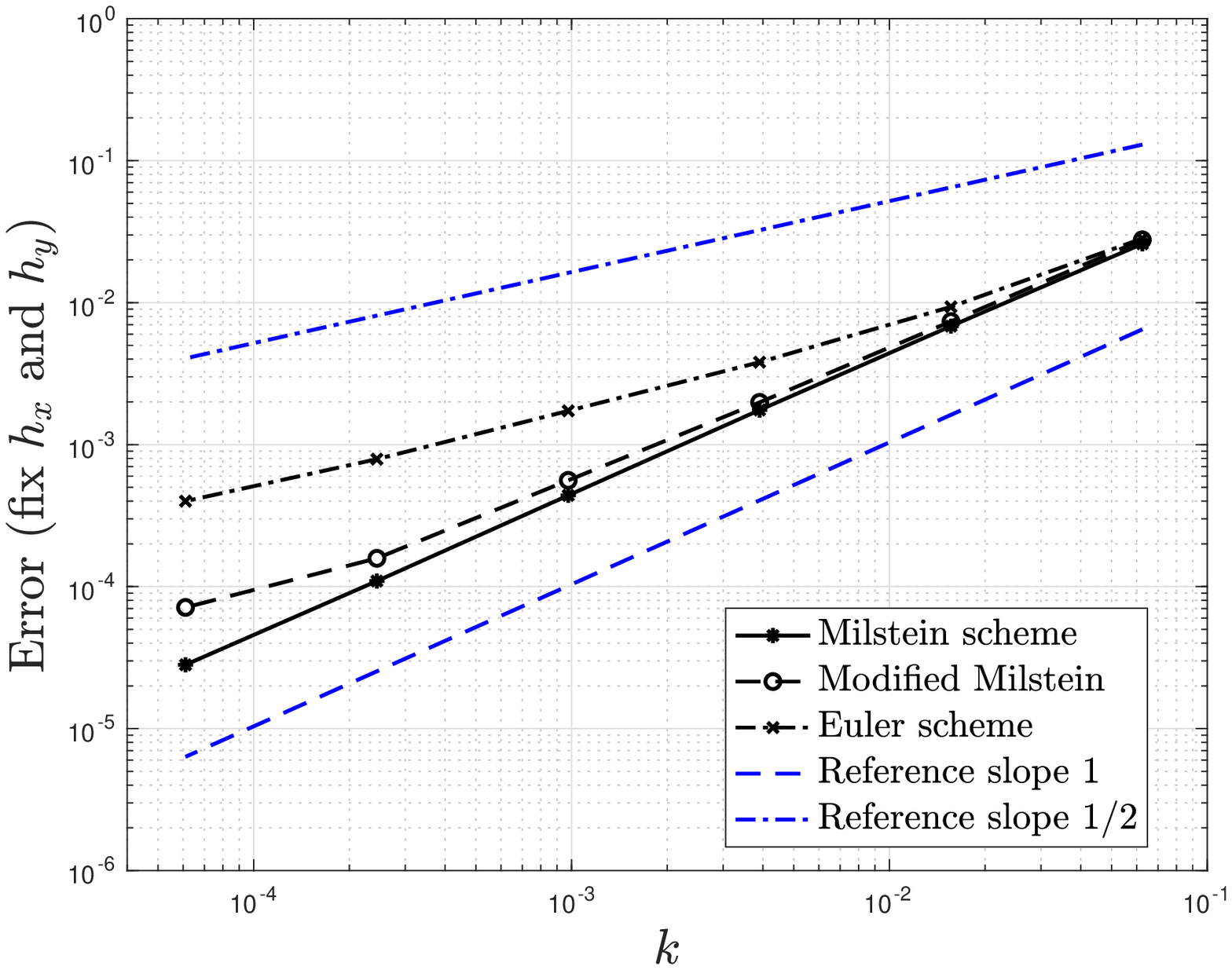}
\label{fig_stoerrk}
}
\caption{2-norm convergence test of $(h_x,h_y)$ and $k$.}
\label{fig_stoerr}
\end{figure}

\section{Conclusions}\label{sec_Conclusion}
We studied a two-dimensional parabolic SPDE arising from a filtering problem. We proved mean-square stability and pointwise as well as $L_2$-convergence for a semi-implicit Milstein discretisation scheme. To reduce the complexity, we also implemented an ADI version of the scheme, and provided corresponding convergence results.

Further research is needed to analyse almost sure convergence, which is of interest for filtering applications and does not follow directly from our analysis.

Another open question is a complete analysis of the numerical approximation of initial-boundary value problems (as opposed to problems posed on $\mathbb{R}^d$) for the considered SPDE, when the regularity at the boundary is lost. For example, for the 1-d SPDE with constant coefficients on the half-line,
\begin{align*}
\mathrm{d}v &= -\mu\frac{\partial v}{\partial x}\,\mathrm{d}t + \frac{1}{2}\frac{\partial^2 v}{\partial x^2}\,\mathrm{d}t - \sqrt{\rho}\frac{\partial v}{\partial x}\,\mathrm{d}M_t,\qquad (t,x)\in(0,T)\times\mathbb{R}^+,\\
v(t,0)&=0,
\end{align*}
with initial condition $v(0,\cdot)\in H^1$, the second derivative can be unbounded, i.e., $v(t,\cdot)\notin H^2$. This and more general forms have been studied in \cite{krylov1981stochastic, bush2011stochastic}. In such cases, the assumptions on Galerkin approximations in papers previously mentioned such as in \cite{barth2012milstein, barth2013multilevel} are not established in the literature, hence a new approach for the numerical analysis is to be developed.

\bibliography{references}
\bibliographystyle{alpha}



\end{document}